\documentclass[11pt]{article}
\usepackage{amsmath, amsthm, amssymb, mathrsfs}
\usepackage{geometry}
\newtheorem{theorem}{Theorem}[section]
\newtheorem{lemma}[theorem]{Lemma}
\newtheorem{cor}[theorem]{Corollary}
\newtheorem{rem}[theorem]{Remark}
\newtheorem{proposition}[theorem]{Proposition}

\numberwithin{equation}{section}

\usepackage{hyperref}
\usepackage{dsfont}

\usepackage[
    backend=biber,     
    style=numeric,
    maxnames=99,
    giveninits=true,   
    doi=false,         
    isbn=false,        
    url=false,          
    eprint=true
]{biblatex}

\renewbibmacro{in:}{%
  \ifentrytype{article}{}{\printtext{\bibstring{in}\intitlepunct}}}

\DeclareFieldFormat*{title}{#1}

\usepackage{xcolor,todonotes}

\newcommand{\R}{\mathbb{R}}
\newcommand{\N}{\mathbb{N}}
\newcommand{\E}{\mathbb{E}}
\renewcommand{\P}{\mathbb{P}}
\newcommand{\ind}{\mathds{1}}

\newcommand{\eps}{\varepsilon}
\newcommand{\Prob}[1]{\mathbb P\{#1\}}

\newcommand{\Var}{\operatorname{Var}}
\newcommand{\dd}{{\mathrm d}}

\makeatletter
\def\namedlabel#1#2{\begingroup
	#2%
	\def\@currentlabel{#2}%
	\phantomsection\label{#1}\endgroup
}
\makeatother
\allowdisplaybreaks[4]
\title{Dickman Approximation of Randomly Weighted Sums via Stein's Method}
\author{Chinmoy Bhattacharjee \footnote{\raggedright Department of Mathematics, University of Hamburg, Germany;~\href{mailto:chinmoy.bhattacharjee@uni-hamburg.de}{\texttt{chinmoy.bhattacharjee@uni-hamburg.de}}.}}
\date{\today}

\begin{document}
	
	\maketitle
	\begin{abstract}
We introduce a unified framework via Stein's method for bounding the Kolmogorov distance between the generalized Dickman distributions and the distribution of randomly weighted sums of non-negative integer-valued random variables that are conditionally independent given the weights. By utilizing size-bias couplings and a decomposition of the solution to the corresponding Stein equation into distinct bounded and non-increasing components, our approach yields non-asymptotic error bounds governed only by the quality of a discrete-to-continuous coupling. A similar bound is also obtained for the smooth Wasserstein-2 distance. We apply our abstract results to establish concrete, and in some cases optimal, rates of convergence across diverse probabilistic models. These applications range from settings with independent weights, such as randomly weighted sums of log-primes, to models with intricate dependency structures, including geometric sums over the spectra of the Circular Unitary Ensemble (CUE) and random weights generated by independent increments.
	\end{abstract}

\medskip

\noindent
{\em Key words:} Dickman distribution, Stein’s method, Kolmogorov distance, weighted random sums, primes, circular unitary ensemble.

\medskip

\noindent
{\em AMS Subject Classification:}  60F05 (Primary) 60G50 (Secondary).

	%
%
	\section{Introduction and main results}\label{sec:mainres}

The generalized Dickman distribution with mean $\theta>0$, denoted by $\mathcal{D}_\theta$, traces its origins to the work of Dickman \cite{Dickman} on smooth numbers, where it describes the asymptotic density of integers having no large prime factors. Since its number-theoretic genesis, it has been shown to arise naturally as the limiting distribution for a wide variety of logarithmic combinatorial structures \cite{ABT,BN11,Pi16b, Pinsky16a}. Classical examples include the scaled size of the largest prime factor of a uniformly chosen integer \cite{Bil,KP76,Ver} and the scaled length of the longest cycle in a random permutation \cite{King,VS77}.

A particularly elegant probabilistic interpretation of the generalized Dickman distributions is given by its characterization as a distributional fixed point. Concretely, $\mathcal{D}_\theta$ is uniquely defined as the non-negative fixed point of the distributional transformation $W \to W^*$ given by
\begin{equation*}\label{def:Dickman}
	W \stackrel{d}{=} U^{1/\theta}(W+1),
\end{equation*}
where $U \sim\mathbb{U}[0,1]$ is a uniform random variable on the interval $[0,1]$  independent of $W$ with $\stackrel{d}{=}$ denoting equality in distribution. Throughout this paper, $D_\theta \sim \mathcal{D}_\theta$ denotes a Dickman distributed random variable with mean $\theta$, and we will simply write $D \equiv D_1$ for a standard Dickman distributed random variable. 

Motivated by its appearance across such varied frameworks, there has been significant recent interest in understanding the general domain of attraction for these generalized Dickman distributions. Many random combinatorial or number theoretic objects that weakly converge to these distributions can often be expressed as a weighted (possibly random) sum of independent random variables. Let $(X_k)_{k \in \N}$ be a sequence of non-negative integer-valued random variables, henceforth called \textit{counts}, and $(W_{n,k})_{k \in [n]}$ be a triangular array of non-negative real-valued random \textit{weights}, where we denote $[n] = \{1,\hdots, n\}$. We study the sum
\begin{equation}\label{eq:S_n}
	S_n = \sum_{k=1}^n W_{n,k} X_k.
\end{equation}

We quantify the Dickman convergence of such sums by establishing non-asymptotic bounds in the Kolmogorov distance -- arguably the most natural metric for real-valued random variables.  For two non-negative random variables $X$ and $Y$, the Kolmogorov distance $d_K$ between (the distributions of) $X$ and $Y$ is defined as
\begin{equation*}\label{eq:dK}
	d_{K}(X,Y) =\sup_{t \ge 0} |\P\, (X \le x)-\P\, (Y \le t)|.
\end{equation*}

Dickman convergence for certain specific deterministically weighted sums has been previously studied in the literature, with optimal bounds in the Kolmogorov distance recently obtained for certain weighted Bernoulli sums in \cite{BS25}. When weights are truly random, \cite{Pi16b} provides certain conditions for weak convergence of independently weighted Bernoulli sums to generalized Dickman distributions. However, the existing literature (e.g., \cite{BG19, BS25}) for quantitative results in the case of random weights generally restricts convergence bounds to the smoother Wasserstein-2 distance $d_{1,1}$, which takes the supremum over twice continuously differentiable test functions with bounded first and second derivatives. Furthermore, to the best of our knowledge, no quantitative bounds exist for settings exhibiting genuine dependence among the weights and the counts. This work bridges this gap.


Motivated by the natural question of identifying the general domain of attraction for generalized Dickman distributions, we provide non-asymptotic bounds for Dickman convergence applicable to a broad class of randomly weighted sums. In particular, we significantly generalize the framework of \cite{BS25} to encompass random sums of the form \eqref{eq:S_n}. Existing works typically restrict their scope to specific classes of count variables (such as Poisson or Binomial) and impose rigid conditions on the random weights. By leveraging a non-increasing function decomposition of the Stein equation solution introduced in \cite{BS25}, and using size-biasing, we construct a unified framework that applies under far broader conditions.

\medskip
Before presenting our main result, we first state the assumptions we need on the weights $\mathbf{W} = (W_{n,k})_{n \in \N, k \in [n]}$ and the random variables $(X_k)_{k \in \N}$. 
\medskip

\begin{itemize}
	\item[\namedlabel{a2}{(\textbf{A1})}]  We assume that the random weights $\mathbf{W} = (W_{n,k})_{n \in \N, k \in [n]}$ are non-negative real-valued random variables which are \textit{ordered}, i.e.,
	$$
	0 = :W_{n,0} \le W_{n,1} \le \dots \le W_{n,n} =: W_{\max}^{(n)};
	$$
	and are \textit{integrable}, i.e., satisfying $\E[W_{\max}^{(n)}] \in [0,\infty)$. Here by convention, we take $W_{n,0} = 0$.
	\item[\namedlabel{a1}{(\textbf{A2})}] Further, we assume that conditionally given $\mathbf{W}$, the sequence $(X_k)_{k \in \N}$ consists of mutually independent, non-negative integer-valued random variables.
\end{itemize}

Note that we do not assume any independence among the weights; in particular, they can be highly dependent, for instance, order statistics of independent random variables (see Section \ref{sec:orderstat}), or the spectra of a Circular Unitary Ensemble (see Section \ref{sec:CUE}). Moreover, we allow dependence between the weights $\mathbf{W}$ and the sequence $(X_k)_{k \in \N}$; see Section \ref{sec:CUE} for an application where this becomes relevant.

\medskip

Writing 
$
\lambda_k(\mathbf{W}) := \E[X_k \mid \mathbf{W}] \in [0,\infty),
$
define for $k \in [n]$ the random variables
\begin{equation*}
	\Delta_{n,k} := \theta (W_{n,k} - W_{n,k-1}), \quad \text{and} \quad D_{n,k} := \sum_{j=1}^k (\lambda_j(\mathbf{W}) W_{n,j} - \Delta_{n,j}).
\end{equation*}
For notational simplicity, denote
\begin{equation}\label{def:delD}
	\delta_n = \max_{k \in [n]} ({W}_{n,k} - {W}_{n,k-1}) = \frac{1}{\theta} \max_{k \in [n]}  \Delta_{n,k}, \quad \text{and} \quad D_n^* = \max_{k \in [n]} |D_{n,k}|.
\end{equation}
Also, let $x_{\min}^{(k)}$ be the smallest value that $X_k$ can take with positive probability, i.e., 
\begin{equation}\label{eq:xmin}
	x_{\min}^{(k)}(\mathbf{W}) := \min \{ x \in \mathbb{N}_0 : \P(X_k = x \mid \mathbf{W}) > 0 \}.
\end{equation}
For each $k$, conditionally given $\mathbf{W}$, let $X_k^*$ be a random variable having the size-biased distribution of $X_k$, defined by the relation 
\begin{equation}\label{eq:sb}
	\E[X_k f(X_k) \mid \mathbf{W}] = \lambda_k(\mathbf{W}) \E[f(X_k^*) \mid \mathbf{W}]
\end{equation}
for any test function $f$ for which the expectations exist. Define the aggregate error
\begin{equation}\label{eq:eps_n_random}
	\eps_n(\mathbf{W}) := \delta_n + \frac{2}{\theta} D_n^* + \frac{2}{\theta}\sum_{k=1}^n \lambda_k(\mathbf{W}) W_{n,k} \,\frac{ d_{TV}(X_k^*, X_k+1 \mid \mathbf{W})}{\P(X_k = x_{\min}^{(k)}(\mathbf{W}) \mid \mathbf{W})} + |W_{\max}^{(n)} - 1|,
\end{equation}
where $d_{TV}(\cdot, \cdot \mid \mathbf{W})$ denotes the conditional total variation distance, given $\mathbf{W}$.

Let $p_\theta$ denote the density of $D_\theta$. We note the fact (see e.g.\ \cite[Section~1]{Pi16b}) that there exists a constant $k_\theta>0$ such that 
$$
p_\theta(x) \le k_\theta \,\text{ for } \; x \ge 1,
$$ 
and in particular, we can take $k_1=e^{-\gamma}$, where $\gamma$ is the Euler-Mascheroni constant. Denote
\begin{equation}\label{eq:C_constants}
	C_1 := 2\theta^2 + (6\theta  + 10 \theta \, \zeta(1+\theta)) \max \left\{\frac{e^{-\theta \gamma}}{\Gamma(\theta)}, k_\theta\right\}, \quad \text{and} \quad C_2 := 3 (2\theta  + 4 \theta \, \zeta(1+\theta))^{\theta}.
\end{equation}

\begin{theorem}\label{thm:randrates_cond}
	Let $S_n = \sum_{k=1}^n W_{n,k} X_k$ be as in \eqref{eq:S_n} satisfying assumptions \ref{a2}--\ref{a1}. Let $C_1$ and $C_2$ be the constants as in \eqref{eq:C_constants} depending only on $\theta$. Then,
	$$
	d_K(S_n, D_\theta) \le \begin{cases} 
		C_1 \E[\eps_n(\mathbf{W})], & \text{if } \theta \ge 1, \\[6pt] 
		2\theta^2 \E[\eps_n(\mathbf{W})] + C_2 \left(\E[\eps_n(\mathbf{W})]\right)^\theta, & \text{if } \theta \in (0,1). 
	\end{cases}
	$$
\end{theorem}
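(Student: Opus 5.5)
The plan is to condition on $\mathbf{W}$ and prove a conditional version of the bound. For $\theta\ge1$ we show $d_K(\mathcal L(S_n\mid \mathbf W),D_\theta)\le C_1\eps_n(\mathbf W)$ almost surely. Averaging then gives $|\P(S_n\le t)-\P(D_\theta\le t)|\le C_1\E[\eps_n(\mathbf W)]$ uniformly in $t$. For $\theta\in(0,1)$ the conditional bound has the form $2\theta^2\eps_n+C_2\eps_n^\theta$, and we average using Jensen's inequality, since $x\mapsto x^\theta$ is concave. This reduces the problem to a deterministic-weight statement in which, given $\mathbf W$, the counts $X_k$ are independent with means $\lambda_k=\lambda_k(\mathbf W)$.

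For the conditional statement we use the Stein characterization of $\mathcal D_\theta$: $\E[Wf(W)]=\theta\E[f(W+1)-f(W)]$, or equivalently $\E[Wf(W)]=\theta\E\int_W^{W+1}f'$ in integral form. Fix $t$ and let $f=f_t$ solve $xf(x)-\theta(f(x+1)-f(x))=\ind\{x\le t\}-\P(D_\theta\le t)$, in the non-differentiated form used in \cite{BS25}. We then compute $\E[S_nf(S_n)]=\sum_k W_{n,k}\E[X_kf(S_n)]=\sum_k\lambda_kW_{n,k}\E f(S_n-X_k+X_k^*)$ by \eqref{eq:sb} and conditional independence. Next we replace $X_k^*$ by $X_k+1$. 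This costs $\|f\|_\infty\,d_{TV}(X_k^*,X_k+1)$ in principle, but the sharper denominator $\P(X_k=x_{\min}^{(k)})$ in \eqref{eq:eps_n_random} suggests a different route. One writes the difference as an expectation over $S_n^{(k)}=S_n-W_{n,k}X_k$ and compares against the event $X_k=x_{\min}$, so that an increment of $f$ evaluated at an independent shift can be dominated by its value at $S_n$ itself. The main term is then $\sum_k\lambda_kW_{n,k}\E f(S_n+W_{n,k})$, and we must compare it with $\theta\int_0^1 f(S_n+u)\,du$ (rescaled so the total jump length is $W_{\max}^{(n)}\approx1$). Writing $\int_0^{W_{\max}}f(S_n+u)\,du=\sum_k\int_{W_{n,k-1}}^{W_{n,k}}$, each piece approximates $\Delta_{n,k}f(S_n+W_{n,k})/\theta$ with error controlled by a local increment over length $\le\delta_n$. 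Abel summation then turns $\sum_k(\lambda_kW_{n,k}-\Delta_{n,k})f(S_n+W_{n,k})$ into $\sum_k D_{n,k}\,(f(S_n+W_{n,k})-f(S_n+W_{n,k+1}))$ plus a boundary term. This is where $D_n^*$ enters, multiplied by the total variation of $f$ along the path; this explains the factor $2/\theta$. Finally, $|W_{\max}^{(n)}-1|$ accounts for the mismatch between the jump length and $1$.

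Everything therefore hinges on properties of $f_t$. We follow \cite{BS25} and decompose $f_t=g_t+h_t$ with $g_t$ bounded and $h_t$ non-increasing, which is needed because $f_t$ is not Lipschitz. Sums of increments of $h_t$ along increasing paths telescope and are bounded by $\|h_t\|$-type constants. Increments of $g_t$ over short intervals of length $\delta$ are converted into probabilities $\P(S_n\in[t-\delta,t])$. These we bound by $\P(D_\theta\in[t-\delta,t])+2d_K$. The density bound $\max\{e^{-\theta\gamma}/\Gamma(\theta),k_\theta\}$ is used when $\theta\ge1$. When $\theta<1$ the density blows up near $0$, and $\P(D_\theta\le\delta)\asymp\delta^\theta$ produces the $C_2\eps^\theta$ term. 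The constants involving $\zeta(1+\theta)$ should come from sup-norm bounds on the pieces of $f_t$. This yields a self-referential inequality $d_K\le a\eps_n+b\,\eps_n d_K+\dots$, which we solve for $d_K$ when $\eps_n$ is small; the bound is trivial otherwise, since $C_1\eps_n\ge1$.

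I expect the main obstacle to be the size-bias replacement step with the $\P(X_k=x_{\min})$ denominator, together with controlling non-smooth increments of $f_t$ uniformly. Doing this without losing the Kolmogorov-distance self-bounding structure requires carefully pairing each increment with the monotone/bounded decomposition.
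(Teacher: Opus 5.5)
Your decomposition of the Stein functional matches the paper's $T_1,\dots,T_4$ in Theorem~\ref{thm:main_bound}: size-bias replacement, Abel summation producing $D_{n,k}$, right-endpoint approximation of $\int_{W_{n,k-1}}^{W_{n,k}}$, and the $|W_{\max}^{(n)}-1|$ mismatch. Conditioning plus Jensen is also how the paper concludes. Two slips to fix: the Stein identity should read $\E[Wf'(W)]=\theta\E[f(W+1)-f(W)]$, i.e.\ $\E[Wg(W)]=\theta\E\int_0^1 g(W+u)\,\dd u$ (your later computations use this, with your $f$ playing the role of $f_a'$); and the size-biased argument is $S_n^{(k)}+W_{n,k}X_k^*$, not $S_n-X_k+X_k^*$.

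\textbf{Where the argument breaks.} The gap is in how you bound these terms.
\begin{itemize}
\item \emph{Sup norms.} The function that enters is $f_a'$, which jumps by $\theta/a$ at $x=a$ and only satisfies $|f_a'|\le\theta^2+\theta/a$. Any bound through sup norms (your ``$\|h_t\|$-type constants'') gives errors like $D_n^*/a$. These are not $O(\eps_n)$ when $a$ is of order $\eps_n$.
\item \emph{Per-increment concentration.} In $T_3$ the short-interval increments are weighted by $\Delta_{n,k}$, whose sum is $\theta W_{\max}^{(n)}$. Replacing each $\P(S_n\in I_k)$ by $\P(D_\theta\in I_k)+2d_K$ therefore creates a $d_K$ term with coefficient of order $\theta/a$ and no factor $\eps_n$. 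The self-bounding inequality then cannot be closed.
\item \emph{The $T_1$ step.} You flag it as the main obstacle and leave it open. Comparing with $\{X_k=x^{(k)}_{\min}\}$ only works through $|g(x)-g(y)|\le g(\min\{x,y\})$ and $\E[g(S_n)\mid\mathbf W]\ge\P(X_k=x^{(k)}_{\min}\mid\mathbf W)\,\E[g(S_n^{(k)}+W_{n,k}x^{(k)}_{\min})\mid\mathbf W]$. Both require $g$ to be non-negative and non-increasing; a merely bounded piece does not qualify.
\end{itemize}

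\textbf{The missing idea.} Use Lemma~\ref{lem:cont} as $f_a'=(g_1+u_+)-(g_2+u_-)$, with all four pieces non-negative and non-increasing. Then prove, for every such $g$:
\begin{itemize}
\item $|T_1(g)|\le\eps_{n,2}(\mathbf W)\,\E[g(S_n)\mid\mathbf W]$, by the $g(\min)$ inequality above;
\item $|T_2(g)|\le D_n^*\,\E[g(S_n)\mid\mathbf W]$, by telescoping;
\item $0\le T_3(g)\le\theta\delta_n\,\E[g(S_n)\mid\mathbf W]$, by $\Delta_{n,k}\le\theta\delta_n$ and telescoping.
\end{itemize}
This collects every error into $\eps_n(\mathbf W)\,\E[\theta^2+g_2(S_n)\mid\mathbf W]$. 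Only then is $S_n$ compared with $D_\theta$, and only once. In $\E[g_2(S_n)]$ the first part is $\frac{\theta}{a}\P(S_n\le a)\le\theta R_\theta(a)+\theta d_K/a$: the factor $1/a$ is cancelled by the small probability. The tail part $\E[\theta a^\theta S_n^{-1-\theta}\ind(S_n>a)]$ is bounded by Lemma~\ref{lem:mom}, and this (not a sup norm) is where $\zeta(1+\theta)$ comes from.

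\textbf{The cutoff.} Even then the coefficient of $d_K$ is $c_\theta\eps_n/a$ with $c_\theta=\theta(1+2\zeta(1+\theta))$, which is not uniformly small in $a$. So ``solve when $\eps_n$ is small'' does not work. You must apply the pointwise bound only for $a\ge\tau_n=2c_\theta\eps_n(\mathbf W)$ and handle $a<\tau_n$ with Lemma~\ref{lem:lb}, at a cost of $\tau_nR_\theta(\tau_n)$. That cutoff term, not a short-interval density estimate for $D_\theta$, is the source of the $C_2\eps_n^\theta$ term for $\theta<1$ and of the specific constants $C_1$ and $C_2$.
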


The proof of Theorem \ref{thm:randrates_cond} is carried out in Section \ref{sec:1proofs}. We remark here on the different components in the aggregate error term $\eps_n(\mathbf{W})$. First note that $\lambda_j(\mathbf{W}) W_{n,j}, j \in [n]$ is simply the conditional expectation of the $j$-th summand in $S_n$, given the weights. If we consider the weights $\mathbf{W}$ to be deterministic, the approximation $\lambda_k(\mathbf{W}) W_{n,j} \approx \Delta_{n,j}$ can be interpreted as distributing the expected mass of the $k$-th summand over the continuous interval $(W_{n,j-1}, W_{n,j}]$ with intensity $\theta$. If $\delta_n \to 0$ as $n \to \infty$, then the intervals shrink, thereby yielding a continuous approximation over the interval $[0,W_{\max}^{(n)}]$. When $W_{\max}^{(n)}$ is close to $1$, that is the fourth component of the error $\eps_n(\mathbf{W})$ is small, this discrete allocation yields a coupling to a continuous uniform random variable on $[0,1]$, which via Stein's method yields the Dickman limit. Finally we observe that the third component in $\eps_n(\mathbf{W})$ involving $d_{TV}(X_k^*, X_k+1 \mid \mathbf{W})$ forces, conditionally given $\mathbf{W}$, the random variables $X_k$ to be close to a Poisson distribution.

\medskip

An important special case of Theorem \ref{thm:randrates_cond} arises when the random weight array $\mathbf{W}$ is independent of the sequence $(X_k)_{k \in \N}$. In this setting, the conditional expectations and conditional distributions of $X_k$ reduce to their deterministic marginals. We denote $\lambda_k := \E[X_k]$ and
$$
x_{\min}^{(k)} := \min \{ x \in \mathbb{N}_0 : \P(X_k = x) > 0 \}.
$$
Further, denote
\begin{equation}\label{eq:eps_n_indep}
	\eps_n := \delta_n + \frac{2}{\theta} D_n^* + \frac{2}{\theta}\sum_{k=1}^n \lambda_k W_{n,k} \, \frac{d_{TV}(X_k^*, X_k+1)}{\P(X_k =x_{\min}^{(k)})} + |W_{\max}^{(n)} - 1|.
\end{equation}

This simplification immediately yields the following corollary.

\begin{cor}\label{cor:indep_weights}
	Let $S_n = \sum_{k=1}^n W_{n,k} X_k$ satisfy Assumption \ref{a2}. Assume further that $\mathbf{W}$ is independent of the sequence $(X_k)_{k \in \N}$, and that $(X_k)_{k \in \N}$ consists of mutually independent, non-negative integer-valued random variables. Then, for the constants $C_1, C_2$ defined in \eqref{eq:C_constants},
	$$
	d_K(S_n, D_\theta) \le \begin{cases} 
		C_1 \E[\eps_n], & \text{if } \theta \ge 1, \\[6pt] 
		2\theta^2 \E[\eps_n] + C_2 \left(\E[\eps_n]\right)^\theta, & \text{if } \theta \in (0,1). 
	\end{cases}
	$$
\end{cor}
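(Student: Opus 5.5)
The corollary is a direct specialization of Theorem \ref{thm:randrates_cond}. The plan is to check that its hypotheses imply assumptions \ref{a2}--\ref{a1}, and that under independence the random aggregate error $\eps_n(\mathbf{W})$ in \eqref{eq:eps_n_random} coincides almost surely with $\eps_n$ in \eqref{eq:eps_n_indep}.

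First, \ref{a2} is assumed outright. For \ref{a1}, independence of $\mathbf{W}$ from the whole sequence $(X_k)_{k\in\N}$ means that the conditional joint law of $(X_k)$ given $\mathbf{W}$ equals the unconditional joint law, almost surely. Since the $X_k$ are mutually independent, they are also conditionally independent given $\mathbf{W}$, with the same non-negative integer-valued marginals. Hence the theorem applies.

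Next, I will identify each conditional quantity with its unconditional counterpart.
\begin{itemize}
\item Independence gives $\lambda_k(\mathbf{W}) = \E[X_k \mid \mathbf{W}] = \E[X_k] = \lambda_k$ almost surely.
\item Likewise $\P(X_k = x \mid \mathbf{W}) = \P(X_k = x)$ for every $x$, so $x_{\min}^{(k)}(\mathbf{W}) = x_{\min}^{(k)}$ and the denominators in the third term agree.
\item For the size-biased variable, the defining relation \eqref{eq:sb} with $\lambda_k(\mathbf{W}) = \lambda_k$ says the conditional law of $X_k^*$ given $\mathbf{W}$ is the ordinary size-biased law of $X_k$. The conditional law of $X_k + 1$ given $\mathbf{W}$ is the law of $X_k + 1$. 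Total variation distance depends only on the two marginal laws, so $d_{TV}(X_k^*, X_k+1 \mid \mathbf{W}) = d_{TV}(X_k^*, X_k+1)$.
\end{itemize}
The remaining ingredients, namely $\delta_n$, $D_n^*$ (through $D_{n,k}$, with $\lambda_j$ now deterministic) and $|W_{\max}^{(n)} - 1|$, are the same functions of $\mathbf{W}$ in both definitions. Therefore $\eps_n(\mathbf{W}) = \eps_n$ almost surely, and substituting $\E[\eps_n(\mathbf{W})] = \E[\eps_n]$ into Theorem \ref{thm:randrates_cond} gives both cases of the bound.

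The only point needing some care is the size-bias identification. When $\lambda_k = 0$, $X_k = 0$ almost surely and the corresponding summand vanishes, so any convention for $X_k^*$ is harmless. One should also note that everything is defined up to null sets, which does not affect expectations. I expect no genuine obstacle; the argument is bookkeeping.
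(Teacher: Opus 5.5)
Your proposal is correct and follows the paper's own route. The paper presents the corollary as an immediate specialization of Theorem \ref{thm:randrates_cond}: under independence, $\lambda_k(\mathbf{W})$, $x_{\min}^{(k)}(\mathbf{W})$ and the conditional total variation distances reduce to their unconditional counterparts, so $\eps_n(\mathbf{W}) = \eps_n$ almost surely. Your check of \ref{a1} from joint independence and your remark on the degenerate case $\lambda_k = 0$ spell out details the paper leaves implicit.
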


A different possible scenario is when the weights in \eqref{eq:S_n} are not naturally almost surely ordered as in assumption \ref{a2}, so that Theorem \ref{thm:randrates_cond} cannot be directly applied. However, in this case, conditioning on the weights one can artificially order the weights and their corresponding random counts and then apply Theorem \ref{thm:randrates_cond}.

For any $n \in \N$, let $0 \le W_{n,(1)} \le W_{n,(2)} \le \dots \le W_{n,(n)} =: W_{\max}^{(n)}$ denote the order statistics of the weights $(W_{n,1}, \dots, W_{n,n})$, and by convention set $W_{n,(0)} := 0$. Let $\sigma$ be a permutation of $[n]$ (measurable with respect to $\mathbf{W}$) such that $W_{n,\sigma(k)} = W_{n,(k)}$, breaking any ties arbitrarily. We define the ordered increments and the corresponding sum of discrepancies as
\begin{equation*}
	\Delta_{n,k}^{\text{os}}  := \theta (W_{n,(k)} - W_{n,(k-1)}), \quad \text{and} \quad D_{n,k}^{\text{os}}  := \sum_{j=1}^k (\lambda_{\sigma(j)}(\mathbf{W}) W_{n,(j)} - \Delta_{n,j}^{\text{os}}),
\end{equation*}
yielding the order-statistic equivalents of the maximal gap and maximal discrepancy in \eqref{def:delD} as
\begin{equation*}
	\delta_n^{\text{os}} = \max_{k \in [n]} (W_{n,(k)} - W_{n,(k-1)}), \quad \text{and} \quad D_n^{*,{\text{os}} } = \max_{k \in [n]} |D_{n,k}^{\text{os}}|.
\end{equation*}
Finally, we define the aggregate error term $\eps_n^{\text{os}} (\mathbf{W})$ exactly as in \eqref{eq:eps_n_random}, but replacing the gap and discrepancy terms with their ordered equivalents as
\begin{equation}\label{eq:eps_n_unordered}
	\eps_n^{\text{os}}(\mathbf{W}) := \delta_n^{\text{os}} + \frac{2}{\theta} D_n^{*,{\text{os}}} + \frac{2}{\theta}\sum_{k=1}^n \lambda_k(\mathbf{W}) W_{n,k} \, \frac{d_{TV}(X_k^*, X_k+1 \mid \mathbf{W})}{\P(X_k = x_{\min}^{(k)}(\mathbf{W}) \mid \mathbf{W})} + |W_{\max}^{(n)} - 1|.
\end{equation}

\begin{theorem}\label{thm:randrates_unordered}
	Let $\mathbf{W} = (W_{n,k})_{n \in \N, k \in [n]}$ be an array of non-negative integrable random variables, and suppose $(X_k)_{k \in \N}$ satisfies assumption \ref{a1}. Let $S_n = \sum_{k=1}^n W_{n,k} X_k$, and let $C_1, C_2$ be the constants defined in \eqref{eq:C_constants}. Then,
	$$
	d_K(S_n, D_\theta) \le \begin{cases} 
		C_1 \E[\eps_n^{\text{os}} (\mathbf{W})], & \text{if } \theta \ge 1, \\[6pt] 
		2\theta^2 \E[\eps_n^{\text{os}} (\mathbf{W})] + C_2 \left(\E[\eps_n^{\text{os}} (\mathbf{W})]\right)^\theta, & \text{if } \theta \in (0,1). 
	\end{cases}
	$$
\end{theorem}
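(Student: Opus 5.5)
We reduce Theorem \ref{thm:randrates_unordered} to Theorem \ref{thm:randrates_cond} by relabelling the summands according to the order statistics of the weights. Fix $n$ and the $\mathbf{W}$-measurable permutation $\sigma$ of $[n]$ with $W_{n,\sigma(k)} = W_{n,(k)}$, ties broken by a fixed deterministic rule (say, smallest index first) so that $\sigma$ is a measurable function of $\mathbf{W}$. Define the reordered weights $\widetilde W_{n,k} := W_{n,(k)}$ and counts $\widetilde X_k := X_{\sigma(k)}$ for $k\in[n]$. Then
\[
S_n=\sum_{k=1}^n W_{n,k}X_k=\sum_{k=1}^n \widetilde W_{n,k}\widetilde X_k
\]
almost surely, because $\sigma$ is a bijection. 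So it suffices to check that the pair $(\widetilde{\mathbf W},(\widetilde X_k))$ satisfies \ref{a2}--\ref{a1}, and that its aggregate error \eqref{eq:eps_n_random} equals $\eps_n^{\text{os}}(\mathbf W)$.

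For \ref{a2}, the $\widetilde W_{n,k}$ are nonnegative and ordered, $\widetilde W_{n,n}=W^{(n)}_{\max}$, and $\E[W^{(n)}_{\max}]\le\sum_k\E[W_{n,k}]<\infty$.

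For \ref{a1}, conditioning must be done carefully, since Theorem \ref{thm:randrates_cond} conditions on $\widetilde{\mathbf W}$ while the hypotheses are given conditionally on $\mathbf W$. The clean route is to work with $\mathbf W$ itself as the conditioning variable. Inspecting the proof of Theorem \ref{thm:randrates_cond}, it only uses conditional independence of the counts given the $\sigma$-field generated by the weights. The ordered weights, $\lambda$'s, $x_{\min}$'s and size-biased laws are all measurable with respect to that $\sigma$-field. Hence it is valid to condition on the larger $\sigma$-field $\sigma(\mathbf W)\supseteq\sigma(\widetilde{\mathbf W})$.

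Given $\mathbf W$, the permutation $\sigma$ is fixed. A fixed permutation of conditionally independent variables is conditionally independent, so $(\widetilde X_k)_{k\le n}$ are conditionally independent, nonnegative and integer-valued. Moreover, $\E[\widetilde X_k\mid\mathbf W]=\lambda_{\sigma(k)}(\mathbf W)$, and the conditional law of $\widetilde X_k^*$ is that of $X_{\sigma(k)}^*$. If we prefer not to reopen the proof, we instead apply Theorem \ref{thm:randrates_cond} with the weight array augmented to carry $\mathbf W$. For example, we set the conditioning object to be the whole array $\mathbf W$, noting that the theorem's statement only requires the counts to be conditionally independent given the object we condition on. I expect this measurability and conditioning bookkeeping to be the only real obstacle.

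It remains to match the error terms. The relabelled increments are $\theta(W_{n,(k)}-W_{n,(k-1)})=\Delta^{\text{os}}_{n,k}$. The partial discrepancies become $\sum_{j\le k}(\lambda_{\sigma(j)}W_{n,(j)}-\Delta^{\text{os}}_{n,j})=D^{\text{os}}_{n,k}$, which gives $\delta_n^{\text{os}}$ and $D_n^{*,\text{os}}$. The Poisson-closeness sum $\sum_k \lambda_{\sigma(k)}W_{n,\sigma(k)}\,d_{TV}(X^*_{\sigma(k)},X_{\sigma(k)}+1\mid\mathbf W)/\P(X_{\sigma(k)}=x^{(\sigma(k))}_{\min}\mid\mathbf W)$ equals the unpermuted sum in \eqref{eq:eps_n_unordered} after reindexing by the bijection $\sigma$. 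The last term $|W^{(n)}_{\max}-1|$ is unchanged. Hence $\eps_n(\widetilde{\mathbf W})=\eps_n^{\text{os}}(\mathbf W)$ pointwise. Taking expectations and invoking Theorem \ref{thm:randrates_cond} then yields both the $\theta\ge1$ and $\theta\in(0,1)$ bounds.
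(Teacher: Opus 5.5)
Your proposal is correct and follows the same route as the paper. Both arguments sort the weights by a $\mathbf{W}$-measurable permutation $\sigma$, observe that the permuted counts $X_{\sigma(k)}$ stay conditionally independent given $\mathbf{W}$ with means $\lambda_{\sigma(k)}(\mathbf{W})$, apply Theorem \ref{thm:randrates_cond} to the relabelled sum, and check term by term that the resulting aggregate error is $\eps_n^{\text{os}}(\mathbf{W})$. Your first treatment of the conditioning, keeping $\sigma(\mathbf{W})$ as the conditioning $\sigma$-field because the proof of Theorem \ref{thm:randrates_cond} works pathwise in the weights, is exactly what the paper does: it obtains the conditional bound on $d_K(S_n,D_\theta\mid\mathbf{W})$ and then takes expectations, using Jensen's inequality for $\theta\in(0,1)$. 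The augmented-array alternative is therefore unnecessary.
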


The proof of Theorem \ref{thm:randrates_unordered} is presented in Section \ref{sec:1proofs}. In most cases where Dickman convergence arises, there is usually a natural ordering of the weights present. However, in some scenarios, such as our application in Section \ref{sec:app_poisson}, the weights may only be ordered by their expectations. This lack of almost-sure ordering can significantly deteriorate the rate of convergence in the Kolmogorov distance due to a jump discontinuity in the derivative of the solution to the Stein equation. Nonetheless, Theorem \ref{thm:randrates_unordered} recovers good rates of convergence in the Kolmogorov distance under reasonably mild assumptions. For a concrete example and further discussion, see Section \ref{sec:app_poisson}. 

The lack of ordering of weights in assumption \ref{a2} however poses no problem when deploying Stein's method for Dickman approximation in the smooth $d_{1,1}$ distance. The $d_{1,1}$ distance between two random variables $X$ and $Y$ is defined as
		$$
		d_{1,1}(X, Y) := \sup_{h \in \mathcal{H}_{1,1}} |\E[h(X)] - \E[h(Y)]|,
		$$
		where $\mathcal{H}_{\alpha,\beta} := \{ h : \R \to \R \mid h \in \operatorname{Lip}_\alpha,h' \in \operatorname{Lip}_\beta \}$ for $\alpha,\beta>0$, with $\operatorname{Lip}_\alpha$ denoting the class of all Lipschitz functions with Lipschitz constant at most $\alpha$. One fundamental advantage of employing the smoother $d_{1,1}$ distance is that when applying Stein's method, the solutions to the corresponding Stein equation possess bounded second derivatives. This allows one to completely drop the ordering requirement on the weights formulated in assumption \ref{a2}.

		For a fixed $n \in \N$, let $0 = t_0 \le t_1 \le \dots \le t_n$ be any non-decreasing grid (possibly dependent on $\mathbf{W}$, but deterministic conditionally given $\mathbf{W}$). We denote the grid increments by $\Delta t_k := t_k - t_{k-1}$ for $k \in [n]$, and define the corresponding grid discrepancy sum as $D_{n,k}^{(t)} := \sum_{j=1}^k (\lambda_j(\mathbf{W}) W_{n,j} - \theta \Delta t_j)$, with the maximal grid gap and maximal grid discrepancy denoted by
		$$
		\delta_n^{(t)} := \max_{k \in [n]} \Delta t_k, \quad \text{and} \quad D_n^{(t)*} := \max_{k \in [n]} |D_{n,k}^{(t)}|.
		$$
		Let $d_1(X,Y) := \inf \E|X-Y|$ denote the Wasserstein distance, where the infimum is taken over all joint couplings of $X$ and $Y$. We define the aggregate error for the $d_{1,1}$ distance as
		\begin{align}\label{eq:eps_n_d11}
			\eps_n^{1,1}(\mathbf{W}) := \frac{\theta t_n}{4} \delta_n^{(t)} &+ \left(1 + \frac{t_n}{2}\right) D_n^{(t)*} + \theta |t_n - 1|  \nonumber \\
			&+ \frac{1}{2} \sum_{k=1}^n \lambda_k(\mathbf{W}) W_{n,k} \Big( |W_{n,k} - t_k| + W_{n,k} \, d_1(X_k^*, X_k+1 \mid \mathbf{W}) \Big).
		\end{align}

\begin{theorem}\label{thm:d11_bound}
	Let $S_n = \sum_{k=1}^n W_{n,k} X_k$ satisfy assumption \ref{a1}, and let $(t_k)_{k=0}^n$ be any deterministic grid partitioning $[0,1]$ as defined above. Then,
	$$
	d_{1,1}(S_n, D_\theta) \le \E[\eps_n^{1,1}(\mathbf{W})].
	$$
\end{theorem}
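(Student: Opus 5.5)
We use Stein's method with the Dickman Stein operator in its standard form for $\mathcal{D}_\theta$: $\E[D_\theta f(D_\theta)] = \theta\E[f(D_\theta+U)]$, equivalently $\E[(D_\theta/\theta) g'(D_\theta)] = \E[g(D_\theta+1)-g(D_\theta)]$. For $h\in\mathcal{H}_{1,1}$ we take the solution $g=g_h$ of
\[
x g'(x) - \theta\bigl(g(x+1)-g(x)\bigr) = h(x) - \E h(D_\theta).
\]
We assume the known smoothness estimates for this solution (as in \cite{BG19}), namely that $g'=:f$ satisfies $\|f\|_\infty\le 1$ and $\|f'\|_\infty\le 1/2$. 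Writing the equation in terms of $f$ gives $\E h(S_n)-\E h(D_\theta) = \E[S_n f(S_n) - \theta\int_0^1 f(S_n+u)\,du]$, so it suffices to bound the right-hand side conditionally on $\mathbf{W}$ and then take expectations.

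Conditionally on $\mathbf{W}$, and using (A2) with the size-bias relation \eqref{eq:sb}, we have
\[
\E[S_nf(S_n)\mid\mathbf{W}] = \sum_k \lambda_k W_{n,k}\,\E[f(S_n^{(k)}+W_{n,k}X_k^*)\mid \mathbf{W}],
\]
where $S_n^{(k)} = S_n - W_{n,k}X_k$ and $X_k^*$ is independent of the other counts. We then proceed through a chain of replacements.

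\emph{Step 1.} Replace $X_k^*$ by $X_k+1$. Coupling optimally and using $\|f'\|\le 1/2$ costs at most $\tfrac12\lambda_kW_{n,k}^2\, d_1(X_k^*,X_k+1\mid\mathbf{W})$. This turns the summand into $\lambda_kW_{n,k}\E f(S_n+W_{n,k})$.

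\emph{Step 2.} Replace $W_{n,k}$ by $t_k$ inside $f$. This costs $\tfrac12\lambda_kW_{n,k}|W_{n,k}-t_k|$. These two steps together account for the final sum in \eqref{eq:eps_n_d11}.

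\emph{Step 3.} Compare $\sum_k \lambda_kW_{n,k} f(S_n+t_k)$ with $\theta\sum_k \Delta t_k\, f(S_n+t_k)$. We do this by Abel summation with the partial sums $D^{(t)}_{n,k}$. The boundary term is bounded by $|D^{(t)}_{n,n}|\,\|f\|\le D_n^{(t)*}$. The summation terms involve $f(S_n+t_k)-f(S_n+t_{k+1})$, whose total is at most $\tfrac12\sum_k\Delta t_{k+1}\, D_n^{(t)*}\le \tfrac{t_n}{2}D_n^{(t)*}$. This gives the coefficient $(1+t_n/2)D_n^{(t)*}$.

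\emph{Step 4.} Compare the Riemann sum $\theta\sum_k\Delta t_k f(S_n+t_k)$ with $\theta\int_0^{t_n} f(S_n+u)\,du$. On each cell, $|f(S_n+t_k)-f(S_n+u)|\le \tfrac12(t_k-u)$, and integrating over the cell gives $\tfrac14\Delta t_k^2$. Summing, the total is at most $\tfrac{\theta}{4}\delta_n^{(t)} t_n$.

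\emph{Step 5.} Finally, replace $\int_0^{t_n}$ by $\int_0^1$. This costs $\theta|t_n-1|\,\|f\|_\infty\le\theta|t_n-1|$.

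Collecting the five errors and taking expectations over $\mathbf{W}$ gives exactly $\E[\eps_n^{1,1}(\mathbf{W})]$. Since no monotonicity of $f$ is used, neither the ordering (A1) nor any positivity of the increments beyond $\Delta t_k\ge0$ is needed.

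The main obstacle is securing the sharp constants $\|g_h'\|_\infty\le1$ and $\|g_h''\|_\infty\le 1/2$ for $h\in\mathcal{H}_{1,1}$, since the stated coefficients ($1/2$, $1/4$, $t_n/2$) depend on them. We would take them from \cite{BG19}. If these constants had to be rederived, we would use the representation $g_h'(x)=\E\int$ along the Dickman semigroup (or the explicit integral formula from \cite{BG19}) and differentiate under the integral. A secondary subtlety is the measurability and conditional independence of the size-bias couplings given $\mathbf{W}$, which (A2) provides.
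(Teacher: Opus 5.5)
Your proposal is correct and follows the paper's proof essentially step for step. Your $g$ is the paper's $f_h/\theta$, so your bounds $\|f\|_\infty\le 1$ and $\|f'\|_\infty\le 1/2$ are exactly Theorem~\ref{thm:d11solbd} rescaled. Your Steps 1--5 are the paper's terms $I_1, I_2, I_3, I_4$ and the $\theta|t_n-1|$ boundary term, with identical constants.
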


\begin{rem}[Comparison of $d_K$ and $d_{1,1}$ Bounds]
	\label{rem:metric_comparison}
	The $d_{1,1}$ bound in Theorem \ref{thm:d11_bound} differs fundamentally from the Kolmogorov bound in Theorem \ref{thm:randrates_cond} in two main aspects. First,
		due to a jump discontinuities in the derivative of the solutions to the Kolmogorov Stein equation, the proof of Theorem \ref{thm:randrates_cond} relies on a non-increasing functional decomposition of the derivative. In contrast, for $d_{1,1}$, the test functions $h \in \mathcal{H}_{1,1}$ yield a solution with bounded second derivatives. This permits a direct Taylor expansion approach over any conditionally deterministic grid $t_k$, allowing the framework to handle unordered weights without invoking order statistics. The presence of the arbitrary grid $t_k$ in the $d_{1,1}$ bound provides a natural flexibility: if the weights are ordered only by their expectations, we can take $t_k = \E[W_{n,k}]$, while if they are almost surely ordered, we can simply take $t_k = W_{n,k}$.
		
		Secondly, since for $\theta \in (0,1)$, the Dickman density $p_\theta(x)$ has a pole at the origin, bounding the Kolmogorov distance near the origin forces the convergence rate to slow to $(\eps_n(\mathbf{W}))^\theta$. On the other hand, because $d_{1,1}$ test functions are smooth, the $d_{1,1}$ error bounds remain linear in $\eps_n^{1,1}(\mathbf{W})$ across all $\theta > 0$.
	\end{rem}

We note that asymptotic convergence to generalized Dickman limits for weighted Bernoulli sums with independent, unordered random weights was established in \cite{Pi16b} relying fundamentally on Laplace transforms, yielding only weak convergence under very rigid condition on the expectations of the weights and the Bernoulli random variables. By successfully deploying Theorems \ref{thm:randrates_unordered} and \ref{thm:d11_bound}, we advance this domain much further by providing explicit, non-asymptotic error bounds in the Kolmogorov and Wasserstein-2 distance under a general setting with dependence.
Our results also generalize the bounds in \cite{BS25} in several directions. First, our bounds accommodate random weights. Moreover, we allow the weights to be dependent on each other. Finally, we can also effectively handle certain kinds of dependencies between the weights and the counts, as long as the counts are conditionally independent given the weights.
\medskip

The remainder of the paper is organized as follows. In Section \ref{sec:app}, we demonstrate the versatility of our main results through four distinct applications, each highlighting a different structural component of our framework. We begin in Section \ref{sec:harmonic} with a deterministic weight setting, analyzing sums of log-primes under a harmonic distribution over certain subsets of the natural numbers. Section \ref{sec:orderstat} then explores a model featuring dependent weights generated by independent increments. In Section \ref{sec:CUE}, we turn to a random matrix setting, examining sums weighted by the spectra of the Circular Unitary Ensemble (CUE), where the counts $X_k$ are only conditionally independent given the weights. Our final application, presented in Section \ref{sec:app_poisson}, considers randomly weighted Poisson sums in which all the random variables are mutually independent. In Section \ref{sec:StDickman}, we recall some preliminary results on Stein's method for Dickman distribution. The proofs of our main abstract results are established in Section \ref{sec:1proofs}, followed by the proofs of the four applications in Section \ref{sec:appproofs}. Finally, the proofs of several ancillary results related to the harmonic measure and CUE models are deferred to Appendices \ref{app:lemlbpf} and \ref{app:cuelb}.

    %
%
	\section{Applications}\label{sec:app}
We analyze four main applications below, demonstrating the flexibility of our main results. 

\subsection{Harmonic measures}\label{sec:harmonic}
We first apply our framework to a classical model in probabilistic number theory, namely, the distribution of prime factors under certain weighted harmonic measures. Let $(p_k)_{k\ge1}$ be an enumeration of the prime numbers in increasing order. For a fixed $n \in \mathbb{N}$, we define  $\Omega_n$ as the set of all integers whose prime factors are drawn exclusively from the first $n$ primes, that is,
$$
\Omega_n := \{ N \in \mathbb{N} : \text{for $p$ prime, }\, p \mid N \implies p \le p_n \}.
$$
Every integer $N \in \Omega_n$ possesses a unique prime factorization $N = \prod_{k=1}^n p_k^{X_k}$, where $X_k \in \mathbb{N}_0$. We also consider a natural variation of this model, namely, when we restrict our sample space to only square-free integers. Let $\Omega_n^{\text{sq}}$ denote the subset of $\Omega_n$ given by
$$
\Omega_n^{sq} = \{N \in \mathbb{N} : \text{for $p$ prime, }\, p \mid N \implies p \le p_n \text{ and } p^2 \nmid N\}.
$$
Any $N \in \Omega_n^{\text{sq}}$ can be expressed as $N = \prod_{k=1}^n p_k^{X_k^{\text{sq}}}$, with $X_k^{\text{sq}} \in \{0, 1\}$. 

It is well known that if one samples a number according to the \textit{harmonic measure} on the sets $\Omega_n$ or $\Omega_n^{\text{sq}}$, its logarithm, suitably scaled converges to the standard Dickman distribution (see e.g.\ \cite{BG19}). We consider a generalization of this measure, namely, a harmonic measure weighted by the so-called generalized divisor function, which naturally leads to generalized Dickman limits. 
For $\theta > 0$, the generalized divisor function $\tau_\theta : \mathbb{N} \to \mathbb{R}$ is defined as (see \cite[Eq.\ II.5.3]{tet}) the multiplicative arithmetic function (i.e., $\tau_\theta(ab) = \tau_\theta(a)\tau_\theta(b)$ when $\gcd(a,b) = 1$) satisfying
$$
\tau_\theta(p^j) = \binom{j+\theta-1}{j} = \frac{\theta(\theta+1)\cdots(\theta+j-1)}{j!}
$$
for any prime $p$ and integer $j \ge 0$. For integer values of $\theta = k \in \N$, the function $\tau_k(N)$ counts the number of ways $N$ can be expressed as an ordered product of $k$ integers \cite[Eq.\ II.5.4]{tet}. We endow $\Omega_n$ with a probability measure $\mathbb{P}_n$ given by
\begin{equation}\label{eq:harmonic_measure}
	\mathbb{P}_n(\{N\}) = \frac{1}{Z_n} \frac{\tau_\theta(N)}{N}, \quad N \in \Omega_n,
\end{equation}
where $Z_n = \sum_{N \in \Omega_n} \tau_\theta(N)/N \in (0,\infty)$ is the normalizing partition function. Similarly, we define the restricted harmonic measure $\mathbb{P}_n^{\text{sq}}$ on $\Omega_n^{\text{sq}}$ identically to \eqref{eq:harmonic_measure} as
\begin{equation}\label{eq:rest.harmonic_measure}
	\mathbb{P}_n^{\text{sq}}(\{N\}) = \frac{1}{Z_n^{\text{sq}}} \frac{\tau_\theta(N)}{N} = \frac{1}{Z_n^{\text{sq}}} \frac{\theta^{\omega(N)}}{N} , \quad N \in \Omega_n^{\text{sq}},
\end{equation}
where we utilize the partition function $Z_n^{\text{sq}} = \sum_{N \in \Omega_n^{\text{sq}}} \tau_\theta(N)/N \in (0,\infty)$. Here $\omega_n$ denotes the number of distinct prime divisiors of $N$ and the equality $\tau_\theta(N) = \theta^{\omega(N)}$ follows by noting that $\tau_\theta$ is multiplicative with $\tau_\theta(p) = \theta$. Thus, in both models, the likelihood of choosing an integer $N$ is proportional to its generalized divisor function and inversely proportional to its size. In particular, when $\theta=1$, since $\tau_\theta \equiv 1$, both the measures $\mathbb{P}_n(\{N\}), \mathbb{P}_n^{\text{sq}}(N) \propto 1/N$ are purely harmonic.

The following result, proved in Section \ref{sec:proofharmonic}, utilizes Corollary \ref{cor:indep_weights} and Theorem \ref{thm:d11_bound} to provide an optimal rate of convergence to the Dickman limit in both the Kolmogorov and $d_{1,1}$ distances for a transformation of a random integer sampled under $\mathbb{P}_n$ or $\mathbb{P}_n^{\text{sq}}$.

\begin{theorem}\label{thm:harmonic}
	Let $M_n, M_n^{\text{sq}}$ be random integers sampled according to the probability measures $\mathbb{P}_n$ and $\mathbb{P}_n^{\text{sq}}$, respectively. Then for any $\theta > 0$, there exists constants $0<K_1 \le K_2 <\infty$ depending only on $\theta$ such that 
	$$
	\frac{K_1}{(\log n)^{\min\{1,\theta\}}} \le d_K\left(\frac{\log M_n}{\log p_n}, D_\theta \right), \; d_K\left(\frac{\log M_n^{\text{sq}}}{\log p_n}, D_\theta\right) \le  \frac{K_2}{(\log n)^{\min\{1,\theta\}}}.
	$$
	Moreover, there exists constants $0<K_3 \le K_4 <\infty$ depending only on $\theta$ such that
		$$
		\frac{K_3}{\log n} \le d_{1,1}\left(\frac{\log M_n}{\log p_n}, D_\theta \right), \; d_{1,1}\left(\frac{\log M_n^{\text{sq}}}{\log p_n}, D_\theta\right) \le  \frac{K_4}{\log n}.
		$$
\end{theorem}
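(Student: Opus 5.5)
The plan is to write both $\log M_n/\log p_n$ and $\log M_n^{\text{sq}}/\log p_n$ in the form \eqref{eq:S_n} with deterministic weights $W_{n,k} = \log p_k/\log p_n$. These weights are ordered, and $W_{\max}^{(n)}=1$, so the last term of $\eps_n$ vanishes. The counts are independent because $\tau_\theta(N)/N$ factorizes over primes.

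\textbf{Identifying the counts.} Under $\mathbb{P}_n$, the factorization gives $\P(X_k=j)\propto \binom{j+\theta-1}{j}p_k^{-j}$. This is a negative binomial law with parameter $\theta$ and success ratio $q_k=1/p_k$. Hence $\lambda_k=\theta q_k/(1-q_k)$, $x_{\min}^{(k)}=0$ and $\P(X_k=0)=(1-q_k)^\theta$, which is bounded below uniformly in $k$. For the negative binomial the size-bias transform is explicit: $X_k^*-1$ is negative binomial with parameter $\theta+1$. So $X_k^*$ and $X_k+1$ can be coupled with $d_{TV}(X_k^*,X_k+1)\le \P(\text{extra geometric count}\ge 1)=q_k$. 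Similarly $d_1(X_k^*,X_k+1)\le \E[\mathrm{Geom}]=O(q_k)$.

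Under $\mathbb{P}_n^{\text{sq}}$, $X_k^{\text{sq}}$ is Bernoulli with success probability $\theta/(p_k+\theta)$. Here $X^*\equiv 1$, so $d_{TV}(X^*,X+1)=\P(X=1)=O(1/p_k)$.

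In both models the third term of $\eps_n$ is therefore
\[
O\Big(\sum_k \frac{\log p_k}{p_k^2\log p_n}\Big)=O(1/\log p_n)=O(1/\log n).
\]

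\textbf{The gap and discrepancy terms.} The gap term is $\delta_n=\max_k\log(p_k/p_{k-1})/\log p_n\le \log 2/\log p_n$, using Bertrand's postulate and $p_0:=1$ (the first gap is $\log 2/\log p_n$). For the discrepancy, $\lambda_k=\theta/p_k+O(p_k^{-2})$, so
\[
D_{n,k}=\frac{\theta}{\log p_n}\Big(\sum_{j\le k}\frac{\log p_j}{p_j}-\log p_k\Big)+O(1/\log p_n).
\]
Mertens' first theorem, $\sum_{p\le x}\log p/p=\log x+O(1)$, gives $D_n^*=O(1/\log p_n)$. Since $\log p_n\asymp\log n$, Corollary~\ref{cor:indep_weights} yields the upper bound $K_2(\log n)^{-\min\{1,\theta\}}$. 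For the $d_{1,1}$ upper bound I would apply Theorem~\ref{thm:d11_bound} with $t_k=W_{n,k}$. Then $t_n=1$ and the $|W_{n,k}-t_k|$ terms vanish, so every remaining term is $O(1/\log n)$ by the same estimates.

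\textbf{Lower bounds (main obstacle).} These are not given by the general theorems and need a direct argument. For $\theta\ge1$ and for $d_{1,1}$, I would exploit lattice effects near the origin. With probability bounded below, we have $X_k=0$ for all $k$ with $p_k\le p_n^{1/2}$, and then either $S_n=0$ or $S_n\ge \log 2/\log p_n$.

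\emph{The $d_{1,1}$ lower bound.} More robustly, compare means: $\E S_n=\sum_k\lambda_kW_{n,k}=\theta+c/\log p_n+o(1/\log p_n)$ with an explicit nonzero constant $c$ from Mertens' constant. Since $h(x)=x$ is $1$-Lipschitz but not quite in $\mathcal{H}_{1,1}$, I would use a smooth truncation of it together with exponential tails of both variables. Alternatively, I would compare second moments if the mean constant happens to vanish.

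\emph{The Kolmogorov lower bound for $\theta<1$.} Here I would use the atom at zero: $\P(S_n=0)=\prod_k\P(X_k=0)\asymp\prod_{p\le p_n}(1-1/p)^\theta\asymp(\log p_n)^{-\theta}$ by Mertens' third theorem. Meanwhile $\P(D_\theta\le\eps)\asymp\eps^\theta$. Taking $\eps=\eta/\log p_n$ for small $\eta$ makes this difference of order $(\log n)^{-\theta}$.

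\emph{The Kolmogorov lower bound for $\theta\ge1$.} Here the atom alone gives $(\log n)^{-\theta}$, which is too weak. Instead I would compare $\P(S_n\le t)$ with $\P(D_\theta\le t)$ at $t$ of order $1$. My first attempt is to transfer the $\Theta(1/\log n)$ mean discrepancy into a Kolmogorov discrepancy using the bounded density of $D_\theta$ and exponential tails of $S_n$. Making this rigorous, and computing that the constant $c$ is nonzero, is the step I expect to be hardest.
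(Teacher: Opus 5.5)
\textbf{Parts that match the paper.} Your upper bounds follow the paper's proof: Corollary~\ref{cor:indep_weights}, and Theorem~\ref{thm:d11_bound} with $t_k=W_{n,k}$, using Bertrand's postulate, Mertens' first theorem and the negative binomial size-bias coupling. Your $d_{1,1}$ lower bound via the mean and your Kolmogorov lower bound for $\theta<1$ via the atom at $0$ are also the paper's arguments. For the atom, evaluating at $t=0$, where $\P(D_\theta\le 0)=0$, already suffices.

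Some corrections to these parts:
\begin{itemize}
\item $h(x)=x$ does belong to $\mathcal{H}_{1,1}$ as defined, since only $h$ and $h'$ must be Lipschitz. No truncation or tail argument is needed.
\item The mean constant does not vanish. Rearranging \eqref{eq:Mertenst} gives $\sum_{p\le x}\log p/(p-1)=\log x-\gamma+o(1)$, hence $\E S_n=\theta-\theta\gamma/\log p_n+o(1/\log p_n)$. In the square-free case the constant is $\theta\big(R-\theta\sum_p \frac{\log p}{p(p+\theta)}\big)<0$.
\item The atom argument also covers $\theta=1$: there $\P(S_n=0)\asymp 1/\log p_n$, which is already the target order. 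You should also record $\P(S_n^{\text{sq}}=0)=\prod_k(1+\theta/p_k)^{-1}\asymp(\log p_n)^{-\theta}$.
\item The ``lattice'' remark is false: $\P(X_k=0 \text{ for all } p_k\le p_n^{1/2})\asymp(\log p_n)^{-\theta}\to 0$, so this probability is not bounded below.
\end{itemize}

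\textbf{The gap: Kolmogorov lower bound for $\theta>1$.} Transferring the mean discrepancy cannot give order $1/\log n$. You have $|\E S_n-\theta|\le T\, d_K(S_n,D_\theta)+\int_T^\infty\big(\P(S_n>t)+\P(D_\theta>t)\big)\,\dd t$. For fixed $T$ the tail integral is a fixed positive constant. To push it below $|c|/(2\log p_n)$ you need $T\to\infty$: of order $\log\log n$ with exponential tails, and still growing with the true Dickman-type tails. So you only get $d_K\gtrsim 1/(T\log n)$. The bounded density of $D_\theta$ does not help; it controls $d_K$ from above, not from below.

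\textbf{How the paper closes it.} The paper proves Lemma~\ref{lem:snlb}. It writes $\P(S_n\le 1)=Z_n^{-1}\sum_{m\le p_n}\tau_\theta(m)/m$. It evaluates the sum by the Selberg--Delange method plus Abel summation, and $Z_n$ by Mertens' product with an error term. This yields $\P(S_n\le1)-\P(D_\theta\le1)=C_\theta/\log p_n+O\big(\log\log p_n/(\log p_n)^{\min\{2,\theta\}}\big)$ with $C_\theta\neq 0$, and similarly in the square-free case.

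\textbf{A cheaper repair within your framework.} Use a bounded test function instead of $x$. Since $\E e^{-S}=\int_0^\infty e^{-t}\P(S\le t)\,\dd t$, you get $|\E e^{-S_n}-\E e^{-D_\theta}|\le d_K(S_n,D_\theta)$ with no tail loss. Combine the product formula $\E e^{-S_n}=\prod_k\big((1-1/p_k)/(1-e^{-W_{n,k}}/p_k)\big)^\theta$ with partial summation against $\sum_{p\le t}\log p/(p-1)=\log t-\gamma+o(1)$. This gives $\E e^{-S_n}=\E e^{-D_\theta}\big(1+\theta\gamma/\log p_n+o(1/\log p_n)\big)$, which settles $\theta\ge 1$. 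The square-free model is analogous, with constant $\theta\big(\theta\sum_p\frac{\log p}{p(p+\theta)}-R\big)>0$.
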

We remark here that the $d_{1,1}$ bounds in the specific case $\theta=1$ were derived in \cite[Theorems 1.3 and 1.4]{BG19}. Our results extend this to any $\theta>0$. The Kolmogorov bounds are new.

\subsection{Independent increment weights}\label{sec:orderstat} 

We now consider an application of our results where the weights $\mathbf{W}$ are not only random but also dependent. Let $(Y_i)_{i \in \N}$ be a sequence of independent, non-negative random variables. We assume $\E[Y_i] = 1$ and $\Var(Y_i) = \sigma_i^2 < \infty$ for all $i \in \N$. Denote $S_n^Y = \sum_{i=1}^n Y_i$, $n \in \N$, and define the random weights as
\begin{equation} \label{eq:unified_weights}
	W_{n,0} = 0, \quad \text{and} \quad W_{n,k} = \frac{1}{Z_n} \sum_{i=1}^k Y_i, \quad k \in [n],
\end{equation}
where $Z_n$ is a normalizing constant. We consider two canonical choices for $Z_n$:
\begin{itemize}
	\item \textbf{Model A:} $Z_n = n$. Thus $W_{\max}^{(n)} = W_{n,n} = S_n^Y/n$, and $ \E \, W_{\max}^{(n)} = 1$.
	\item \textbf{Model B:} $Z_n = S_n^Y$. This enforces $W_{\max}^{(n)} = W_{n,n} = 1$.
\end{itemize}

We study the generalized sum $S_n = \sum_{k=1}^n W_{n,k} X_k$, where $X_k \sim {\rm Poi}(\theta/k)$ are independent of each other and of the sequence $(Y_i)_{i \in \N}$. The specific choice of Poisson counts is made primarily for simplicity and exposition; our framework readily accommodates any distribution on the non-negative integers which provides sufficiently good control on $d_{TV}(X_k^*, X_k+1)$ or $d_{1}(X_k^*, X_k+1)$. 

A natural example for Model A, which was one of the main subjects of study in \cite{BS25}, is when $Y_k \equiv 1$ for all $k \in [n]$ so that $W_{n,k} = k/n$, $k \in [n]$. Such sums appear in many combinatorial examples, for instance, sums of positions of records in a random permutation \cite{ABT2000}. On the other hand, Model B is well-suited for simulating randomly deployed networks. Consider a base station (receiver) at the origin with $n-1$ sensors (transmitters) deployed uniformly at random along the interval $[0,1]$. If the increments in Model B are $Y_i \sim \text{Exp}(1)$ i.i.d., Rényi's representation implies the weights $W_{n,k} = \frac{1}{S_n^Y} \sum_{i=1}^k Y_i$ are distributed as the order statistics $U_{(k)}$ of these uniform sensor locations. 
To minimize total transmission energy---which scales proportionally with distance---an optimal routing protocol greedily assigns the heaviest data-transmission tasks to the nodes physically closest to the base station. Real-world network traffic is typically skewed; under standard Zipfian traffic profiles, the transmission volume decays harmonically.  Consequently, one can assume that the $k$-th closest node, located at distance $W_{n,k}$, is the $k$-th most active node and generates $X_k \sim \text{Poi}(\theta/k)$ data packets over a unit time window. The sum $S_n = \sum_{k=1}^n W_{n,k} X_k$ thus precisely captures the minimal total transmission cost.

The following result provides upper bounds on the Kolmogorov and $d_{1,1}$ distances between $S_n$ and $D_\theta$.

\begin{theorem}\label{thm:unified_weights}
	Let $S_n$ be as above with the normalizing constant $Z_n=n$ (Model A) or $Z_n = S_n^Y$ (Model B). Then there exists a constant $C \in (0,\infty)$ depending only on $\theta$ such that
	$$
	d_K(S_n, D_\theta) \le C \Bigg( \frac{\log n}{n^2}\sum_{i=1}^n \sigma_i^2 + \frac{1}{n}\Big(1 + \sum_{i=1}^n \sigma_i^2\Big)^{1/2} + \frac{1}{n} \sum_{j=1}^n \frac{1}{j} \Big(\sum_{i=1}^j \sigma_i^2\Big)^{1/2} \Bigg)^{\min\{1,\theta\}}.
	$$
	Moreover, there exists a constant $C' \in (0,\infty)$ depending only on $\theta$ such that
		$$
		d_{1,1}(S_n, D_\theta) \le C' \left(1 + \frac{1}{n}\Big(\sum_{i=1}^n \sigma_i^2\Big)^{1/2}\right) \Bigg( \frac{\log n}{n^2}\sum_{i=1}^n \sigma_i^2 + \frac{1}{n}\Big(1 + \sum_{i=1}^n \sigma_i^2\Big)^{1/2} + \frac{1}{n} \sum_{j=1}^n \frac{1}{j} \Big(\sum_{i=1}^j \sigma_i^2\Big)^{1/2} \Bigg).
		$$
\end{theorem}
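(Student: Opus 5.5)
Since the weights $W_{n,k}=Z_n^{-1}\sum_{i\le k}Y_i$ are almost surely ordered, the counts $X_k\sim{\rm Poi}(\theta/k)$ are independent of $\mathbf{W}$ and of each other, and $X_k^*\stackrel{d}{=}X_k+1$ for Poisson variables, we apply Corollary \ref{cor:indep_weights} for the Kolmogorov bound and Theorem \ref{thm:d11_bound} with $t_k=W_{n,k}$ for the $d_{1,1}$ bound. In both results the total variation and Wasserstein terms vanish, as does $|W_{n,k}-t_k|$. It therefore remains to bound $\E[\delta_n]$, $\E[D_n^*]$ and $\E|W_{\max}^{(n)}-1|$ by the bracketed quantity $E_n$ in the statement. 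For $\theta\ge1$ the Kolmogorov bound is then $C_1\E[\eps_n]\le C E_n$. For $\theta<1$ we need $E_n$ bounded, which we may assume since otherwise the claim is trivial; then $\E[\eps_n]\lesssim(\E[\eps_n])^\theta$. For $d_{1,1}$ the prefactors $t_n=W_{\max}^{(n)}$ appear multiplied by $\delta_n$ and $D_n^*$. We control these with Cauchy--Schwarz, or by splitting off the fluctuation of $W_{\max}^{(n)}$, which produces the factor $1+n^{-1}(\sum\sigma_i^2)^{1/2}$.

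\textbf{Model A.} Here $\lambda_j=\theta/j$, so
$$D_{n,k}=\frac{\theta}{n}\sum_{j=1}^k\Big(\frac{S_j^Y}{j}-Y_j\Big).$$
Write $S_j^Y=j+M_j$ and $Y_j=1+\xi_j$, where $M_j=\sum_{i\le j}\xi_i$ is a martingale. Then
$$D_{n,k}=\frac{\theta}{n}\sum_{j\le k}\Big(\frac{M_j}{j}-\xi_j\Big).$$
We bound $\max_k|D_{n,k}|$ by $\frac{\theta}{n}\sum_{j\le n}|M_j|/j+\frac{\theta}{n}\max_k|M_k|$. Taking expectations, with $\E|M_j|\le(\sum_{i\le j}\sigma_i^2)^{1/2}$ and Doob's $L^2$ inequality for the maximum, this gives the second and third terms of $E_n$. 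The edge term is $\E|W_{n,n}-1|\le n^{-1}(\sum\sigma_i^2)^{1/2}$.

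The term $\E[\delta_n]=\frac1n\E\max_i Y_i$ is the main obstacle. A crude bound is $\frac1n(1+\E\max|\xi_i|)\le\frac1n\big(1+(\sum\sigma_i^2)^{1/2}\big)$, since $\max|\xi_i|^2\le\sum\xi_i^2$. This is already dominated by the second term of $E_n$. I therefore expect the $\frac{\log n}{n^2}\sum\sigma_i^2$ term to arise only in Model B, or from second-order effects.

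\textbf{Model B.} Here $W_{n,k}=S_k^Y/S_n^Y$, and the maximal weight equals $1$. We write $W_{n,k}=\frac{S_k^Y}{n}\cdot\frac{n}{S_n^Y}$. The random factor $n/S_n^Y$ is common to all weights, so
$$D_{n,k}^{B}=\frac{n}{S_n^Y}D_{n,k}^{A},$$
since both $\lambda_jW_{n,j}$ and $\Delta_{n,j}$ scale by the same factor. The difficulty is taking the expectation of this ratio, because $S_n^Y$ may be small. We split on the event $\{S_n^Y\ge n/2\}$. On this event the ratio is at most $2$ and the Model A bounds apply. On the complement, all the quantities entering $\eps_n$ are bounded by constants: $\delta_n\le1$, and $|D_{n,k}|\le\theta\sum_j\frac{1}{j}W_{n,j}+\theta\le\theta(1+\log n)$. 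Chebyshev's inequality gives $\P(S_n^Y<n/2)\le 4\sum\sigma_i^2/n^2$. Multiplying the constant bound by this probability produces the $\frac{\log n}{n^2}\sum\sigma_i^2$ term. This truncation step is the delicate point. For $d_{1,1}$ the same decomposition applies, with $t_n=1$.
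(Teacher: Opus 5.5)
Your proposal is correct and follows essentially the same route as the paper. It uses Corollary \ref{cor:indep_weights} and Theorem \ref{thm:d11_bound} with $t_k=W_{n,k}$, the representation $D_{n,k}=\frac{\theta}{Z_n}\big(\sum_{j\le k}M_j/j-M_k\big)$ controlled by Doob's inequality, a Chebyshev truncation on $\{S_n^Y>n/2\}$ in Model B (the only source of the $\frac{\log n}{n^2}\sum\sigma_i^2$ term), and $W_{\max}^{(n)}\le 1+|W_{\max}^{(n)}-1|$ with Cauchy--Schwarz for the $d_{1,1}$ cross terms. The differences are cosmetic: you bound Model B pointwise on the good event via $D^{B}_{n,k}=(n/S_n^Y)D^{A}_{n,k}$ instead of using Cauchy--Schwarz against $\E[Z_n^{-2}\ind_{\mathcal{E}}]$, and for the Cauchy--Schwarz step you should note that your Minkowski/Doob estimates for $\delta_n$ and $D_n^*$ hold verbatim in $L^2$.
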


The proof of Theorem \ref{thm:unified_weights} can be found in Section \ref{sec:proofweights}. In the classical framework for logarithmic combinatorial structures governed by the Ewens Sampling Formula with scale parameter $\theta > 0$, the cycle counts $(C_1, \dots, C_n)$ of a random permutation of $n$ elements are distributed exactly as independent Poisson random variables $X_k \sim \text{Poi}(\theta/k)$ for $k \in [n]$, conditioned on $\sum_{k=1}^n k X_k = n$ (see \cite{ABT}). This motivates the study of the unconditioned sum $S_n = \sum_{k=1}^n \frac{k}{n} X_k$. In the specific case of Model A where $Y_i \equiv 1$ for all $i \in [n]$, the weights reduce deterministically to $W_{n,k} = k/n$. Consequently, our generalized framework exactly recovers this classical sum by choosing $X_k \sim \text{Poi}(\theta/k)$.

It was shown in \cite[Theorem 1.6]{BS25} and \cite[Theorem 1.2]{BG19} that in this case, the rate of Dickman convergence in the Kolmogorov and $d_{1,1}$ distances are $n^{-\min\{1,\theta\}}$ (this rate is also optimal, see \cite[Theorem 1.6]{BS25}) and $n^{-1}$, respectively. The following corollary, which is immediate from Theorem \ref{thm:unified_weights} upon taking $\sigma_i^2=0$ for all $i \in [n]$, reproduces such a bound. The proof of the additional optimality result for the $d_{1,1}$ bound can be found in Appendix \ref{app:C}. One can also obtain a similar result from Theorem \ref{thm:unified_weights} in the case when $X_k$'s are chosen to be Bernoulli distributed as in \cite[Theorem 1.1]{BS25}.

\begin{cor}\label{lem:beta_shift}
	For $\theta>0$, let $S_n = \sum_{k=1}^n \frac{k}{n} X_k$, where $X_k \sim {\rm Poi}\left(\frac{\theta}{k}\right)$, $k \in [n]$ are independent. Then, there exists constants $C, C' \in (0,\infty)$ depending only on $\theta$ such that
	$$
	C n^{-\min\{1, \theta\}}  \le d_K(S_n, D_\theta) \le C'  n^{-\min\{1, \theta\}} \quad \text{and} \quad C  n^{-1} \le d_{1,1} (S_n, D_\theta) \le C'  n^{-1}.
	$$
\end{cor}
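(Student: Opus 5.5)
The upper bounds follow by specializing Theorem \ref{thm:unified_weights} to Model A with $Y_i \equiv 1$, so that $\sigma_i^2 = 0$. The bracket there then becomes $\frac{1}{n}$, which gives $d_K(S_n,D_\theta) \le C n^{-\min\{1,\theta\}}$ and $d_{1,1}(S_n,D_\theta) \le C' n^{-1}$. As a sanity check, Corollary \ref{cor:indep_weights} applies directly. We have $\delta_n = 1/n$ and $W_{\max}^{(n)} = 1$. The discrepancy $D_{n,k} = \sum_{j\le k} (\frac{\theta}{j}\cdot\frac{j}{n} - \frac{\theta}{n}) = 0$ vanishes. Finally $X_k^* \stackrel{d}{=} X_k+1$ for Poisson counts, so $d_{TV}(X_k^*, X_k+1) = 0$. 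Hence $\eps_n = 1/n$.

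For the lower bounds, I would take the Kolmogorov lower bound $C n^{-\min\{1,\theta\}}$ from \cite[Theorem 1.6]{BS25}, as the text indicates, since this is exactly the same sum. The new part is the $d_{1,1}$ lower bound. Here I would exhibit a single smooth test function $h\in\mathcal H_{1,1}$ whose expectations differ by order $1/n$. The natural choice is an exponential $h(x) = c\,e^{-sx}$, with $c,s$ fixed so that $h\in\mathcal{H}_{1,1}$ on $[0,\infty)$; I would extend it to $\R$ by any $C^{1,1}$ extension, which does no harm since both variables are non-negative. Laplace transforms are explicit on both sides. For the sum,
\[
\E e^{-sS_n} = \exp\Big(\theta\sum_{k=1}^n \tfrac1k (e^{-sk/n}-1)\Big),
\]
and for the Dickman limit, $\E e^{-sD_\theta} = \exp\big(\theta\int_0^1 \frac{e^{-su}-1}{u}\,du\big)$.

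The exponent in the first expression is a Riemann sum for the integral, using the right endpoints $k/n$ of $g(u) = (e^{-su}-1)/u$, which extends smoothly to $u=0$. Euler--Maclaurin then gives
\[
\sum_{k=1}^n \tfrac1n\, g(k/n) - \int_0^1 g = \tfrac{1}{2n}\big(g(1)-g(0)\big) + O(n^{-2}),
\]
with $g(0) = -s$ and $g(1) = e^{-s}-1$. The main obstacle is checking that this first-order coefficient is nonzero. Since $g(1)-g(0) = e^{-s}-1+s > 0$ for every $s>0$, it is. The two exponents therefore differ by $\frac{\theta}{2n}(e^{-s}-1+s) + O(n^{-2})$. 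Because $\exp$ is Lipschitz with constant bounded below on the relevant bounded negative range, the Laplace transforms differ by at least $c''/n$ for large $n$. This yields $d_{1,1} \ge C n^{-1}$ for $n$ large, and small $n$ are absorbed into the constant. This is precisely the content deferred to Appendix \ref{app:C}.
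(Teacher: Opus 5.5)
Your proposal is correct and follows the paper's route. The upper bounds come from Theorem \ref{thm:unified_weights} with $\sigma_i^2=0$, the Kolmogorov lower bound is cited from \cite{BS25}, and the $d_{1,1}$ lower bound uses an exponential test function, whose explicit Laplace transforms reduce everything to the error of the right-endpoint Riemann sum for $(1-e^{-u})/u$ (up to sign and scaling). The only difference is that the paper bounds this error below directly by $c/(2n)$ with $c=\min_{[0,1]}|g'|>0$, using monotonicity of $g$; this holds for every $n$ and so avoids your Euler--Maclaurin asymptotics and the separate small-$n$ step. In that step you should still say why the two transforms differ for each fixed $n$, e.g.\ by strict monotonicity of $g$. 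Your $C^{1,1}$ extension of $e^{-sx}$ to $\R$ is a detail the paper omits.
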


Another special case of Theorem \ref{thm:unified_weights} is when $\max_{i \in [n]} \sigma_{i}^2 \le \sigma_{\max}^2 <\infty$ with $\sigma_{\max}^2$ independent of $n$. In this case, the bounds in Theorem \ref{thm:unified_weights} simplify yielding the following corollary. 

\begin{cor}\label{cor:unified_weights}
	Let $S_n$ be as in Theorem \ref{thm:unified_weights} with the normalizing constant $Z_n=n$ (Model A) or $Z_n = S_n^Y$ (Model B). Additionally assume that $0 \le \Var(Y_i) \le \sigma_{\max}^2 < \infty$ for $i \in [n]$. Then there exists a constant $C \in (0,\infty)$ depending only on $\theta$ such that
	$$
	d_K(S_n, D_\theta) \le C \left(\frac{1}{\sqrt{n}}\right)^{\min\{1,\theta\}}.
	$$
	Furthermore, there exists a constant $C'  \in (0, \infty)$ depending only on $\theta$ such that
	$$
		d_{1,1}(S_n, D_\theta) \le \begin{cases}
		\frac{C'}{n}, &\text{if $Z_n = n$\quad (Model A),}\\
		\frac{C' \log n}{n} &\text{if $Z_n = S_n^Y$ (Model B)}.
	\end{cases}
	$$
	Moreover, if additionally $\max_{i \in [n]} \E [Y_i^{2+\epsilon}]<\infty$ for some $\epsilon>0$, then $d_{1,1}(S_n, D_\theta) = \mathcal{O}(1/n)$ in Model B.
\end{cor}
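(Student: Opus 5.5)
The Kolmogorov bound and both $d_{1,1}$ rates are treated differently: the first follows directly from Theorem~\ref{thm:unified_weights}, while the $d_{1,1}$ rates need a finer use of Theorem~\ref{thm:d11_bound}. I treat $\sigma_{\max}$ as a fixed parameter absorbed into the constants.

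\textbf{Kolmogorov bound.} Insert $\sum_{i\le j}\sigma_i^2\le j\sigma_{\max}^2$ into the bracket of Theorem~\ref{thm:unified_weights}.
\begin{itemize}
\item The first term is at most $\sigma_{\max}^2\log n/n\le C n^{-1/2}$.
\item The second term is at most $n^{-1}(1+n\sigma_{\max}^2)^{1/2}\le Cn^{-1/2}$.
\item The third term is at most $\sigma_{\max}n^{-1}\sum_{j\le n}j^{-1/2}\le 2\sigma_{\max}n^{-1/2}$.
\end{itemize}
Raising the resulting $O(n^{-1/2})$ to the power $\min\{1,\theta\}$ gives the claim.

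\textbf{Why $d_{1,1}$ needs more.} The same substitution in the $d_{1,1}$ part of Theorem~\ref{thm:unified_weights} only gives $O(n^{-1/2})$, since the prefactor $1+\sigma_{\max}n^{-1/2}$ is bounded. The loss comes from bounding $\E[D_n^{(t)*}]$ and $\E|W_{n,k}-t_k|$ in absolute value, and these fluctuations are of order $n^{-1/2}$. So I would not invoke Theorem~\ref{thm:d11_bound} as a black box. Instead I would return to its proof and keep the expectation outside the random coefficients.

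\textbf{Model A.} Take the deterministic grid $t_k=\E W_{n,k}=k/n$. Then
\[
a_k:=\lambda_kW_{n,k}-\theta\Delta t_k=\frac{\theta(S_k^Y-k)}{kn},
\]
which has mean zero. The terms $|W_{n,k}-t_k|$ likewise enter the Taylor expansion of the proof linearly, through a centered random factor. The error terms in the proof then take the form $\E[a_k\,F_k]$, where $F_k$ is built from the Stein solution $g$ (bounded $g',g''$) evaluated at $S_n$ or a size-biased modification of it. I would write $\E[a_kF_k]=\operatorname{Cov}(a_k,F_k)$.

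\textbf{Decoupling.} Let $\tilde F_k$ be $F_k$ with $Y_1,\dots,Y_k$ replaced by independent copies. Then $\operatorname{Cov}(a_k,\tilde F_k)=0$, and $|F_k-\tilde F_k|$ is controlled by the Lipschitz property of $g'$ times the change in $S_n$. The change in $S_n$ is $n^{-1}\sum_{i\le k}(Y_i-\tilde Y_i)\sum_{j\ge i}X_j$. Its second moment is small because $\E X_j=\theta/j$ and $X$ is independent of $Y$.

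By Cauchy--Schwarz, each covariance is at most $\sqrt{\Var a_k}\cdot\|F_k-\tilde F_k\|_2$, with $\Var a_k\le \theta^2\sigma_{\max}^2/(kn^2)$. Summed over $k$, this should give $O(1/n)$.

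\textbf{Main obstacle.} The crude estimate $\E\big(\sum_{j\ge i}X_j\big)^2\lesssim\log^2 n$ loses a logarithm. To avoid it I would do the second-moment computation exactly, using that $\sum_{j\ge i}X_j$ is Poisson with mean $\theta\log(n/i)+O(1)$, and keep the weights $i\le k$ inside the sum rather than bounding them by $k$. Obtaining $O(1/n)$ without any logarithmic factor in this covariance estimate is the step I expect to be hardest.

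\textbf{Model B.} Here $W_{n,k}=S_k^Y/S_n^Y$ is no longer centered. With $t_k=k/n$, I would expand
\[
\frac{S_k^Y}{S_n^Y}=\frac{S_k^Y}{n}\Big(1-\frac{S_n^Y-n}{n}+R_n\Big),\qquad R_n=O\big((S_n^Y-n)^2/n^2\big),
\]
on the event $\{|S_n^Y-n|\le n/2\}$. The complementary event has probability $O(1/n)$ by Chebyshev.
\begin{itemize}
\item The first-order pieces are centered and are handled exactly as in Model A.
\item The cross term $(S_k^Y-k)(S_n^Y-n)/n^2$ has mean of order $k\sigma_{\max}^2/n^2$. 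Weighted by $\lambda_k=\theta/k$ and summed over $k$, it contributes $O(1/n)$.
\item The remainder $R_n$, multiplied by $\sum_k\lambda_kW_{n,k}\approx\theta$, is bounded using only second moments. The contribution from large deviations of $S_n^Y$ then costs an extra factor $\log n$, which gives the $C'\log n/n$ bound.
\end{itemize}
Under the extra moment condition $\max_{i}\E[Y_i^{2+\epsilon}]<\infty$, I would instead control $\E|S_n^Y-n|^{2+\epsilon}\lesssim n^{1+\epsilon/2}$ by Rosenthal's inequality. Truncating at this level removes the logarithm and gives $d_{1,1}(S_n,D_\theta)=O(1/n)$ in Model B.
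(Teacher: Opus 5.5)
Your Kolmogorov step is correct and coincides with the paper's. The $d_{1,1}$ part has a genuine gap, and it sits exactly where the $n^{-1/2}$ loss comes from.

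\textbf{Where the argument fails.} With the deterministic grid $t_k=k/n$ (and $X_k^*=X_k+1$, so $I_1=0$), the expansion contains
\[
I_2=\frac1\theta\sum_{k=1}^n\lambda_k\,\E\big[W_{n,k}\big(f_h'(S_n+W_{n,k})-f_h'(S_n+k/n)\big)\big].
\]
Since $\lambda_kW_{n,k}/\theta=\frac1n+\frac{M_k}{kn}$, its leading part is $\frac1n\sum_k\E[f_h'(S_n+W_{n,k})-f_h'(S_n+k/n)]$. There is no centered prefactor here: the centered quantity $W_{n,k}-k/n=M_k/n$ sits \emph{inside} the argument of $f_h'$.

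Linearizing gives $\E[(W_{n,k}-k/n)G_k]$ with $G_k=\int_0^1 f_h''\big(S_n+k/n+u(W_{n,k}-k/n)\big)\,\dd u$. To decouple this you need Lipschitz control of $f_h''$, but Theorem~\ref{thm:d11solbd} gives only $\|f_h''\|_\infty\le\theta/2$. Resampling $Y_1,\dots,Y_k$ may therefore change $G_k$ by $O(1)$. The argument then yields only $\frac{\theta}{2n^2}\sum_k\E|M_k|=O(n^{-1/2})$, the same order as the term $\frac12\sum_k\lambda_kW_{n,k}|W_{n,k}-t_k|$ you set out to beat.

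So the claim that these terms ``enter linearly through a centered random factor'' controlled by the Lipschitz property of $g'$ is false as stated. Differentiating the Stein equation twice does give a weighted bound $|f_h'''(x)|\le C_\theta/x$. Exploiting it would need a further argument, including separate treatment of small $W_{n,k}$, which your proposal does not contain. Model B, where $W_{n,k}-k/n=(nM_k-kM_n)/(nS_n^Y)$, has the same problem.

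Your announced ``main obstacle'' is, by contrast, harmless. For the $a_k$-terms, $\sum_{i\le k}\E\big(\sum_{j\ge i}X_j\big)^2\lesssim k(1+\log(n/k))^2$, and the decoupling gives $O(1/n)$ directly.

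\textbf{How the paper avoids this.} The paper never perturbs inside $f_h'$. It writes $S_n=S_n^0+V_n$ with $S_n^0=\sum_k\frac knX_k$ and $V_n=\frac1n\sum_kM_kX_k$, and Taylor-expands the test function $h$ itself.
\begin{itemize}
\item The first-order term $\E[h'(S_n^0)V_n]$ is exactly zero, because $h'(S_n^0)$ depends only on $X$ and $\E[V_n\mid X]=0$.
\item The remainder is at most $\frac12\E V_n^2=O(1/n)$, since $\sum_i\log^2(n/i)\le 2n$.
\item Corollary~\ref{lem:beta_shift} plus the triangle inequality finishes Model A.
\item In Model B, $V_n=U_n-\frac{M_n}{S_n^Y}U_n$ with $\E[U_n\mid X]=0$. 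The event $\{S_n^Y\le n/2\}$ costs $O(\log n/n)$, improved to $O(1/n)$ under the $(2+\epsilon)$-moment assumption via Marcinkiewicz--Zygmund.
\end{itemize}

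\textbf{A repair inside your framework (Model A).} Choose the grid $t_k=W_{n,k}$, which Theorem~\ref{thm:d11_bound} allows since the grid may depend on $\mathbf W$. Then $I_2=0$, and what remains is:
\begin{itemize}
\item $\E[b_kf_h'(S_n+W_{n,k})]$ with $b_k=\frac1n\big(\frac{M_k}{k}-(Y_k-1)\big)$;
\item $\E\int_{t_n}^1f_h'(S_n+t)\,\dd t$ with $1-t_n=-M_n/n$.
\end{itemize}
Both genuinely have the form ``centered factor times a Lipschitz function of $f_h'$'', up to $O(1/n)$ second-order errors. Your decoupling then works: resample only $Y_k$ for the $(Y_k-1)$ part, and $Y_1,\dots,Y_k$ for the $M_k/k$ part.
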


Note that taking $Y_i \equiv 1$ for all $i \in [n]$, the $d_{1,1}$ lower bound in Corollary \ref{lem:beta_shift} implies that the $d_{1,1}$ upper bounds obtained in Corollary \ref{cor:unified_weights} are in general not improvable. The proof of Corollary \ref{cor:unified_weights} is carried out on Appendix \ref{app:C}, the argument for the $d_{1,1}$ bound is similar to those in the proof of \cite[Theorem 1.7]{BS25}. Note that a direct application of Theorem \ref{thm:unified_weights} yields only a suboptimal $\mathcal{O}(1/\sqrt{n})$ bound in the $d_{1,1}$ distance. To obtain the sharper $\mathcal{O}(1/n)$ (or, $\mathcal{O}(\log^2 n/n)$) bound in Corollary \ref{cor:unified_weights}, we use the triangle inequality to bound $d_{1,1}(\sum_{k=1}^n (k/n)\,X_k, D_\theta)$ via Theorem \ref{thm:unified_weights} and bound $d_{1,1}(\sum_{k=1}^n (k/n)\,X_k, S_n)$ separately using Taylor's approximation.

\subsection{Weighted geometric sums on CUE spectra}\label{sec:CUE}
A key strength of the conditional framework in Theorems \ref{thm:randrates_cond} and \ref{thm:d11_bound} is its capacity to handle dependent random weights as well as cross-dependence between the weights and the counts, requiring only the conditional independence of the counts $X_k$. To illustrate this, we evaluate a conditionally geometric sum weighted by the highly rigid eigenphases of the Circular Unitary Ensemble (CUE).

A natural choice for dependent random weights are determinantal point processes (DPP). For $n \ge 1$, we consider an $n \times n$ random unitary matrix drawn from the Haar measure, which defines the Circular Unitary Ensemble (CUE). The complex eigenvalues $e^{i \theta_j}, j \in [n]$ of this matrix lie on the unit circle and are parameterized by their ordered eigenphases $0 \le \theta_1 < \theta_2 < \dots < \theta_n < 2\pi$ almost surely. We define our ordered random weights as the normalized gaps from the smallest eigenphase $\theta_1$, given by
\begin{equation}\label{eq:CUEW}
	W_{n,k} = \frac{1}{2\pi}(\theta_{k+1} - \theta_1), \quad k \in [n-1],
\end{equation}
such that $0 =:W_{n,0} < W_{n,1} < W_{n,2} < \dots < W_{n,n-1} < 1$. In contrast to independent random variables, the CUE eigenphases constitute a DPP \cite{Dyson} (also see \cite[Section 2.3]{BA}). Specifically, the resulting mutual repulsion manifests as \textit{eigenvalue rigidity}, ensuring that these gaps are tightly tethered to the uniform grid $k/n$, $k \in [n-1]$. The following result is an immediate consequence of \cite[Theorem 1]{MM19}. For completeness, its proof can be found in Section \ref{sec:CUEproofs}.

\begin{lemma}\label{lem:cue_rigidity}
	For $0 \le \theta_1 < \dots < \theta_n < 2\pi$ the ordered eigenphases of a CUE matrix, let $(W_{n,k})_{k \in [n-1]}$ be as in \eqref{eq:CUEW}. Then there exists a universal constant $C \in (0,\infty)$ such that
	$$
	\E\left[\max_{k \in [n-1]} |W_{n,k} - k/n|\right] \le C \; \frac{\log n}{n}, \quad n \ge 2.
	$$
\end{lemma}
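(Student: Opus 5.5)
The plan is to reduce the statement to the rigidity estimate of \cite[Theorem 1]{MM19}, which (in the form we use) says: for the ordered CUE eigenphases $0\le\theta_1<\dots<\theta_n<2\pi$, there is a universal $C_0$ such that
$$
\E\Big[\max_{j\in[n]}\big|\theta_j-\tfrac{2\pi j}{n}-c_n\big|\Big]\le C_0\frac{\log n}{n},
$$
where $c_n$ is a single (random) global rotation parameter that absorbs the rotational invariance of Haar measure. If instead the result is phrased with high probability, i.e.\ $\P(\max_j|\theta_j - 2\pi j/n - c_n| > A\log n/n)\le n^{-2}$ for a suitable $A$, we will convert it into the expectation bound at the end of the argument.

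Since the weights \eqref{eq:CUEW} are differences $\theta_{k+1}-\theta_1$, the rotation parameter cancels. Write $\theta_j=2\pi j/n+c_n+e_j$ with $\max_j|e_j|=:E_n$. Then
$$
W_{n,k}-\frac{k}{n}=\frac{1}{2\pi}\big(e_{k+1}-e_1\big), \qquad k\in[n-1],
$$
so $\max_{k\in[n-1]}|W_{n,k}-k/n|\le E_n/\pi$ deterministically. Taking expectations gives the claim with $C=C_0/\pi$.

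Two technical points need care, and I expect them to be the main obstacle.

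The first is matching the indexing and normalization conventions of \cite{MM19}. Their result may be stated for eigenvalue counting functions $N(0,\phi)$ rather than ordered eigenphases. In that case I would use the standard inversion. The event $\theta_j\ge 2\pi j/n+c+x$ forces the counting function to fall below its mean by about $nx/(2\pi)$ on the corresponding arc. Hence a uniform bound $\sup_\phi|N(0,\phi)-n\phi/2\pi|\le K\log n$ yields $\max_j|\theta_j-2\pi j/n|\le 2\pi(K\log n+1)/n$, with the global shift set to zero when the arcs are anchored at a fixed point. Anchoring at $0$ is harmless because our differences cancel any constant shift.

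The second is passing from the high-probability statement to the expectation. I split on the good event. On its complement we use the trivial bound $|W_{n,k}-k/n|\le 1$, which contributes at most $n^{-2}\le \log n/n$. On the good event the bound is $A\log n/(\pi n)$. For small $n\ge2$ the claim holds trivially since the left side is at most $1$, so enlarging $C$ handles all $n\ge 2$ uniformly.
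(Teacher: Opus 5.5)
Your proposal is correct and is essentially the paper's argument. The paper likewise bounds $\max_{k}|W_{n,k}-k/n|$ by $2\max_{j\in[n]}|\theta_j/(2\pi)-j/n|$, which is your cancellation of the shift with $c_n=0$. It then controls this by $\E[d_K(\mu_n,\nu)]\le c\log n/n$ from \cite[Theorem 1]{MM19}; that result is already stated in expectation, so your high-probability conversion is not needed. Your counting-function inversion is also the paper's step, and it is exact without the $+1$: since $\mathcal{N}_{\theta_j/(2\pi)}=j$ almost surely, $|\theta_j/(2\pi)-j/n|=|\theta_j/(2\pi)-\mathcal{N}_{\theta_j/(2\pi)}/n|\le d_K(\mu_n,\nu)$ directly.
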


Conditionally on $\mathbf{W} = (W_{n,k})_{k \in [n-1]}$, we define the sequence $(X_k)_{k \in [n-1]}$ as mutually conditionally independent geometric random variables with means
$$
\lambda_k(\mathbf{W}) := \E[X_k \mid \mathbf{W}] = \frac{\theta}{n W_{n,k}},
$$
where we recall that the probability mass function of a geometric random variable $X$ with success probability $p \in (0,1)$ and mean $(1-p)/p$ is given by $\P(X = k) = p(1-p)^k, k \in \N_0$. We study the sum $S_n = \sum_{k=1}^{n-1} W_{n,k} X_k$. Our motivation to choose the random variables this way comes from its connection to the free Bose gas; see Remark \ref{rem:BoseGas} below. An application of Theorem \ref{thm:randrates_cond} yields the following optimal Dickman approximation rates.

\begin{theorem}\label{thm:cue_geom}
	For $n \ge 2$, let $S_n = \sum_{k=1}^{n-1} W_{n,k} X_k$ be as above. Then there exists constants $K_1, K_2 \in (0,\infty)$ depending only on $\theta$ such that
	$$ 
	K_1 \left( \frac{1}{n} \right)^{\min\{1,\theta\}} \le d_K(S_n, D_\theta) \le K_2 \left( \frac{\log n}{n} \right)^{\min\{1,\theta\}}, \quad n \ge 2. 
	$$
	Moreover, there exists constants $K_3, K_4 \in (0,\infty)$ depending only on $\theta$ such that
		$$
		\frac{K_3}{n}  \le d_{1,1}(S_n, D_\theta) \le K_4 \frac{\log n}{n}, \quad n \ge 2.
		$$
\end{theorem}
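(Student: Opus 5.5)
The upper bounds follow from Theorem \ref{thm:randrates_cond} (Kolmogorov) and Theorem \ref{thm:d11_bound} with grid $t_k = W_{n,k}$ ($d_{1,1}$), applied with $n-1$ summands. The weights are a.s.\ ordered, so assumption \ref{a2} holds, and assumption \ref{a1} holds by construction. The task is therefore to bound $\E[\eps_n(\mathbf{W})]$ by $C\log n/n$ term by term. Lemma \ref{lem:cue_rigidity} is the main input, together with an explicit computation for geometric counts.

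\textbf{Per-term estimates.} For $X\sim\mathrm{Geo}(p)$ with mean $\lambda=(1-p)/p$, the size-biased law is $X^*\stackrel{d}{=}X+1+X'$ with $X'$ an independent copy of $X$. Hence
\[
d_{TV}(X^*,X+1)\le \P(X'\neq 0)=1-p=\frac{\lambda}{1+\lambda}\le \lambda,
\qquad
d_1(X^*,X+1)\le \E X'=\lambda .
\]
Also $x_{\min}=0$ and $\P(X=0)=p=1/(1+\lambda)$. With $\lambda_k=\theta/(nW_{n,k})$, the third term of $\eps_n$ becomes
\[
\frac{2}{\theta}\sum_k \lambda_k W_{n,k}\,\lambda_k=\frac{2}{\theta}\sum_k \frac{\theta^2}{n^2W_{n,k}} .
\]
This requires controlling $\E[1/W_{n,k}]$, which I would handle via rigidity: on the event that $\max_k|W_{n,k}-k/n|$ is small compared to $k/n$, we have $1/W_{n,k}\lesssim n/k$, which gives $\sum_k \theta/(n k)\lesssim \log n/n$. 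For small $k$ I would use a lower tail bound for the CUE gaps, $\P(W_{n,1}\le s/n)\lesssim s^3$ (level repulsion $\beta=2$). This gives $\E[1/W_{n,1}]\lesssim n$, and similarly for the first few $k$. I expect this to be the main obstacle: the expectation of an inverse gap needs a genuine small-gap estimate beyond Lemma \ref{lem:cue_rigidity}. A cruder alternative would be to bound $\sum_k \min\{\lambda_k,1\}$ using $d_{TV}\le 1$, but the $1/\P(X_k=0)=1+\lambda_k$ factor then still needs $\E[\lambda_k^2 W_{n,k}]$, so the gap bound seems unavoidable. I would defer it to an appendix lemma via the determinantal structure.

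\textbf{Remaining terms.} The identity $\lambda_jW_{n,j}=\theta/n$ is exact, so
\[
D_{n,k}=\frac{\theta k}{n}-\theta W_{n,k},\qquad D_n^*=\theta\max_k |W_{n,k}-k/n| .
\]
Lemma \ref{lem:cue_rigidity} then gives $\E D_n^*\le C\log n/n$. Moreover,
\[
\delta_n\le \frac1n+2\max_k|W_{n,k}-k/n| \quad\text{and}\quad |W^{(n-1)}_{\max}-1|\le \frac1n+\Big|W_{n,n-1}-\frac{n-1}{n}\Big|,
\]
so both are also $O(\log n/n)$ in expectation. Summing, $\E\eps_n\le C\log n/n$, and Theorem \ref{thm:randrates_cond} yields the Kolmogorov upper bound with exponent $\min\{1,\theta\}$ (since $\log n/n\le 1$, the term $2\theta^2\E\eps_n$ is dominated by $(\E\eps_n)^\theta$ when $\theta<1$). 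For $d_{1,1}$, with $t_k=W_{n,k}$ we have $t_{n-1}\le 1$, and the remaining term is
\[
\tfrac12\sum_k \lambda_kW_{n,k}^2\lambda_k=\frac{\theta^2}{2n^2}(n-1)\le \frac{\theta^2}{2n}.
\]
So $\E\eps^{1,1}\le C\log n/n$.

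\textbf{Lower bounds.} I would use the lattice structure near the origin. On the event $\{X_k=0\ \forall k\}$ we have $S_n=0$, so $\P(S_n=0)=\E\prod_k(1+\lambda_k)^{-1}$. Since $\sum_k\lambda_k\approx\theta\log n$, this is about $c\,n^{-\theta}$, because $\prod_k(1+\theta/k)^{-1}\asymp n^{-\theta}$, with an upper bound on the $\lambda_k$ on the rigidity event via Jensen. Meanwhile $\P(D_\theta\le \epsilon)\asymp\epsilon^\theta$, so comparing at $t=0$ and $t$ slightly negative gives an atom of size $\gtrsim n^{-\theta}$, which is the right order for $\theta<1$. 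For $\theta\ge1$ I would imitate the argument of \cite[Theorem 1.6]{BS25}: $S_n$ has no mass in $(0,W_{n,1})$ except at $0$, while $D_\theta$ puts mass $\asymp n^{-\theta}$ there. A cleaner route is to compare $\P(S_n\le 1/(2n))$ with $\P(D_\theta\le 1/(2n))$, using the density of $D_\theta$ and the typical gap $W_{n,1}\approx 1/n$. For $d_{1,1}$, I would test against a smooth function, such as a mean or second-moment comparison, exactly as in the appendix argument for Corollary \ref{lem:beta_shift}, using Lemma \ref{lem:cue_rigidity} to transfer the deterministic $k/n$ computation with error $o(1/n)$ where possible. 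The lower bounds are secondary; the inverse-gap moment remains the crux.
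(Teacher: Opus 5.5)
Your upper-bound skeleton matches the paper's: Theorems~\ref{thm:randrates_cond} and~\ref{thm:d11_bound} with $t_k=W_{n,k}$, $D_n^*=\theta\Delta_n$, $\delta_n\le 2\Delta_n+1/n$, and the geometric size-bias computation that reduces the third term of $\eps_n(\mathbf W)$ to $\frac{2\theta}{n^2}\sum_k W_{n,k}^{-1}$. Your $d_{1,1}$ upper bound is complete. The Kolmogorov upper bound, however, rests on $\sum_k\E[W_{n,k}^{-1}]=O(n\log n)$, which you leave open. The route you sketch does not close with the tools you cite. Lemma~\ref{lem:cue_rigidity} is only an $L^1$ bound on $\Delta_n=\max_k|W_{n,k}-k/n|$, so Markov gives just $\P(\Delta_n>k/(2n))\lesssim \log n/k$. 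The bad-event contributions are then not summable to $O(n\log n)$: H\"older with $\sup_n\E[(nW_{n,1})^{-p}]<\infty$, $p<3$, gives a total of order $n^{1+1/p}$ up to logarithms. What you need is exponential concentration of each $W_{n,k}$ at scale $k/n$, and that comes from the determinantal structure, not from global rigidity. The paper does this directly. It expresses $\P(W_{n,k}\le s)$ through $\P_0(\mathcal N_s\ge k)$; under the reduced Palm measure $\mathcal N_s$ is a sum of independent Bernoulli variables, so Chernoff gives $(e\mu(s)/k)^k$. With $\mu(s)\le\min\{ns,C'n^3s^3\}$ (Lemma~\ref{lem:palm_mean_bounds}), integrating against $s^{-2}$ yields $\E[W_{n,k}^{-1}]\le Cn/k$ for every $k$ at once (Lemma~\ref{lem:cue_inverse}), with no small-$k$/large-$k$ split.

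The more serious gap is the Kolmogorov lower bound for $\theta>1$. Near the origin both distribution functions are only of size $n^{-\theta}$: the atom $\P(S_n=0)$ and $\P(D_\theta\le c/n)$ are both of that order. So comparing at $0$ or at $1/(2n)$ gives at best $n^{-\theta}=o(n^{-1})$. The lattice argument available for the deterministic weights $k/n$ is also lost, since the $W_{n,k}$ are continuous random variables and $S_n$ has no lattice structure beyond its atom at $0$. The missing idea is to compare a bulk functional. One has $\E e^{-S_n}-\E e^{-D_\theta}=\int_0^\infty(\P(S_n\le x)-\P(D_\theta\le x))e^{-x}\,\dd x\le d_K(S_n,D_\theta)$. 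Lemma~\ref{lem:laplace_Sn} then gives $\E e^{-S_n}\ge \E e^{-D_\theta}(1+C/n)$, using Jensen, $\log(1+x)\le x$, and the fact that the Palm one-point density is $\frac{n}{2\pi}$ minus a Fej\'er kernel of total mass one; that deficit produces the $C/n$ excess.

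For $\theta\le 1$ your atom argument is the right one, but it needs $\E\sum_k\log(1+\lambda_k)\le\theta\log n+O(1)$ with the sharp constant $\theta$. Rigidity at scale $\log n/n$ gives no control for $k\lesssim\log n$, and plugging in $\E[W_{n,k}^{-1}]\le Cn/k$ would give exponent $C\theta$. The paper instead uses $\log(1+a/x)=\int_x^\infty\frac{a}{s(s+a)}\,\dd s$ together with the first-intensity bound $\mu(s)\le ns$.

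For the $d_{1,1}$ lower bound, no transfer through rigidity is needed, and Lemma~\ref{lem:cue_rigidity} could not give $o(1/n)$ anyway. Since $\lambda_kW_{n,k}=\theta/n$, you get $\E S_n=\theta-\theta/n$ exactly, and $h(x)=x$ yields $d_{1,1}\ge\theta/n$.

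One caution if you adopt the Palm description. $\theta_1$ is the first eigenphase after a fixed origin, so the configuration seen from it is the reduced Palm law length-biased by $\frac{n}{2\pi}$ times the gap preceding $\theta_1$, not the Palm law itself. Cauchy--Schwarz absorbs this in the inverse-moment and $\theta\le1$ estimates. In the Laplace step, however, it shifts $\E[\frac1n\sum_k g(W_{n,k})]$, with $g(x)=(1-e^{-x})/x$, by an amount of order $1/n$. You would have to show that this shift does not cancel the Fej\'er gain.
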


The convergence rates are therefore optimal, up to the extraneous logarithmic factor. The proof of the result is contained in Section \ref{sec:CUEproofs}.

\begin{rem}[Connection to the Free Bose Gas]\label{rem:BoseGas} In a system of non-interacting indistinguishable particles (bosons), each of which can occupy one of a discrete set of $n$ quantum states, the total energy of the system is given by $E = \sum_{k=1}^n \epsilon_k X_k$, where $\epsilon_k$ is the energy level of state $k$ and $X_k$ is the corresponding number of occupying particles. For such systems, there exists a transition temperature below which a macroscopic fraction of the particles condenses into the ground state, a phenomenon known as Bose-Einstein condensation. In the grand canonical ensemble formulation of this model, the occupation numbers $(X_1, \dots, X_n)$ are modeled as mutually independent, geometrically distributed random variables (see e.g., \cite[Page 2]{ChaGCE}). In particular, one obtains the Bose-Einstein distribution, which dictates that the probability of observing a specific configuration $(x_1, \dots, x_n)$ is proportional to
	$$\exp \left(-\frac{1}{k_B T} \sum_{k=1}^{n} x_k \left(\epsilon_k-\mu\right)\right),
	$$
	where $T$ is the temperature, $k_B$ is Boltzmann's constant, and $\mu$ is the chemical potential, with $\mu < \epsilon_k$ for all $k \in [n]$. This yields the exact geometric means $(e^{ (\epsilon_k - \mu)/(k_B T)} -1)^{-1}$, motivating our choice of $X_k$. Furthermore, guided by the Bohigas-Giannoni-Schmit (BGS) conjecture for chaotic quantum systems lacking time-reversal symmetry, we model the energy of state $k$ relative to the chemical potential, $\epsilon_k - \mu$, using the normalized CUE spectral gaps $W_{n,k}$, a surrogate that crucially ensures the required inverse moments remain finite (see Lemma \ref{lem:cue_inverse}). Finally, rather than using the exact exponential means, we apply a linear approximation to the exponential denominator to define the means $\lambda_k(\mathbf{W}) = \theta/(n W_{n,k})$, in order to obtain a Dickman limit.
\end{rem}


\subsection{Independently weighted Poisson sums}\label{sec:app_poisson}

In our final application, we consider a model with unordered random weights. Let $(Y_k)_{k \in \N}$ be a sequence of independent, non-negative random variables with $\E[Y_k] = k$. Assume that for some $p \ge 1$, there exist constants $M_{k,p} \in (0,\infty)$ such that
\begin{equation}\label{eq:pmom}
	\E[|Y_k - k|^p] \le M_{k,p} \quad \text{for all $k \in \N$}.
\end{equation}
Define the weights $W_{n,k} = Y_k/n$, and for $\theta > 0$, let the counts $X_k \sim \text{Poi}(\theta/k)$ be mutually independent and independent of the sequence $(Y_k)_{k \in \N}$ to form the weighted sum $S_n = \sum_{k=1}^n W_{n,k} X_k$. In \cite{BG19, BS25} it was shown that for square-integrable variables $Y_k$, the smooth Wasserstein distance $d_{1,1}$ between $S_n$ and $\mathcal{D}_\theta$ converges to zero, provided the total variance $\sum_{k=1}^n \operatorname{Var}(Y_k)$ does not grow too rapidly. However, due to the jump discontinuity in the derivative of the solutions to the Kolmogorov Stein equation, these prior works were unable to handle this unordered example in the Kolmogorov distance. While the almost-sure ordering requirement of assumption \textbf{(A1)} precludes the direct application of Theorem \ref{thm:randrates_cond}, we deploy Theorem \ref{thm:randrates_unordered} to establish a non-asymptotic bound in the Kolmogorov distance.

Recall, we say $Y_k$, $k \in \N$ has a sub-Gaussian tail if there exists a universal constant $C \in (0,\infty)$ such that for all $k \in \N$ and $t \ge 0$,
\begin{equation}\label{eq:subgaussian_tail}
	\P(|Y_k - k| \ge t) \le 2 \exp\left( - \frac{t^2}{C k} \right).
\end{equation}

\begin{theorem}\label{thm:poisson_app}
	For the weighted sum $S_n = \sum_{k=1}^n W_{n,k} X_k$ as above, and the constants $C_1, C_2 \in (0,\infty)$ as in \eqref{eq:C_constants},
	\begin{equation}\label{eq:PoiFirst}
		d_K(S_n, D_\theta) \le \begin{cases} 
			C_1 \bar{\eps}_n, & \text{if } \theta \ge 1, \\ 
			2\theta^2 \bar{\eps}_n + C_2 \bar{\eps}_n^\theta, & \text{if } \theta \in (0,1),
		\end{cases}
	\end{equation}
	where
	$$
	\bar{\eps}_n = \frac{1}{n} + \frac{5 + 2\sum_{j=1}^n \frac{1}{j}}{n} \Bigg( \sum_{k=1}^n M_{k,p} \Bigg)^{1/p}.
	$$

If we further assume that $(Y_k)_{k \in \N}$ satisfies \eqref{eq:subgaussian_tail}, then there exists a constant $K \in (0,\infty)$, depending only on $C$ and $\theta$, such that
		$$
		d_K(S_n, D_\theta) \le K \left(\frac{(\log n)^{3/2}}{\sqrt{n}} \right)^{\min\{1,\theta\}}.
		$$
	\end{theorem}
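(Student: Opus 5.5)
We apply Theorem \ref{thm:randrates_unordered} and bound $\E[\eps_n^{\text{os}}(\mathbf{W})]$ by $\bar{\eps}_n$. Since $\mathbf{W}$ is independent of the counts, $\lambda_k(\mathbf{W})=\theta/k$ and the third term of \eqref{eq:eps_n_unordered} vanishes. Indeed, for Poisson $X_k$ the size-biased law satisfies $X_k^*\stackrel{d}{=}X_k+1$, so $d_{TV}(X_k^*,X_k+1)=0$. It therefore remains to control $\E[\delta_n^{\text{os}}]$, $\E[D_n^{*,\text{os}}]$ and $\E|W_{\max}^{(n)}-1|$. All of these are measured against the deterministic grid $k/n$ through the single quantity
\[
R_n:=\max_{k\in[n]}|Y_{(k)}-k|\le\max_{k\in[n]}|Y_k-k|.
\]
The inequality holds because sorting is $\ell^\infty$-nonexpansive: the sorted vector of $(Y_k)$ is at most $\max_k|Y_k-k|$ away from the sorted vector $(1,\dots,n)$. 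By Jensen and a union bound,
\[
\E R_n\le \Bigl(\E\max_k|Y_k-k|^p\Bigr)^{1/p}\le\Bigl(\sum_k M_{k,p}\Bigr)^{1/p}=:M.
\]

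The individual terms are handled as follows.
\begin{itemize}
\item \emph{Top weight.} Since $|W_{\max}^{(n)}-1|=|Y_{(n)}-n|/n\le R_n/n$, we get $\E|W_{\max}^{(n)}-1|\le M/n$.
\item \emph{Gaps.} We have $W_{n,(k)}-W_{n,(k-1)}\le (1+2R_n)/n$, because each order statistic is within $R_n/n$ of $k/n$. Hence $\E\delta_n^{\text{os}}\le 1/n+2M/n$.
\item \emph{Discrepancy.} Write $\sigma$ for the sorting permutation. The sum of the increments $\Delta_{n,j}^{\text{os}}$ telescopes to $\theta W_{n,(k)}$, so
\[
D_{n,k}^{\text{os}}=\theta\Bigl(\sum_{j=1}^k \frac{Y_{\sigma(j)}}{n\,\sigma(j)}-\frac{Y_{(k)}}{n}\Bigr).
\]
Compare $Y_{\sigma(j)}=Y_{(j)}$ with $j$ and $\sigma(j)$ with $j$. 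The exact identity $\sum_{j\le k} j/j=k$ matches $Y_{(k)}\approx k$ up to $R_n$.
\end{itemize}

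The \textbf{main obstacle} is that $\sigma(j)$ may differ from $j$, so the ratio $Y_{(j)}/\sigma(j)$ need not be close to $1$. I would write $Y_{\sigma(j)}/\sigma(j)=1+(Y_{\sigma(j)}-\sigma(j))/\sigma(j)$. The error terms are then bounded by $\sum_{j\le k}|Y_{\sigma(j)}-\sigma(j)|/\sigma(j)\le R_n\sum_{i=1}^n 1/i$, since $\sigma$ is injective. The main term is exactly $k$, which cancels against $Y_{(k)}$ up to $R_n$. This gives
\[
|D_{n,k}^{\text{os}}|\le \frac{\theta}{n}\Bigl(1+\sum_{i=1}^n\frac1i\Bigr)R_n .
\]

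Collecting terms, $\E[\eps_n^{\text{os}}]\le 1/n+2M/n+(2/\theta)\,\theta\,(1+H_n)M/n+M/n$, where $H_n=\sum_{j=1}^n 1/j$. This is at most $\bar{\eps}_n$ after checking that the constants combine to $5+2H_n$, and \eqref{eq:PoiFirst} follows.

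For the sub-Gaussian case we avoid the $p$-th moment route and bound $\E\max_k|Y_k-k|$ directly. Integrating the tail bound \eqref{eq:subgaussian_tail} with a union bound over $k\le n$ gives $\E\max_k|Y_k-k|\le C'\sqrt{n\log n}$. Plugging this in place of $M$ yields
\[
\bar{\eps}_n\lesssim\frac{\log n\sqrt{n\log n}}{n}=\frac{(\log n)^{3/2}}{\sqrt n}.
\]
Theorem \ref{thm:randrates_unordered} then gives the stated rate with exponent $\min\{1,\theta\}$, after absorbing the linear term into the power term when $\theta<1$.
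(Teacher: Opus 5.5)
Your proposal follows the paper's proof essentially line by line. You apply Theorem~\ref{thm:randrates_unordered} with the Poisson term vanishing, use sorting-nonexpansiveness, and bound $\delta_n^{\text{os}}\le(1+2\Delta_n)/n$, $|W_{\max}^{(n)}-1|\le\Delta_n/n$ and $(2/\theta)\max_k|D^{\text{os}}_{n,k}|\le(2/n)(1+H_n)\Delta_n$. This gives $\eps_n^{\text{os}}\le\frac1n\bigl(1+\Delta_n(5+2H_n)\bigr)$, and you then bound $\E\Delta_n$ by Jensen (respectively a union bound in the sub-Gaussian case).

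One step is misstated, though. In the discrepancy you bound $|Y_{\sigma(j)}-\sigma(j)|$ by $R_n=\max_k|Y_{(k)}-k|$. But $|Y_{\sigma(j)}-\sigma(j)|=|Y_i-i|$ with $i=\sigma(j)$ is an \emph{unsorted} deviation, and $R_n$ can be strictly smaller. For instance, with $n=2$ and $(Y_1,Y_2)=(2,1)$ one has $R_n=0$ but $D^{\text{os}}_{2,1}=-\theta/4$, so your pathwise bound fails.

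The fix is to work with $\Delta_n:=\max_i|Y_i-i|\ge R_n$ throughout, as the paper does. Your moment bound $M$ and your sub-Gaussian estimate are really bounds on $\E\Delta_n$, so nothing else changes and the constant $5+2H_n$ comes out as claimed.
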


For a proof of Theorem \ref{thm:poisson_app}, see Section \ref{sec:proofpoi}.
\begin{rem}[Comparison with Wasserstein-2 Bounds in \cite{BS25}]\label{rem:comparison}
	It is instructive to compare the bounds in Theorem \ref{thm:poisson_app} with analogous results derived in the Wasserstein-2 distance in \cite[Theorem 1.7]{BS25}. Assuming finite second moments of the weights $Y_k$ (i.e., $p=2$), that work established an upper bound of the order$$\frac{1}{n} + \frac{1}{n^2} \sum_{k=1}^n \frac{\sigma_k^2}{k}.$$
	Our abstract $d_{1,1}$ bound in Theorem \ref{thm:d11_bound} applied to this case matches this order by arguing similarly as in the proof of \cite[Theorem 1.7]{BS25} via the triangle inequality.
	For typical applications where the variance scales linearly, i.e., $\sigma_k^2 = \mathcal{O}(k)$, this bound yields an $\mathcal{O}(1/n)$ convergence rate. In contrast, our Kolmogorov bound in \eqref{eq:PoiFirst} with $p=2$ diverges if we merely assume $\sigma_k^2 = \mathcal{O}(k)$. Even under a stronger sub-Gaussian assumption, the second bound in Theorem \ref{thm:poisson_app} achieves a slower rate of $\mathcal{O}((n^{-1/2} (\log n)^{3/2})^{\min\{1,\theta\}})$.
	
	This discrepancy highlights a fundamental difference between the two distances, especially within the framework of Stein's method. The Wasserstein-2 distance employs twice-differentiable test functions, naturally accommodating Taylor series expansion arguments. However, because the Kolmogorov distance is governed by discontinuous indicator functions, we are forced to conditionally sort the independent weights and bound their absolute maximal deviation $\max_{k \in [n]} |Y_{(k)} - k|$ (see \eqref{eq:sorting_nonexpansive}). While this necessary workaround yields possibly suboptimal rates, the question of whether an optimal $\mathcal{O}(1/n)$ convergence rate is achievable in the Kolmogorov distance remains open.
\end{rem}

	%
%
\section{Proof of main results} The proof of our general abstract bounds in Section \ref{sec:mainres} depend on the application of Stein's method for Dickman approximation, developed in the works \cite{Gold,BG19,BS25}. We first recall in Section \ref{sec:StDickman} some of the results from \cite{BS25} that will be useful for us, and then in Section \ref{sec:1proofs} present the proofs of Theorems \ref{thm:randrates_cond} and \ref{thm:randrates_unordered}.

\subsection{Stein's method for Dickman distribution}\label{sec:StDickman}
Let $p_\theta$ denote the density of $D_\theta$. We will use the fact (see e.g.\ \cite[Section~1]{Pi16b}) that for $\theta>0$, there exists a constant $k_\theta>0$ such that 
$$
p_\theta(x) \le k_\theta \,\text{ for } \; x \ge 1,
$$ 
and in particular, we can take $k_1=e^{-\gamma}$, where $\gamma$ is the Euler-Mascheroni constant. For $\theta>0$ and $s \in \R_+$, denote by $R_\theta(s)$ the function
\begin{equation}\label{eq:R}
	R_\theta(s) = \begin{cases} \max \left\{\frac{e^{-\theta \gamma}}{\Gamma(\theta)},k_\theta\right\}, & \quad  \theta \ge 1,\\  s^{\theta-1}, & \quad \theta \in (0,1). \end{cases}
\end{equation}
The following result allows one to only bound the difference of the distribution functions for $a$ bounded away from $0$ to obtain a bound on the Kolmogorov distance.
\begin{lemma}[\cite{BS25}, Lemma 3.2]\label{lem:lb}
	For $\theta>0$ and a non-negative random variable $S$, assume that there exist $a_0,b\geq 0$ such that
	$$
	\sup_{a\geq a_0} |\Prob{S \leq a} - \Prob{D_\theta \leq a}|\leq b.
	$$
	Then
	$$
	d_K(S,D_\theta) \leq R_\theta(a_0)a_0 + b,
	$$
	where we let $R_\theta(0) \cdot 0 =0$ by convention.
\end{lemma}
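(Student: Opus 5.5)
The statement to prove is Lemma~\ref{lem:lb}: a bound on $|\P(S\le a)-\P(D_\theta\le a)|$ for $a\ge a_0$ should extend to all $a\ge0$ at the cost of $R_\theta(a_0)a_0$. The natural route is to treat small $a$ separately, using monotonicity of distribution functions together with the smallness of $\P(D_\theta\le a_0)$. If $a_0=0$ there is nothing to prove. So fix $a_0>0$ and $a\in[0,a_0)$; since $a\ge a_0$ is covered by hypothesis, it suffices to bound $|\P(S\le a)-\P(D_\theta\le a)|$ for $a<a_0$ by $R_\theta(a_0)a_0+b$.

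\emph{Step 1: upper direction.} Monotonicity and the hypothesis at $a_0$ give
\[
\P(S\le a)-\P(D_\theta\le a)\le \P(S\le a_0)\le \P(D_\theta\le a_0)+b .
\]

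\emph{Step 2: lower direction.} Here
\[
\P(D_\theta\le a)-\P(S\le a)\le \P(D_\theta\le a)\le \P(D_\theta\le a_0).
\]
Both directions are therefore controlled by $\P(D_\theta\le a_0)+b$, and it remains to show $\P(D_\theta\le a_0)\le R_\theta(a_0)a_0$.

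\emph{Step 3: the Dickman distribution function near zero.} This is the only substantive step. I would use the classical fact that the density satisfies $p_\theta(x)=\frac{e^{-\theta\gamma}}{\Gamma(\theta)}x^{\theta-1}$ for $x\in[0,1]$; this follows from the fixed-point equation or from the Laplace transform $\exp\!\big(-\theta\int_0^1(1-e^{-sx})x^{-1}\,dx\big)$.
\begin{itemize}
\item Case $\theta\ge1$: on $[0,1]$ we have $p_\theta\le e^{-\theta\gamma}/\Gamma(\theta)$, and on $[1,\infty)$ we have $p_\theta\le k_\theta$. Hence $p_\theta\le R_\theta$ everywhere, and $\P(D_\theta\le a_0)\le R_\theta a_0$.
\item Case $\theta\in(0,1)$ with $a_0\le1$: integrate the explicit density to get $\P(D_\theta\le a_0)=\frac{e^{-\theta\gamma}}{\Gamma(\theta+1)}a_0^\theta$. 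It remains to check $e^{-\theta\gamma}/\Gamma(\theta+1)\le1$ on $(0,1)$. Both sides equal $1$ at $\theta=0$, and $\log\Gamma(1+\theta)+\theta\gamma$ is convex with zero derivative at $0$, since $\psi(1)=-\gamma$. So it is nonnegative and the inequality holds, giving a bound of $a_0^{\theta}=R_\theta(a_0)a_0$.
\item Case $\theta\in(0,1)$ with $a_0>1$: then $R_\theta(a_0)a_0=a_0^\theta>1$, which dominates any difference of probabilities, so the claim is trivial.
\end{itemize}

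I expect Step~3 to be the main obstacle. Specifically, it requires the explicit form of $p_\theta$ on $[0,1]$ and the verification that the constant $e^{-\theta\gamma}/\Gamma(\theta+1)$ is at most $1$. Everything else is monotonicity of distribution functions.
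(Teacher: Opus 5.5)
Your proof is correct and follows the standard argument behind the cited lemma (the paper itself only cites it and gives no proof): for $a<a_0$, monotonicity of the two distribution functions reduces everything to $\P(D_\theta\le a_0)\le R_\theta(a_0)a_0$, which you get from the explicit density $e^{-\theta\gamma}x^{\theta-1}/\Gamma(\theta)$ on $[0,1]$ together with $p_\theta\le k_\theta$ on $[1,\infty)$. One small simplification: the convexity argument for $e^{-\theta\gamma}/\Gamma(\theta+1)\le 1$ is unnecessary, since by your own integration this constant equals $\P(D_\theta\le 1)$ and is therefore automatically at most $1$.
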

To bound the discrepancy $\Prob{S \leq a} - \Prob{D_\theta \leq a}$, we utilize Stein's method. By \cite[Lemma 3.9]{Gold}, a non-negative random variable $S$ has the distribution $\mathcal{D}_\theta$ if and only if 
$$
\E \left[ \frac{S}{\theta}f'(S) + f(S) - f(S+1) \right] = 0
$$ 
for all Lipschitz functions $f$. For a fixed $a \ge 0$, in order to bound $|\Prob{S \leq a} - \Prob{D_\theta \leq a}|$, we thus consider the test function $h_a(x) = \ind(x \le a)$, $x \ge 0$ and the corresponding Stein equation
\begin{equation} \label{eq:stein}
	(x/\theta) f'(x) + f(x) - f(x+1) = \ind(x \le a) - \P(D_\theta \le a), \quad x \ge 0.
\end{equation}

By \cite[Theorem 1.10]{BS25}, there exists a solution $f_a$ to the delay-differential equation \eqref{eq:stein} that can be decomposed into well-behaved functions. 

\begin{lemma}[\cite{BS25}, Theorem 1.10]\label{lem:cont}
	Let $\theta>0$. For $a \ge 0$, there exists a solution $f_a$ to \eqref{eq:stein} that can be expressed as a sum $f_a=f_1+f_2$, where 
	$$
	f_1 = \min\{1,a^\theta/x^\theta\} - \Prob{D_\theta \le a}, \quad \text{with} \quad f_1'(x) = -\theta a^\theta \ind(x>a)/x^{\theta+1}, \quad x \ge 0
	$$
	and $f_2$ is a differentiable function on $\R_+$ satisfying $f_2' = u_+ - u_-$ with $u_+, u_-$ being non-negative, non-increasing functions bounded by $\theta^2$. In particular,
	$$
	f_a'= (g_1 + u_+) - (g_2 + u_-),
	$$
	where $g_1(x) = \frac{\theta}{a} \ind(x \le a)$ and $g_2(x) = \frac{\theta}{a} \ind(x \le a) + \frac{\theta a^\theta}{x^{\theta+1}} \ind(x > a)$ are also non-increasing and non-negative functions on $\R_+$.
\end{lemma}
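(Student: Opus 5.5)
The plan is to guess $f_1$ explicitly, verify by direct substitution that it absorbs the discontinuity of $h_a$, and then show that the remaining part $f_2$ solves a Stein equation with a \emph{continuous, monotone, Lipschitz} right-hand side. For such data the derivative of the solution should be controlled by the delay-differential structure of \eqref{eq:stein}.

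\textbf{Step 1: the explicit part $f_1$.} Plug $f_1(x)=\min\{1,a^\theta/x^\theta\}-\Prob{D_\theta\le a}$ into the Stein operator $(x/\theta)f'(x)+f(x)-f(x+1)$.
\begin{itemize}
\item For $x\le a$ we have $f_1'=0$, and the operator gives $1-\min\{1,(a/(x+1))^\theta\}$.
\item For $x>a$ the term $(x/\theta)f_1'(x)=-a^\theta/x^\theta$ cancels $f_1(x)+\Prob{D_\theta\le a}$, and the operator gives $-(a/(x+1))^\theta$.
\end{itemize}
Subtracting from $\ind(x\le a)-\Prob{D_\theta\le a}$, the residual is
$$
r(x)=\min\{1,(a/(x+1))^\theta\}-\Prob{D_\theta\le a}.
$$
This is continuous and non-increasing, and $|r'(x)|\le\theta a^\theta(x+1)^{-\theta-1}\ind(x>a-1)\le\theta$. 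We then let $f_2$ be the solution of \eqref{eq:stein} with right-hand side $r$; by linearity $f_a=f_1+f_2$. The formula for $f_1'$, and the fact that $g_1,g_2$ are non-negative and non-increasing, are immediate. Note that $f_1'=g_1-g_2$, where the indicator part $\frac{\theta}{a}\ind(x\le a)$ is added and subtracted only to make $g_2$ monotone across $x=a$.

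\textbf{Step 2: the equation for $g=f_2'$.} Take the solution $f_2$ given by the standard integral representation (as in \cite{Gold,BG19}). For Lipschitz data it is differentiable. Differentiating its Stein equation yields
$$
(x/\theta)g'(x)+(1+1/\theta)g(x)-g(x+1)=r'(x)=-\varphi(x),\qquad \varphi\ge0 .
$$
Here $\varphi(x)=\theta a^\theta(x+1)^{-\theta-1}\ind(x>a-1)$ is non-negative and bounded by $\theta$. It is non-increasing except for a single upward jump at $x=a-1$.

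We solve this by the variation-of-constants formula: with $v=x^{\theta+1}g$, the equation becomes
$$
v'(x)=\theta x^\theta\big(g(x+1)-\varphi(x)\big).
$$
Equivalently, $g$ can be written probabilistically as an expectation of $\varphi$ along the backward chain $x\mapsto U^{1/(\theta+1)}x$ with unit jumps, mirroring the fixed-point representation of $D_\theta$. This expresses $g=\mathcal{K}\varphi$ for an explicit linear operator $\mathcal{K}$. Using the sup-norm bounds of \cite{BG19}, one gets $\|g\|_\infty\le\theta\|r'\|_\infty\le\theta^2$.

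\textbf{Step 3: splitting $g$ into monotone pieces (the main obstacle).} The remaining task is to produce $u_\pm\ge0$, non-increasing and bounded by $\theta^2$, with $g=u_+-u_-$.

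A bounded-variation argument alone does not give monotone pieces with the right sup bound. The first approach I would try is to decompose $\varphi$ itself as a superposition of step functions: write
$$
\varphi=\int \ind(\cdot\le s)\,\mu_1(\dd s)-\int \ind(\cdot\le s)\,\mu_2(\dd s)
$$
for non-negative measures $\mu_1,\mu_2$, where $\mu_2$ handles the single jump at $a-1$. One then checks that $\mathcal{K}$ maps each indicator $\ind(\cdot\le s)$ to a non-negative, non-increasing function. Setting $u_\pm=\mathcal{K}(\cdots)$ applied to the corresponding pieces then gives the monotone decomposition, and the bound $\theta^2$ comes from the total masses together with $\|\mathcal{K}\ind(\cdot\le s)\|_\infty\le\theta$.

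Verifying that $\mathcal{K}$ preserves the cone of non-increasing non-negative functions requires a careful look at the delay term $g(x+1)$. I would attempt it by induction over the intervals $[m,m+1)$, solving the equation successively from the right as is standard for Dickman-type delay equations. This is the step I expect to be the hardest.
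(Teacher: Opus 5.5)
Your Steps 1 and 2 are correct. The residual $r(x)=\min\{1,(a/(x+1))^\theta\}-\Prob{D_\theta\le a}$ is right. It is centred under $D_\theta$, which is exactly the fixed-point identity $\Prob{U^{1/\theta}(D_\theta+1)\le a}=\Prob{D_\theta\le a}$. You also have $|r'|\le\theta$, and your variation-of-constants formula is right.

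The paper does not prove this lemma; it imports it from \cite{BS25}. So the question is whether your argument closes on its own, and as written it does not. Step 3, where the real content of the lemma lies, is left open, and the route you propose for it would not go through. Write your integral form as $g=Ag-B\varphi$, where
\[
A\psi(x)=\theta\int_0^1 s^\theta\,\psi(xs+1)\,\dd s,\qquad B\psi(x)=\theta\int_0^1 s^\theta\,\psi(xs)\,\dd s .
\]
Then the values of $g$ on $[m,m+1)$ depend on $g$ over all of $[1,m+2)$. The delay is an advance, so there is no interval, at either end, from which an interval-by-interval induction could start. There is also a sign slip: with $\varphi\ge0$ one has $g\le0$, so $g=-\mathcal K\varphi$ for a \emph{positive} operator $\mathcal K$, not $g=\mathcal K\varphi$.

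The missing observation is that cone preservation is immediate from the kernel itself. The maps $x\mapsto xs$ and $x\mapsto xs+1$ are non-decreasing; equivalently, your chain $X_{k+1}=X_kV_{k+1}+1$ is pathwise monotone in its starting point. Hence $A$ and $B$ are positive, map non-increasing functions to non-increasing functions, and both have operator sup-norm at most $\theta/(\theta+1)$.

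Since $A$ is a contraction, the bounded solution is the Neumann series $g=-\mathcal K\varphi$ with $\mathcal K=\sum_{k\ge0}A^kB$. This $\mathcal K$ sends non-negative non-increasing $\psi$ to non-negative non-increasing functions bounded by $\theta\|\psi\|_\infty$.

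You do not need a superposition of indicators; a two-piece split suffices. Write $\varphi=\varphi_1-\varphi_2$ with $\varphi_1(x)=\theta a^\theta\max\{x+1,a\}^{-\theta-1}$ and $\varphi_2(x)=\frac{\theta}{a}\ind(x\le a-1)$. Both are non-negative, non-increasing and bounded by $\theta$ on $\R_+$. Put $u_-=\mathcal K\varphi_1$ and $u_+=\mathcal K\varphi_2$; these are non-negative, non-increasing, bounded by $\theta^2$, and $g=u_+-u_-$.

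Finally, it is cleaner to construct $f_2(x)=\int_0^x g$ directly than to differentiate a solution taken from \cite{BG19}. Then $(x/\theta)f_2'(x)+f_2(x)-f_2(x+1)-r(x)$ is absolutely continuous with a.e.\ vanishing derivative, hence constant. The constant is $0$ because $\E[r(D_\theta)]=0$, and because $f_2$ is Lipschitz, so the Stein operator applied to $f_2$ has mean zero under $D_\theta$ by \cite[Lemma 3.9]{Gold}.
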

We remark that, by convention, we set $f_1'(a) = 0$ to ensure that $f_a$ satisfies the Stein equation \eqref{eq:stein} pointwise for every $x \ge 0$. 
We will also need an estimate of the expectation $\E [g_1(S)]$ and $\E [g_2(S)]$ for our proofs. The following lemma yields the necessary bounds.

\begin{lemma}[\cite{BS25}, Lemma 3.3]\label{lem:mom}
	Let $\theta>0$ and let $W$ be a non-negative random variable. \begin{enumerate}
		\item For $\alpha>0$ and $u \ge 0$,
		$$
		\Prob{S + u \leq \alpha} \leq \left[R_\theta(\alpha)\alpha+d_K(S,D_\theta) \right] \mathds{1}(\alpha \ge u).
		$$
		\item For $t>0$, $\alpha>0$ and $u \ge 0$,
		$$
		\E \left[\frac{\alpha^t}{(S+u)^{1+t}}\mathds{1}(S +u>\alpha)\right] \le \zeta(1+t)	\left( R_\theta (\max\{\alpha, u\}) + \frac{2d_K(S,D_\theta)}{\max\{\alpha, u\}}\right),
		$$
		where $\zeta(s)=\sum_{m=1}^\infty m^{-s}$ for $s >1$ is the Riemann zeta function. \end{enumerate}
\end{lemma}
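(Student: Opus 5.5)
The statement just above is Lemma \ref{lem:mom}, quoted from \cite{BS25}. I would prove it by comparing $S$ with $D_\theta$ through the Kolmogorov distance and then estimating the resulting probabilities for $D_\theta$ from its density. (The statement names the variable $W$ but uses $S$ throughout; I read them as the same non-negative random variable.)

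\emph{Part 1.} If $\alpha<u$, then $S+u\le\alpha$ is impossible because $S\ge0$, so the probability is zero and matches the indicator. If $\alpha\ge u$, I would write
$$\Prob{S+u\le\alpha}\le\Prob{S\le\alpha}\le \Prob{D_\theta\le\alpha}+d_K(S,D_\theta).$$
It then remains to show $\Prob{D_\theta\le \alpha}\le R_\theta(\alpha)\alpha$.
\begin{itemize}
\item For $\theta\in(0,1)$, I would use the explicit density on $[0,1]$, namely $p_\theta(x)=e^{-\theta\gamma}x^{\theta-1}/\Gamma(\theta)$. Integrating gives $\Prob{D_\theta\le\alpha}=e^{-\theta\gamma}\alpha^\theta/\Gamma(\theta+1)\le\alpha^\theta=R_\theta(\alpha)\alpha$, provided the constant $e^{-\theta\gamma}/\Gamma(\theta+1)$ is at most $1$; I would check this separately. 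For $\alpha>1$ the bound is trivial, since then $\alpha^\theta\ge1$.
\item For $\theta\ge1$, the density on $[0,1]$ is bounded by $e^{-\theta\gamma}/\Gamma(\theta)$, and on $[1,\infty)$ it is bounded by $k_\theta$. Hence $p_\theta\le R_\theta$ everywhere, and integrating gives the bound.
\end{itemize}

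\emph{Part 2.} Set $\beta=\max\{\alpha,u\}$. On the event $S+u>\alpha$ we also have $S+u\ge u$, so in fact $S+u>\beta$ up to the boundary case $S+u=u$, which is handled by using the weaker inequality $\ge\beta$. I would decompose into dyadic-type shells $S+u\in(m\beta,(m+1)\beta]$ for $m\ge1$. On the $m$-th shell the integrand is at most
$$\frac{\alpha^t}{(m\beta)^{1+t}}\le\frac{1}{\beta\, m^{1+t}},$$
using $\alpha\le\beta$. Each shell probability is at most $\Prob{D_\theta\in(m\beta-u,(m+1)\beta-u]}+2d_K(S,D_\theta)$. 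For $\theta\ge1$, the first term is at most $\beta\sup p_\theta\le\beta R_\theta$. Summing over $m$ gives
$$\sum_{m\ge1} m^{-1-t}\bigl(R_\theta+2d_K/\beta\bigr)=\zeta(1+t)\bigl(R_\theta+2d_K/\beta\bigr).$$

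\emph{Main obstacle.} The hard case is $\theta<1$, where $p_\theta$ blows up at $0$. There the shell probability for $D_\theta$ over an interval of length $\beta$ is at most $\int_0^\beta p_\theta\le\beta^\theta=\beta R_\theta(\beta)$, because the density is decreasing near zero and bounded elsewhere. This shows the shell bound $\beta R_\theta(\beta)$ still holds and completes the argument. The boundary term $S+u=\beta$ exactly can be absorbed into the first shell by using closed intervals.
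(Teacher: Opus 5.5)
The structure of your argument works. You compare with $D_\theta$ at a cost of at most $2d_K$ per interval, bound $D_\theta$-probabilities through the density, and sum over the shells $[m\beta,(m+1)\beta)$ to produce $\zeta(1+t)$. That is the natural proof of a lemma the paper only quotes from \cite{BS25}, so there is no in-paper proof to compare against. The check you deferred in Part 1 is immediate, because $e^{-\theta\gamma}/\Gamma(\theta+1)=\P(D_\theta\le 1)\le 1$.

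There is one real gap, and it is in the step you yourself flag as the main obstacle. For $\theta\in(0,1)$ you need $\P(D_\theta\in I)\le\beta^\theta$ for every interval $I\subset[0,\infty)$ of length $\beta$. ``Decreasing near zero and bounded elsewhere'' does not give this. The only information available on $[1,\infty)$ is $p_\theta\le k_\theta$ with an unspecified constant, which yields $\P(D_\theta\in I)\le k_\theta\beta$. That is at most $\beta^\theta$ for $\beta$ close to $1$ only if you know something like $k_\theta\le 1$, and you would then still have to treat intervals straddling $x=1$.

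The missing ingredient is that for $\theta\le 1$ the density $p_\theta$ is non-increasing on all of $(0,\infty)$. To see this, plug $f(x)=\int_0^x g$ into the Stein characterization $\E[(D_\theta/\theta)f'(D_\theta)+f(D_\theta)-f(D_\theta+1)]=0$. This gives $x\,p_\theta(x)=\theta\,\P(x-1<D_\theta\le x)$. In particular $p_\theta$ is continuous on $(0,\infty)$, and differentiating gives
\[
x\,p_\theta'(x)=(\theta-1)\,p_\theta(x)-\theta\,p_\theta(x-1)\le 0,\qquad x\in(0,1)\cup(1,\infty).
\]
With this, for every $a\ge 0$ your Part 1 computation gives
\[
\P\bigl(D_\theta\in[a,a+\beta)\bigr)\le\P(D_\theta\le\beta)\le\beta^\theta=\beta R_\theta(\beta),
\]
and the rest of your shell argument goes through unchanged.

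Alternatively, the same identity gives the explicit bound $p_\theta(x)\le\theta/x\le\theta$ on $[1,\infty)$. You can then split $I$ at $1$ into pieces of lengths $\ell_1,\ell_2$ and use $(\ell_1+\ell_2)^\theta\ge\ell_1^\theta+\theta\ell_2$ whenever $\ell_1+\ell_2<1$, which rescues your ``bounded elsewhere'' route.
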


Let $g_1$ and $g_2$ be as in Lemma \ref{lem:cont} for some $a>0$. Note that we always have that $g_1 \le g_2$. Also, for any non-negative $S$, taking $u=0$ and $\alpha=a > 0$ in Lemma \ref{lem:mom}, we obtain
\begin{equation}\label{eq:Eg1bd}
	\E [g_1(S)] \le  \theta R_\theta(a) + \frac{\theta d_K(S,D_\theta)}{a},
\end{equation}
and
\begin{equation}\label{eq:Eg2bd}
	\E [g_2(S)] \le \theta R_\theta(a) + \frac{\theta d_K(S,D_\theta)}{a} + \theta \zeta(1+\theta)	\left( R_\theta(a) + \frac{2d_K(S,D_\theta)}{a}\right).
\end{equation}

For $\theta>0$, to bound the $d_{1,1}(S,D_\theta)$ distance for a non-negative random variable $S$, one utilizes the generalized Dickman Stein equation
	\begin{equation}\label{eq:stein_eq_d11}
		\frac{x}{\theta}f_h'(x) + f_h(x) - f_h(x+1) = h(x) - \E[h(D_\theta)].
	\end{equation}
The following result from \cite{BG19} guarantees the existence of a smooth solution to \eqref{eq:stein_eq_d11} for $h \in \mathcal{H}_{1,1}$. Below, we denote by $\|\cdot\|_\infty$ the supremum norm.
\begin{theorem}[\cite{BG19}, Theorem 4.9]\label{thm:d11solbd}
	For any $h \in \mathcal{H}_{1,1}$, there exists a solution $f_h\in \mathcal{H}_{\theta,\theta/2}$ to \eqref{eq:stein_eq_d11} with $\|f_h'\|_\infty \le \theta$ and $\|f_h''\|_\infty \le \theta/2$.
\end{theorem}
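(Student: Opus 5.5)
The plan is to construct the solution through its derivative. Differentiating \eqref{eq:stein_eq_d11} formally and setting $g = f_h'$ gives
$$\frac{x}{\theta} g'(x) + \Big(1+\frac1\theta\Big) g(x) - g(x+1) = h'(x).$$
Multiplying by $\theta x^{\theta}$ turns the left-hand side into $\frac{d}{dx}\big(x^{\theta+1} g(x)\big) - \theta x^\theta g(x+1)$. Integrating from $0$ (so that $x^{\theta+1}g(x)\to 0$ at the origin) then gives
$$g(x) = \frac{\theta}{x^{\theta+1}}\int_0^x u^\theta\big(h'(u)+g(u+1)\big)\,du = \frac{\theta}{\theta+1}\,\E\big[h'(xV)+g(xV+1)\big],$$
where $V$ has density $(\theta+1)v^\theta$ on $[0,1]$. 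I would therefore define the operator
$$(Tg)(x) := \frac{\theta}{\theta+1}\,\E[h'(xV)+g(xV+1)]$$
on bounded measurable functions on $\R_+$ and look for a fixed point $g$ of $T$.

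For the first-derivative bound, note that $T$ is a contraction in $\|\cdot\|_\infty$ with constant $\theta/(\theta+1)<1$. So Banach's theorem gives a unique bounded fixed point $g$, and it satisfies
$$\|g\|_\infty \le \frac{\theta}{\theta+1}\big(\|h'\|_\infty+\|g\|_\infty\big) \le \frac{\theta}{\theta+1}\big(1+\|g\|_\infty\big),$$
hence $\|g\|_\infty\le\theta$. For the second-derivative bound, $h'$ is only Lipschitz, so I would work with Lipschitz constants rather than $h''$. For $x,y\ge 0$,
$$|Tg(x)-Tg(y)|\le \frac{\theta}{\theta+1}\,\E[V]\,\big(\mathrm{Lip}(h')+\mathrm{Lip}(g)\big)|x-y|,$$
and $\E V = (\theta+1)/(\theta+2)$, so $\mathrm{Lip}(Tg)\le \frac{\theta}{\theta+2}(1+\mathrm{Lip}(g))$. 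Running the Picard iteration $g_0=0$, $g_{m+1}=Tg_m$, induction gives $\mathrm{Lip}(g_m)\le \theta/2$ for all $m$, because $\frac{\theta}{\theta+2}(1+\theta/2)=\theta/2$. The uniform limit $g$ inherits $\mathrm{Lip}(g)\le\theta/2$. In particular $g$ is continuous, and the integral representation shows that $x^{\theta+1}g(x)$ is $C^1$ on $(0,\infty)$, so $g$ satisfies the differentiated equation for $x>0$.

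Next I would set $f_h(x) := \int_0^x g(u)\,du$. By the previous step, $f_h\in\mathcal H_{\theta,\theta/2}$ with $\|f_h'\|_\infty\le\theta$ and $\|f_h''\|_\infty\le\theta/2$, the latter understood almost everywhere via the Lipschitz constant. It remains to recover the undifferentiated equation. Define
$$\Phi(x) := \frac{x}{\theta}g(x)+f_h(x)-f_h(x+1)-h(x).$$
This function is locally absolutely continuous, and its derivative is
$$\frac1\theta\big(xg(x)\big)' + g(x)-g(x+1)-h'(x) = 0$$
almost everywhere by the differentiated equation, so $\Phi\equiv c$ is constant. To identify $c$, I would apply the characterization of $\mathcal D_\theta$ quoted before \eqref{eq:stein} (from \cite[Lemma 3.9]{Gold}) to the Lipschitz function $f_h$: it gives $\E[(D_\theta/\theta)f_h'(D_\theta)+f_h(D_\theta)-f_h(D_\theta+1)]=0$. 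Taking expectations in $\Phi(D_\theta)=c$ then yields $c=-\E h(D_\theta)$, which is exactly \eqref{eq:stein_eq_d11}. Integrability is not an issue, since $f_h$ and $h$ grow at most linearly and $D_\theta$ has finite mean.

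I expect the main obstacle to be the regularity near the origin and the justification of the differentiated equation when $h$ is only $C^{1}$ with Lipschitz derivative. Specifically, one must check that $x g(x)$ is absolutely continuous with the claimed derivative and that no boundary term at $x=0$ is lost in the integration. The representation $x^{\theta+1}g(x)=\theta\int_0^x u^\theta(\cdots)\,du$ with a bounded integrand handles this: it shows that $x^{\theta+1}g(x)$ is $C^1$ and vanishes at $0$, and since $g$ is Lipschitz, $xg(x)$ is locally Lipschitz with derivative $g+xg'$ almost everywhere. The remaining steps are routine contraction estimates.
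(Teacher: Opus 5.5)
The paper gives no proof of this statement. It is quoted from \cite{BG19} and used only as a black box in the proof of Theorem~\ref{thm:d11_bound}, so there is no internal argument to compare against, and your proposal has to stand on its own. It does: it is a correct, self-contained proof. Each step checks out:
the integral form $g(x)=\theta\int_0^1 v^\theta\big(h'(xv)+g(xv+1)\big)\,\dd v$ of the differentiated equation;
the sup-norm contraction with ratio $\theta/(\theta+1)$;
the invariance of $\{\mathrm{Lip}(g)\le\theta/2\}$ under $T$, where $\theta/2$ is exactly the fixed point of $L\mapsto\frac{\theta}{\theta+2}(1+L)$.
The argument also shows where the constants come from: $\theta=\sum_{k\ge1}\big(\frac{\theta}{\theta+1}\big)^k$ and $\frac{\theta}{2}=\sum_{k\ge1}\big(\frac{\theta}{\theta+2}\big)^k$. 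Citing \cite{BG19} buys the paper brevity; your route removes the dependence on that external framework.

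There are three small points you can tighten.

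First, the regularity is better than you claim. The map $u\mapsto u^\theta\big(h'(u)+g(u+1)\big)$ is continuous, so the representation $g(x)=\theta x^{-\theta-1}\int_0^x u^\theta\big(h'(u)+g(u+1)\big)\,\dd u$ shows $g\in C^1(0,\infty)$. Hence the differentiated equation holds at every $x>0$, $\Phi$ is $C^1$ on $(0,\infty)$ and continuous at $0$, and $|f_h''|\le\theta/2$ holds pointwise on $(0,\infty)$. No almost-everywhere qualifications are needed.

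Second, $\mathcal{H}_{\theta,\theta/2}$ consists of functions on $\R$, so you should extend $f_h$ to $(-\infty,0)$ linearly, e.g.\ $f_h(x)=g(0)x$. This preserves both bounds, and the Stein equation is only needed for $x\ge0$.

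Third, your only external input is the identity $\E[(D_\theta/\theta)f'(D_\theta)+f(D_\theta)-f(D_\theta+1)]=0$ for Lipschitz $f$. This is the easy direction of the characterization in \cite{Gold}. It follows from $D_\theta\stackrel{d}{=}U^{1/\theta}(D_\theta+1)$ by integrating by parts against the density $\theta v^{\theta-1}$ of $U^{1/\theta}$. So your argument needs nothing beyond the fixed-point definition of $\mathcal{D}_\theta$.
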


\subsection{Pointwise approximation of the distribution function}\label{sec:1proofs} Our main result Theorem \ref{thm:randrates_cond} is a consequence of the following pointwise bound on the difference of the distribution functions of $S_n$ and $D_\theta$.
\begin{proposition}\label{prop:master}
	Let $S_n$ be as in \eqref{eq:S_n} satisfying  assumptions \ref{a2}--\ref{a1}. Then for any $a, \theta > 0$, $\eps_n (\mathbf{W})$ as in Theorem \ref{thm:randrates_cond} and $g_2(x) = \frac{\theta}{a} \ind(x \le a) + \frac{\theta a^\theta}{x^{\theta+1}} \ind(x > a)$ as in Lemma \ref{lem:cont},
	$$
	|\P(S_n \le a \mid \mathbf{W}) - \P(D_\theta \le a)| \le \eps_n (\mathbf{W}) \E \Big[ \Big( \theta^2 + g_2(S_n) \Big) \mid \mathbf{W}\Big].
	$$
\end{proposition}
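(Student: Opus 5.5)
We work conditionally on $\mathbf{W}$ throughout and write $\E_{\mathbf W}$ for $\E[\,\cdot\mid\mathbf W]$. Let $f_a$ be the solution of \eqref{eq:stein} from Lemma \ref{lem:cont}. Evaluating \eqref{eq:stein} at $S_n$ and taking conditional expectations gives
\[
\P(S_n\le a\mid\mathbf W)-\P(D_\theta\le a)=\E_{\mathbf W}\Big[\tfrac{S_n}{\theta}f_a'(S_n)+f_a(S_n)-f_a(S_n+1)\Big].
\]
We first treat the term $\E_{\mathbf W}[S_nf_a'(S_n)]$ by size-biasing. By \ref{a1} and \eqref{eq:sb},
\[
\E_{\mathbf W}[S_nf_a'(S_n)]=\sum_k W_{n,k}\lambda_k(\mathbf W)\,\E_{\mathbf W}\big[f_a'(S_n^{(k)}+W_{n,k}X_k^*)\big],
\]
where $S_n^{(k)}=S_n-W_{n,k}X_k$. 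We then replace $X_k^*$ by $X_k+1$. To control the error, we use a maximal coupling of $X_k^*$ and $X_k+1$ together with the decomposition $f_a'=(g_1+u_+)-(g_2+u_-)$ into non-negative, non-increasing pieces. For a non-increasing non-negative $g$, the discrepancy is bounded by $d_{TV}$ times $\sup_x g(S_n^{(k)}+W_{n,k}x)$, and this supremum is attained at the smallest attainable value. We then convert back to an expectation of $g$ at $S_n^{(k)}+W_{n,k}X_k=S_n$ (plus a nonnegative shift, where monotonicity helps) by dividing by $\P(X_k=x_{\min}^{(k)}\mid\mathbf W)$. This produces the third term of $\eps_n(\mathbf W)$ multiplied by $\E_{\mathbf W}[\theta^2+g_2(S_n)]$, since $u_\pm\le\theta^2$ and $g_1\le g_2$. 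The factor $2/\theta$ arises from the two monotone parts and the $1/\theta$ in the equation.

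Next we compare $\frac1\theta\sum_k\lambda_kW_{n,k}\E_{\mathbf W} f_a'(S_n+W_{n,k})$ with $f_a(S_n+1)-f_a(S_n)=\int_0^1 f_a'(S_n+t)\,dt$. We split the integral as $\int_0^{W_{\max}^{(n)}}+\int_{W_{\max}^{(n)}}^1$. The second piece is bounded by $|W_{\max}^{(n)}-1|\,\E_{\mathbf W}[\theta^2+g_2(S_n)]$, again by monotonicity, bounding each part by its value at the left endpoint $S_n$. The first piece equals $\sum_k\int_{W_{n,k-1}}^{W_{n,k}}f_a'(S_n+t)\,dt$. We compare it with $\frac1\theta\sum_k\Delta_{n,k}f_a'(S_n+W_{n,k})$; the error is controlled by $\delta_n$ via the monotone pieces, because for non-increasing $g$ the Riemann sum with right endpoints is sandwiched, and telescoping gives at most $\delta_n g(S_n)$.

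The remaining difference, $\frac1\theta\sum_k(\lambda_kW_{n,k}-\Delta_{n,k})\,g(S_n+W_{n,k})$ for each monotone piece $g$, is handled by Abel summation using the partial sums $D_{n,k}$. Writing $g(S_n+W_{n,k})=g(S_n+W_{n,n})+\sum_{j\ge k}[g(S_n+W_{n,j})-g(S_n+W_{n,j+1})]$, with non-negative increments because the weights are ordered by \ref{a2} and $g$ is non-increasing, yields a bound of $\frac1\theta D_n^*\cdot 2g(S_n)$, the total variation of $g$ on the range being at most $g(S_n)$. Summing over the two signs gives the $\frac2\theta D_n^*$ term. Collecting all contributions yields the proposition with $\eps_n(\mathbf W)$ as defined.

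The main obstacle is the size-bias step. Here $X_k$ and $S_n^{(k)}$ are conditionally independent, but the supremum over $x$ must be converted back into an expectation at the actual $S_n$. The natural route is $g(S_n^{(k)}+W_{n,k}x_{\min})\,\P(X_k=x_{\min}\mid\mathbf W)\le\E_{\mathbf W}[g(S_n)\mid S_n^{(k)}]$, with care that the coupling only shifts arguments upward.
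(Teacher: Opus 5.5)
Your decomposition is the paper's (Theorem \ref{thm:main_bound} and Lemma \ref{lem:bound_TE}): a size-bias error, an Abel-summed discrepancy, a right-endpoint Riemann error, and the tail $\int_{W_{\max}^{(n)}}^1$, each handled through the monotone pieces of Lemma \ref{lem:cont}. Your size-bias step, including the conversion via \eqref{eq:Snk_bound}, is correct. The problem is the constant bookkeeping, which matters because the proposition is stated with the explicit $\eps_n(\mathbf W)$. In the Abel step you bound the boundary term $D_{n,n}g(S_n+W_{n,n})$ and the increment sum separately, each by $D_n^*g(S_n)$. That is $2D_n^*g(S_n)$ per monotone piece. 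Summed over the two pieces $G_+:=g_1+u_+$ and $G_-:=g_2+u_-$, this gives $\frac{2}{\theta}D_n^*(G_++G_-)(S_n)$. Since the only available bounds are $u_\pm\le\theta^2$ and $g_1\le g_2$, you only get $\frac{4}{\theta}D_n^*\,\E_{\mathbf W}[\theta^2+g_2(S_n)]$, not the claimed $\frac{2}{\theta}D_n^*$. You also cannot replace the sum over the pieces by a maximum here, because the $D_{n,k}$ have no fixed sign. The repair is not to split. The quantities $g(S_n+W_{n,n})$ and $g(S_n+W_{n,j})-g(S_n+W_{n,j+1})$ are all non-negative, so after bounding each $|D_{n,j}|$ by $D_n^*$, what remains telescopes to $g(S_n+W_{n,1})\le g(S_n)$. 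That costs $D_n^*g(S_n)$ per piece and $2D_n^*\,\E_{\mathbf W}[\theta^2+g_2(S_n)]$ in total, as in the paper.

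The same issue affects the $\delta_n$ and tail terms. There you need coefficient $1$, but the triangle inequality over the two pieces, which you use for the size-bias and discrepancy terms, yields $2$. For the Riemann error, the missing observation is one-sidedness, which your word ``sandwiched'' hints at but never uses. For each non-negative non-increasing $G$, the error $\sum_k\int_{W_{n,k-1}}^{W_{n,k}}[G(S_n+t)-G(S_n+W_{n,k})]\,dt$ lies in $[0,\delta_nG(S_n)]$. So for $f_a'=G_+-G_-$ the error is a difference of two non-negative numbers, bounded by their maximum, hence by $\delta_n\max\{G_+,G_-\}(S_n)\le\delta_n(\theta^2+g_2(S_n))$. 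For the tail, use the pointwise bound $|f_a'|\le\max\{G_+,G_-\}\le\theta^2+g_2$ together with the monotonicity of $g_2$, rather than bounding the two parts separately. With these three corrections your argument gives exactly $\eps_n(\mathbf W)\,\E_{\mathbf W}[\theta^2+g_2(S_n)]$.
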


To prove Proposition \ref{prop:master}, we first decompose the difference of distribution functions in four different parts in Theorem \ref{thm:main_bound}, and then estimate each of them individually in Section \ref{sec:pathwisebd}. Recall the definition of $(X_k^* | \mathbf{W})$ from \eqref{eq:sb}. By the maximal coupling theorem, given $\mathbf{W}$, there exists a coupling $(X_k, X_k^*)$ for which 
\begin{equation}\label{eq:optcoup}
	\P(X_k^* \neq X_k + 1 \mid \mathbf{W}) = d_{TV}(X_k^*, X_k + 1 \mid \mathbf{W}).
\end{equation}
Below, we always consider $(X_k, X_k^*)$ to have this optimal coupling conditionally. Let $S_n^{(k)} = S_n - W_{n,k}X_k$ and define
\begin{align}\label{eq:t14_rand}
	T_1(g \mid \mathbf{W}) &=\sum_{k=1}^n \lambda_k(\mathbf{W}) W_{n,k}  \E \left[ g(S_n^{(k)} + W_{n,k} X_k^*) - g(S_n + W_{n,k}) \;\middle|\; \mathbf{W} \right],\nonumber\\
	T_2(g \mid \mathbf{W}) &= \sum_{k=1}^n (\lambda_k(\mathbf{W}) W_{n,k} - \Delta_{n,k}) \E \left[ g(S_n + W_{n,k}) \;\middle|\; \mathbf{W} \right],\nonumber\\
	T_3(g \mid \mathbf{W}) &=\sum_{k=1}^n \Delta_{n,k}  \E \left[ \int_0^1 \Big[ g(S_n + W_{n,k-1} + (W_{n,k} - W_{n,k-1}) u) - g(S_n + W_{n,k}) \Big] \dd u \;\middle|\; \mathbf{W} \right],\nonumber\\
	&= \sum_{k=1}^n \Delta_{n,k}  \E \left[ g(S_n + {W}_{n,k-1} + ({W}_{n,k} - {W}_{n,k-1}) U) - g(S_n + {W}_{n,k})\;\middle|\; \mathbf{W} \right],\nonumber\\
	T_4(g \mid \mathbf{W}) &= \theta \E \left[ \int_{W_{\max}^{(n)}}^1 g(S_n + t) \dd t \;\middle|\; \mathbf{W} \right],
\end{align}
for any real-valued function $g$ for which these functions are well-defined.

\begin{theorem}\label{thm:main_bound}
	Let $S_n = \sum_{k=1}^n W_{n,k} X_k$ be defined as above satisfying Assumptions \ref{a1}--\ref{a2}. Then for any $a, \theta > 0$, letting $f_a$ be as in Lemma \ref{lem:cont}, we have
	$$
	\P(S_n \le a \mid \mathbf{W}) - \P(\mathcal{D}_\theta \le a) =  \frac{1}{\theta} \Big( T_1(f_a' \mid \mathbf{W}) + T_2(f_a' \mid \mathbf{W}) - T_3(f_a' \mid \mathbf{W})  - T_4(f_a' \mid \mathbf{W}) \Big).
	$$
\end{theorem}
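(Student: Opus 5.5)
Evaluate the Stein equation \eqref{eq:stein} at $x=S_n$ and take conditional expectations given $\mathbf{W}$. Since $f_a$ solves \eqref{eq:stein} pointwise (with the convention $f_1'(a)=0$), this gives
$$
\P(S_n\le a\mid\mathbf{W})-\P(D_\theta\le a)=\E\Big[\tfrac{S_n}{\theta}f_a'(S_n)+f_a(S_n)-f_a(S_n+1)\;\Big|\;\mathbf{W}\Big].
$$
The plan is to rewrite each of the two pieces on the right so that the $T_i$ appear. Integrability is not an issue: $f_a$ is bounded with bounded derivative (by Lemma \ref{lem:cont}, $|f_a'|\le \theta/a+\theta^2+\theta/a$ near the relevant range), and $\E[W_{\max}^{(n)}]<\infty$.

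\textbf{First piece (size biasing).} Write $S_n f_a'(S_n)=\sum_k W_{n,k}X_k f_a'(S_n^{(k)}+W_{n,k}X_k)$. By \ref{a1}, conditionally on $\mathbf{W}$, $X_k$ is independent of $S_n^{(k)}$. Applying \eqref{eq:sb} conditionally on $\mathbf{W}$ and $S_n^{(k)}$ gives
$$
\E[S_n f_a'(S_n)\mid\mathbf{W}]=\sum_k\lambda_k(\mathbf{W})W_{n,k}\,\E[f_a'(S_n^{(k)}+W_{n,k}X_k^*)\mid\mathbf{W}].
$$
This requires the coupling $(X_k,X_k^*)$ from \eqref{eq:optcoup} to be built independently of the other $X_j$ given $\mathbf{W}$; I would state this explicitly. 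Adding and subtracting $f_a'(S_n+W_{n,k})$ then yields
$$
\E[S_n f_a'(S_n)\mid\mathbf{W}]=T_1(f_a'\mid\mathbf{W})+\sum_k\lambda_k W_{n,k}\E[f_a'(S_n+W_{n,k})\mid\mathbf{W}].
$$
Adding and subtracting $\Delta_{n,k}$ in the coefficient of the last sum turns it into $T_2(f_a'\mid\mathbf{W})+\sum_k\Delta_{n,k}\E[f_a'(S_n+W_{n,k})\mid\mathbf{W}]$.

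\textbf{Second piece (telescoping).} By the fundamental theorem of calculus, $f_a(S_n)-f_a(S_n+1)=-\int_0^1 f_a'(S_n+t)\,\dd t$. Here I need $f_a$ absolutely continuous; $f_1$ is Lipschitz and $f_2$ is differentiable with bounded derivative, so this holds. Split $[0,1]$ using the ordering \ref{a2} as $\bigcup_k(W_{n,k-1},W_{n,k}]\cup(W_{\max}^{(n)},1]$, with the integral over the last piece read as a signed integral if $W_{\max}^{(n)}>1$. Substituting $t=W_{n,k-1}+(W_{n,k}-W_{n,k-1})u$ gives
$$
\theta\int_{W_{n,k-1}}^{W_{n,k}}f_a'(S_n+t)\,\dd t=\Delta_{n,k}\int_0^1 f_a'\big(S_n+W_{n,k-1}+(W_{n,k}-W_{n,k-1})u\big)\,\dd u.
$$
After multiplying by $\theta$, the second piece equals $-\sum_k\Delta_{n,k}\E[\int_0^1 f_a'(\cdots)\,\dd u\mid\mathbf{W}]-T_4$.

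\textbf{Combining.} Multiply the whole identity by $\theta$ and add the two pieces. The remaining terms are
$$
\sum_k\Delta_{n,k}\,\E\Big[f_a'(S_n+W_{n,k})-\int_0^1 f_a'(\cdots)\,\dd u\;\Big|\;\mathbf{W}\Big]=-T_3(f_a'\mid\mathbf{W}),
$$
and dividing by $\theta$ gives the claimed identity. The argument is bookkeeping; the only delicate step is justifying the conditional size-bias identity together with the coupling, i.e.\ that $X_k^*$ can be realized jointly with $S_n^{(k)}$ preserving conditional independence. I would handle this by constructing each $(X_k,X_k^*)$ pair on an independent product space given $\mathbf{W}$. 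A second, minor point is the case $W_{\max}^{(n)}>1$, which the signed-integral convention in $T_4$ covers.
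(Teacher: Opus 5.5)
Your proposal is correct and follows the paper's proof essentially step for step: evaluate the Stein equation at $S_n$, use the conditional size-bias identity to produce $T_1$ and $T_2$, and split $\theta\int_0^1 f_a'(S_n+t)\,\dd t$ at $W_{\max}^{(n)}$ and along the ordered grid to produce $T_3$ and $T_4$. Your extra justifications (absolute continuity of $f_a$ for the fundamental theorem of calculus, realizing $X_k^*$ conditionally independent of $S_n^{(k)}$, and the signed-integral reading of $T_4$ when $W_{\max}^{(n)}>1$) are points the paper leaves implicit; one small correction is that conditional integrability of $S_n f_a'(S_n)$ follows from $\lambda_k(\mathbf{W})<\infty$ together with $\|f_a'\|_\infty\le \theta/a+\theta^2$, not from $\E[W_{\max}^{(n)}]<\infty$.
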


\begin{proof}
	By \eqref{eq:stein}, we have
	\begin{equation} \label{eq:stein_eval}
		\P(S_n \le a \mid \mathbf{W}) - \P(D_\theta \le a) = \frac{1}{\theta} \left( \E[S_n f_a'(S_n) \mid \mathbf{W}] - \theta \int_0^1 \E[f_a'(S_n+t) \mid \mathbf{W}]\; \dd t \right).
	\end{equation}
	
	Noting the size-bias relation \eqref{eq:sb}, and conditioning on $(S_n^{(k)}, W_{n,k})$ independent of $X_k$, we have
	\begin{align}\label{eq:decomp_step3}
		\E[S_n f_a'(S_n) \mid \mathbf{W}] &=\sum_{k=1}^n \lambda_k(\mathbf{W}) \E \left[ {W}_{n,k} f_a'(S_n^{(k)} + {W}_{n,k} X_k^*) \;\middle|\; \mathbf{W} \right] \nonumber\\
		&= T_1(f_a' \mid \mathbf{W}) + \sum_{k=1}^n \lambda_k(\mathbf{W}) \E \left[ {W}_{n,k} f_a'(S_n + {W}_{n,k}) \;\middle|\; \mathbf{W} \right] \nonumber\\
		&= T_1(f_a' \mid \mathbf{W})  + T_2(f_a' \mid \mathbf{W}) + \sum_{k=1}^n  \Delta_{n,k} \E \left[  f_a'(S_n +{W}_{n,k}) \;\middle|\; \mathbf{W} \right].
	\end{align}
	
	For the integral term in \eqref{eq:stein_eval}, We can decompose it as
	\begin{align}\label{eq:decomp_integral}
		\theta \int_0^1 \E[f_a'(S_n+t) \mid \mathbf{W}]\; \dd t 
		&= T_4(f_a' \mid \mathbf{W}) + \theta \E \left[ \int_0^{W_{\max}^{(n)}} f_a'(S_n + t) \, \dd t \;\middle|\; \mathbf{W} \right].
	\end{align}
	Because the coordinates ${W}_{n,k}$ partition the interval $[0,W_{\max}^{(n)}]$, we can rewrite the second integral using a uniform random variable $U \sim \mathbb{U}[0,1]$ independent of all else as
	\begin{align*}
		\theta \E \left[ \int_0^{W_{\max}^{(n)}} f_a'(S_n + t) \, \dd t \;\middle|\; \mathbf{W} \right] & = \sum_{k=1}^n \Delta_{n,k} \E\left[ f_a'(S_n + {W}_{n,k-1} + ({W}_{n,k} - {W}_{n,k-1}) U) \;\middle|\; \mathbf{W} \right].
	\end{align*}
	Subtracting this from the discrete sum in \eqref{eq:decomp_step3} yields
	\begin{equation}\label{eq:decomp_T3}
		\E[S_n f_a'(S_n) \mid \mathbf{W}] - \theta \E \left[ \int_0^{W_{\max}^{(n)}} f_a'(S_n + t) \, \dd t \;\middle|\; \mathbf{W} \right]= T_1(f_a' \mid \mathbf{W})  + T_2(f_a' \mid \mathbf{W}) -T_3(f_a' \mid \mathbf{W}).
	\end{equation}
	Finally, Substituting \eqref{eq:decomp_integral} and \eqref{eq:decomp_T3} back into \eqref{eq:stein_eval} yields the result.
\end{proof}

\subsubsection{Proof of Proposition \ref{prop:master}}\label{sec:pathwisebd}
Because the derivative of solution to our Stein equation decompose into non-increasing, non-negative functions (Lemma \ref{lem:cont}), we first bound the operators $T_1$ -- $T_3$ directly utilizing pathwise monotonicity. Recall that $X_k$ is a non-negative integer-valued random variable and
$$x_{\min}^{(k)}(\mathbf{W}) := \min \{ x \in \mathbb{N}_0 : \P(X_k = x \mid \mathbf{W}) > 0 \}.$$
By definition, $\P(X_k = x_{\min}^{(k)}(\mathbf{W}) \mid \mathbf{W}) > 0$. For any $g: \R_+ \to \R_+$, we can bound
$$
g(S_n) = g(S_n^{(k)} + W_{n,k} X_k) \ge g(S_n^{(k)} + W_{n,k} x_{\min}^{(k)}(\mathbf{W})) \ind(X_k = x_{\min}^{(k)}(\mathbf{W})).
$$
Thus, taking a conditional expectation using the conditional independence of $S_n^{(k)}$ and $X_k$, we obtain 
\begin{equation}\label{eq:Snk_bound}
	\E[g(S_n) \mid \mathbf{W}] \ge \P(X_k =x_{\min}^{(k)}(\mathbf{W}) \mid \mathbf{W}) \E[g(S_n^{(k)} + W_{n,k} x_{\min}^{(k)}(\mathbf{W})) \mid \mathbf{W}].
\end{equation}
For economy of notation, we denote 
$$
\eps_{n,2}(\mathbf{W}):= \sum_{k=1}^n \lambda_k(\mathbf{W}) W_{n,k} \frac{d_{TV}(X_k^*, X_k + 1 \mid \mathbf{W})}{\P(X_k = x_{\min}^{(k)}(\mathbf{W}) \mid \mathbf{W})}.
$$

\begin{lemma}[Bounds for $T_1$ -- $T_3$]\label{lem:bound_TE}
	Let $g : \R \to \R_+$ be a non-negative, non-increasing function. Then,
	\begin{align*}
		 |T_1(g \mid \mathbf{W})| \le \eps_{n,2}(\mathbf{W}) \E[g(S_n) &\mid \mathbf{W}], \quad \qquad |T_2(g \mid \mathbf{W})| \le D_n^* \E[g(S_n) \mid \mathbf{W}], \\
		\text{and}	\qquad 0 \le T_3(g \mid &\mathbf{W}) \le \theta \delta_n \E[g(S_n) \mid \mathbf{W}].   
	\end{align*}
\end{lemma}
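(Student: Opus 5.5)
All three bounds use only that $g\ge 0$ is non-increasing, the ordering \ref{a2}, and the conditional independence \ref{a1}. We treat $T_3$, then $T_2$, then $T_1$.

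\emph{The term $T_3$.} Since $W_{n,k-1}+(W_{n,k}-W_{n,k-1})U\le W_{n,k}$ and $g$ is non-increasing, each integrand $g(S_n+W_{n,k-1}+(W_{n,k}-W_{n,k-1})U)-g(S_n+W_{n,k})$ is non-negative. This gives $T_3\ge 0$. For the upper bound, each integrand is at most $g(S_n+W_{n,k-1})-g(S_n+W_{n,k})$, and each $\Delta_{n,k}\le\theta\delta_n$. Summing over $k$ telescopes:
$$T_3\le \theta\delta_n\,\E\big[g(S_n)-g(S_n+W_{\max}^{(n)})\mid\mathbf W\big]\le\theta\delta_n\E[g(S_n)\mid\mathbf W],$$
where the last step uses $g\ge0$.

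\emph{The term $T_2$.} Set $a_k=\lambda_kW_{n,k}-\Delta_{n,k}$ and $G_k=\E[g(S_n+W_{n,k})\mid\mathbf W]$. By \ref{a2} and monotonicity of $g$, the sequence $G_k$ is non-negative and non-increasing in $k$, with $G_1\le \E[g(S_n)\mid\mathbf W]$. Abel summation with $D_{n,0}=0$ gives
$$T_2=\sum_{k=1}^n a_kG_k=\sum_{k=1}^{n-1}D_{n,k}(G_k-G_{k+1})+D_{n,n}G_n.$$
Taking absolute values, $|T_2|\le D_n^*\big(\sum_{k=1}^{n-1}(G_k-G_{k+1})+G_n\big)=D_n^*G_1\le D_n^*\E[g(S_n)\mid\mathbf W]$.

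\emph{The term $T_1$.} This is the step that needs care. Use the coupling \eqref{eq:optcoup}, chosen conditionally independent of $S_n^{(k)}$. On the event $\{X_k^*=X_k+1\}$ the two arguments $S_n^{(k)}+W_{n,k}X_k^*$ and $S_n+W_{n,k}$ coincide, so the difference vanishes. On the complementary event, bound $|g(x)-g(y)|\le \max\{g(x),g(y)\}$ using $g\ge 0$. Both arguments are at least $S_n^{(k)}+W_{n,k}x_{\min}^{(k)}$: indeed $X_k^*\ge1$, and if $x^{(k)}_{\min}\ge1$ then the support of $X_k^*$ is contained in that of $X_k$, hence $X_k^*\ge x^{(k)}_{\min}$. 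Since $g$ is non-increasing, $\max\{g(x),g(y)\}\le g(S_n^{(k)}+W_{n,k}x_{\min}^{(k)})$. This upper bound depends only on $S_n^{(k)}$, which is conditionally independent of the pair $(X_k,X_k^*)$. Factoring the conditional expectation therefore gives, for the $k$-th summand,
$$\Big|\E\big[g(S_n^{(k)}+W_{n,k}X_k^*)-g(S_n+W_{n,k})\mid\mathbf W\big]\Big|\le d_{TV}(X_k^*,X_k+1\mid\mathbf W)\,\E\big[g(S_n^{(k)}+W_{n,k}x^{(k)}_{\min})\mid\mathbf W\big].$$
Now apply \eqref{eq:Snk_bound}, which bounds the last expectation by $\E[g(S_n)\mid\mathbf W]/\P(X_k=x^{(k)}_{\min}\mid\mathbf W)$. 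Multiplying by $\lambda_kW_{n,k}$ and summing over $k$ yields $|T_1|\le\eps_{n,2}(\mathbf W)\E[g(S_n)\mid\mathbf W]$.

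The main obstacle is this last step, $T_1$. One must ensure the optimal coupling of $(X_k,X_k^*)$ can be taken jointly with $S_n^{(k)}$ while preserving the conditional independence needed to factor the expectation. One must also verify that $x^{(k)}_{\min}$ lower-bounds both perturbed arguments, so that a single monotone bound controls both terms of the difference.
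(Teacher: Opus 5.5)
Your proposal is correct and follows essentially the same route as the paper: monotone telescoping for $T_3$, summation by parts for $T_2$, and for $T_1$ the maximal coupling \eqref{eq:optcoup} together with the pointwise bound $|g(x)-g(y)|\le g(\min\{x,y\})$ (your $\max\{g(x),g(y)\}$ is the same for non-increasing $g$), conditional independence of $S_n^{(k)}$ from $(X_k,X_k^*)$, and \eqref{eq:Snk_bound}. Your observation that the size-biased law lives on the support of $X_k$ with $0$ removed, so that $X_k^*\ge x_{\min}^{(k)}$, makes explicit a step the paper only asserts.
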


\begin{proof}We proceed by bounding each of the three operators separately.
\medskip

	\noindent \underline{\em Bound for $T_1$:}
	Using the triangle inequality, we can bound
	\begin{align}\label{eq:splitsum}
		|T_1(g \mid \mathbf{W})| & \le \sum_{k=1}^n \lambda_k(\mathbf{W}) {W}_{n,k}\, \E \left[ \left| g(S_n^{(k)} + {W}_{n,k} X_k^*) - g(S_n + {W}_{n,k}) \right| \;\Big|\; \mathbf{W} \right]\nonumber\\
		& = \sum_{k=1}^n \lambda_k(\mathbf{W}) {W}_{n,k} \,\E \left[ \Big| g(S_n^{(k)} + {W}_{n,k} X_k^*) - g(S_n^{(k)} + {W}_{n,k} X_k + {W}_{n,k}) \Big| \; \Big|\; \mathbf{W} \right].
	\end{align}
	Because $g$ is non-negative and non-increasing, for any $x, y \ge 0$, we have
	 \begin{equation}\label{eq:gbd}
	 		 |g(x) - g(y)| \le g(\min(x,y)).
	 \end{equation}
	 Since conditionally $X_k, X_k^* \ge x_{\min}^{(k)}(\mathbf{W})$ almost surely, and $({W}_{n,k})_{k \in [n]}$ are non-negative, every argument of $g$ in \eqref{eq:splitsum} is bounded below by $S_n^{(k)} + W_{n,k} x_{\min}^{(k)}(\mathbf{W})$. Moreover, the difference is non-zero only when $X_k^* \neq X_k + 1$ conditionally given $\mathbf{W}$. By our optimal coupling of $(X_k, X_k^*)$, we have from \eqref{eq:optcoup} that $\P(X_k^* \neq X_k + 1 \mid \mathbf{W}) = d_{TV}(X_k^*, X_k + 1 \mid \mathbf{W})$. Noting that $S_n^{(k)}$ is conditionally independent of $(X_k, X_k^*)$, we have from \eqref{eq:gbd} that
	\begin{multline*}
		\lambda_k(\mathbf{W}) {W}_{n,k}\,\E \left[ \Big| g(S_n^{(k)} + {W}_{n,k} X_k^*) - g(S_n^{(k)} + {W}_{n,k} X_k + {W}_{n,k}) \Big| \;\middle|\; \mathbf{W} \right] \\ \le \lambda_k(\mathbf{W}) {W}_{n,k} \, \E[ g(S_n^{(k)} + W_{n,k} x_{\min}^{(k)}(\mathbf{W})) \mid \mathbf{W}] \; d_{TV}(X_k^*, X_k + 1 \mid \mathbf{W}).
	\end{multline*}
	Substituting this bound into \eqref{eq:splitsum} along with using \eqref{eq:Snk_bound} completes the proof. \medskip
	
	\noindent \underline{\em Bound for $T_2$:}
	By \eqref{eq:t14_rand}, and noting that $\lambda_k(\mathbf{W}) W_{n,k} - \Delta_{n,k} = D_{n,k} - D_{n,k-1}$, with the convention $D_{n,0} = 0$, we apply summation by parts to obtain
	\begin{align*}
		T_2(g \mid \mathbf{W}) &= \sum_{k=1}^n (\lambda_k(\mathbf{W}) W_{n,k} - \Delta_{n,k}) \, \E \left[ g(S_n + {W}_{n,k}) \;\middle|\; \mathbf{W} \right]\\
		&= \sum_{k=1}^n (D_{n,k}-D_{n,k-1}) \, \E \left[ g(S_n + {W}_{n,k}) \;\middle|\; \mathbf{W} \right]\\
		& =  D_{n,n} \, \E\left[ g(S_n + {W}_{n,n}) \;\middle|\; \mathbf{W} \right] +  \sum_{k=1}^{n-1} D_{n,k} \, \E \left[ g(S_n + {W}_{n,k}) - g(S_n + {W}_{n,k+1}) \;\middle|\; \mathbf{W} \right].
	\end{align*}
	
	Because ${W}_{n,k}$ is a monotonically increasing sequence and $g$ is a non-increasing function, we have $g(S_n + {W}_{n,k}) - g(S_n + {W}_{n,k+1}) \ge 0$. Thus, bounding $D_{n,k}$ by its maximum absolute value $D_n^*$, we have
	\begin{align*}
		|T_2(g \mid \mathbf{W})| &\le D_n^* \left( \E\left[g(S_n + {W}_{n,n}) \;\middle|\; \mathbf{W}\right] + \sum_{k=1}^{n-1} \E\left[ \left( g(S_n + {W}_{n,k}) - g(S_n + {W}_{n,k+1}) \right) \;\middle|\; \mathbf{W}\right] \right)\\
		&= D_n^* \, \E\left[ g(S_n + {W}_{n,1}) \;\middle|\; \mathbf{W} \right] \le D_n^* \, \E[g(S_n) \mid \mathbf{W}],
	\end{align*}
	where the telescoping sum collapses to $g(S_n + {W}_{n,1})$, and the final step holds since ${W}_{n,1} \ge 0$ and $g$ is non-increasing. This completes the proof.
	\medskip
	
	\noindent \underline{\em Bound for $T_3$:}
	By \eqref{eq:t14_rand},
	$$
	T_3(g \mid \mathbf{W}) = \sum_{k=1}^n \Delta_{n,k} \, \E \left[ g(S_n + {W}_{n,k-1} + ({W}_{n,k} - {W}_{n,k-1}) U) - g(S_n + {W}_{n,k}) \;\middle|\; \mathbf{W} \right].
	$$
	Since $g$ is non-increasing, we have
	$$
	g(S_n + {W}_{n,k-1}) \ge g(S_n + {W}_{n,k-1} + ({W}_{n,k} - {W}_{n,k-1}) U) \ge g(S_n + {W}_{n,k}).
	$$
	Thus, recalling from \eqref{def:delD} that $\theta \delta_n = \max_{k \in [n]}  \Delta_{n,k}$, we obtain
	\begin{align*}
		0 &\le T_3(g \mid \mathbf{W}) \le \sum_{k=1}^n \Delta_{n,k} \, \E \left[ g(S_n + {W}_{n,k-1}) - g(S_n + {W}_{n,k}) \;\middle|\; \mathbf{W} \right]\\
		& \le \theta \delta_n \sum_{k=1}^n \, \E \left[ g(S_n + {W}_{n,k-1}) - g(S_n + {W}_{n,k}) \;\middle|\; \mathbf{W} \right]\\
		& = \theta \delta_n \, \E\left[ g(S_n) - g(S_n + W_{n,n}) \;\middle|\; \mathbf{W} \right].
	\end{align*}
	Finally, because $g$ is non-negative, dropping the term $-g(S_n + W_{n,n})$ yields the conclusion.
\end{proof}

We are now ready to prove Proposition \ref{prop:master} combining the individual bounds obtained in Lemma \ref{lem:bound_TE}.

\begin{proof}[Proof of Proposition \ref{prop:master}]
	For $a>0$, Lemma \ref{lem:cont} yields the decomposition $f_a' = (g_1 + u_+) - (g_2 + u_-)$. By the linearity of $T_1, \dots, T_4$, we apply the triangle inequality to bound the total error term by term, and apply Lemma \ref{lem:bound_TE} for each of these non-negative and non-increasing functions.  
	
	Recall that $\|u_{\pm}\|_\infty \le \theta^2$ and $g_1 \le g_2$. For $T_1$, we apply the triangle inequality and Lemma \ref{lem:bound_TE} to obtain
	\begin{align}\label{eq:T2_eval_rand}
		|T_1(f_a' \mid \mathbf{W})| &\le |T_1(u_+ \mid \mathbf{W})| + |T_1(g_1 \mid \mathbf{W})| + |T_1(u_- \mid \mathbf{W})| + |T_1(g_2 \mid \mathbf{W})| \nonumber \\
		&\le \eps_{n,2}(\mathbf{W}) \, \E \Big[ u_+(S_n) + g_1(S_n) + u_-(S_n) + g_2(S_n) \;\Big|\; \mathbf{W} \Big] \nonumber\\
		&\le 2 \eps_{n,2}(\mathbf{W}) \, \E \Big[ \theta^2 + g_2(S_n) \;\Big|\; \mathbf{W} \Big].
	\end{align}
	
	Estimating $T_2$ in exactly the same way yields
	\begin{align}\label{eq:T3_eval_rand}
		|T_2(f_a' \mid \mathbf{W})| &\le 2 D_n^* \E \Big[ \theta^2 + g_2(S_n) \;\Big|\; \mathbf{W} \Big].
	\end{align}
	
	For $T_3$, because $T_3 \ge 0$, using Lemma \ref{lem:bound_TE} we can bound
	\begin{align}\label{eq:T1_eval_rand}
		|T_3(f_a' \mid \mathbf{W})| &= |T_3(u_+ + g_1 \mid \mathbf{W}) - T_3(u_- + g_2 \mid \mathbf{W})| \nonumber\\
		&\le \max\{T_3(u_+ + g_1 \mid \mathbf{W}), T_3(u_- + g_2 \mid \mathbf{W})\} \nonumber \\
		&\le \theta \delta_n \, \E \Big[ \max\{u_+(S_n) + g_1(S_n), u_-(S_n) + g_2(S_n)\} \;\Big|\; \mathbf{W} \Big] \nonumber\\
		&\le \theta \delta_n \, \E \Big[ \theta^2 + g_2(S_n) \;\Big|\; \mathbf{W} \Big].
	\end{align}
	
	Finally for $T_4$, using the fact that $|f_a'(x)| \le \max\{g_1+ u_+, g_2+ u_-\} \le \theta^2 + g_2(x)$ and that $g_2$ is non-increasing, we obtain
	\begin{align}\label{eq:E_eval_rand}
		|T_4(f_a' \mid \mathbf{W})| &\le \theta \E \left[ \left| \int_{W_{\max}^{(n)}}^1 f_a'(S_n + t) \dd t \right| \;\middle|\; \mathbf{W} \right] \nonumber\\
		&\le \theta |W_{\max}^{(n)} - 1| \E \Big[ \sup_{t \ge 0} |f_a'(S_n + t)| \;\Big|\; \mathbf{W} \Big] \nonumber\\
		&\le \theta |W_{\max}^{(n)} - 1| \E \Big[ \theta^2 + g_2(S_n) \;\Big|\; \mathbf{W} \Big].
	\end{align}
	
	Combining \eqref{eq:T2_eval_rand}, \eqref{eq:T3_eval_rand}, \eqref{eq:T1_eval_rand} and \eqref{eq:E_eval_rand}  into Theorem \ref{thm:main_bound} yields the result.
\end{proof}

\subsection{Proofs of Theorems \ref{thm:randrates_cond} and \ref{thm:randrates_unordered}}
We use the pointwise bound in Proposition \ref{prop:master} to obtain Theorems \ref{thm:randrates_cond} and \ref{thm:randrates_unordered}. Since the function $g_2$ in Proposition \ref{prop:master} may explode for very small values of $a$, to obtain a bound on the conditional Kolmogorov distance $d_K(S_n, D_\theta \mid \mathbf{W})$, we will utilize Proposition \ref{prop:master} only for $a \ge \tau_n(\mathbf{W})$ using an appropriate $\mathbf{W}$-measurable cut-off $\tau_n(\mathbf{W})>0$, and then apply Lemma \ref{lem:lb} conditionally. A bound on the unconditional $d_K(S_n, D_\theta)$ is then obtained simply by taking an expectation.

\begin{proof}[Proof of Theorem \ref{thm:randrates_cond}]
	Using the bound for $\E[g_2(S_n) \mid \mathbf{W}]$ from \eqref{eq:Eg2bd} applied to the conditional measure, we obtain from Proposition \ref{prop:master} that for any $a>0$,
	\begin{equation}\label{eq:stein_eval_det}
		|\P(S_n \le a \mid \mathbf{W}) - \P(D_\theta \le a)| \le \eps_n(\mathbf{W}) \left(c_1(a) + c_\theta \frac{d_K(S_n, D_\theta \mid \mathbf{W})}{a}\right),
	\end{equation}
	where $c_\theta := \theta(1 + 2\zeta(1+\theta))$ and $c_1(a) := \theta^2 + \theta R_\theta(a)\big(1 + \zeta(1+\theta)\big)$.
	
	For $a \in [\tau_n(\mathbf{W}), \infty)$ with $\tau_n(\mathbf{W}) = 2c_\theta \eps_n(\mathbf{W})$, the coefficient of $d_K(S_n, D_\theta \mid \mathbf{W})$ in \eqref{eq:stein_eval_det} is smaller than or equal to $\frac{c_\theta \eps_n(\mathbf{W})}{2 c_\theta \eps_n(\mathbf{W})} = \frac{1}{2}$. Moreover, note from \eqref{eq:R} that for any $\theta>0$, the function $R_\theta(a)$, and therefore $c_1(a)$ is non-increasing in $a>0$. Thus, from \eqref{eq:stein_eval_det} it follows that
	\begin{align*}
		\sup_{a\geq \tau_n(\mathbf{W})} |\mathbb{P}(S_n\leq a \mid \mathbf{W}) - \mathbb{P}(D_\theta\leq a)| & \leq \eps_n(\mathbf{W}) c_1(\tau_n(\mathbf{W}))+ \frac{1}{2} d_K(S_n,D_\theta \mid \mathbf{W}).
	\end{align*}
	Now applying Lemma \ref{lem:lb} conditionally, we obtain
	\begin{align*}
		d_K(S_n, D_\theta \mid \mathbf{W}) &\le \tau_n(\mathbf{W}) R_\theta(\tau_n(\mathbf{W})) +  \eps_n(\mathbf{W}) c_1(\tau_n(\mathbf{W}))+ \frac{1}{2} d_K(S_n,D_\theta \mid \mathbf{W}) ,
	\end{align*}
	whence
	$$
	d_K(S_n, D_\theta \mid \mathbf{W}) \le 2 \tau_n(\mathbf{W}) R_\theta(\tau_n(\mathbf{W})) + 2\eps_n(\mathbf{W}) c_1(\tau_n(\mathbf{W})).
	$$
	Substituting $\tau_n(\mathbf{W}) = 2c_\theta \eps_n(\mathbf{W})$, we have
	\begin{align*}
		&2 \tau_n(\mathbf{W}) R_\theta(\tau_n(\mathbf{W})) + 2 \eps_n(\mathbf{W}) c_1(\tau_n(\mathbf{W})) \\
		&= 4c_\theta \eps_n(\mathbf{W}) R_\theta(2c_\theta \eps_n(\mathbf{W})) + 2\eps_n(\mathbf{W}) \Big(\theta^2 + \theta R_\theta(2c_\theta \eps_n(\mathbf{W}))(1+\zeta(1+\theta))\Big) \\
		&= 2\theta^2 \eps_n(\mathbf{W}) + 2\eps_n(\mathbf{W}) R_\theta(2c_\theta \eps_n(\mathbf{W})) \Big(2c_\theta + \theta(1+\zeta(1+\theta))\Big).
	\end{align*}
	Since $2c_\theta + \theta(1+\zeta(1+\theta)) = 2\theta(1+2\zeta(1+\theta)) + \theta + \theta\zeta(1+\theta) = \theta(3 + 5\zeta(1+\theta))$, this yields
	$$
	d_K(S_n, D_\theta \mid \mathbf{W}) \le 2\theta^2 \eps_n(\mathbf{W}) + 2\theta (3 + 5\zeta(1+\theta))  \eps_n(\mathbf{W}) R_\theta(2c_\theta \eps_n(\mathbf{W})).
	$$
	For $\theta \ge 1$, substituting $R_\theta(2c_\theta \eps_n(\mathbf{W})) =  \max \left\{e^{-\theta \gamma}/\Gamma(\theta),k_\theta\right\}$, the bound evaluates to exactly $C_1 \eps_n(\mathbf{W})$. For $\theta \in (0,1)$, substituting $R_\theta(2c_\theta \eps_n(\mathbf{W})) = (2c_\theta \eps_n(\mathbf{W}))^{\theta-1}$ yields the bound 
	\begin{multline*}
			2\theta^2 \eps_n(\mathbf{W}) + 2\theta (3 + 5\zeta(1+\theta)) (2c_\theta)^{\theta-1} \eps_n(\mathbf{W})^\theta\\
			= 2\theta^2 \eps_n(\mathbf{W}) + 3(2c_\theta)^\theta \frac{6\theta  + 10\theta \,\zeta(1+\theta)}{6\theta + 12 \theta \,\zeta(1+\theta)} \eps_n(\mathbf{W})^\theta \le 2\theta^2 \eps_n(\mathbf{W}) + C_2 \eps_n(\mathbf{W})^\theta,
	\end{multline*}
	where $C_2 = 3(2c_\theta)^\theta$. Combining the bounds, we obtain
	\begin{equation}\label{eq:condbd}
		d_K(S_n, D_\theta \mid \mathbf{W})  \le \begin{cases} 
			C_1 \eps_n(\mathbf{W}), & \text{if } \theta \ge 1, \\[6pt] 
			2\theta^2 \eps_n(\mathbf{W}) + C_2 \eps_n(\mathbf{W})^\theta, & \text{if } \theta \in (0,1). 
		\end{cases}
	\end{equation}
	
	Finally, applying the triangle inequality, we can bound
		\begin{multline*}\label{eq:dkcond}
		d_K(S_n, D_\theta) = \sup_{x \ge 0} \big| \E \big[ \P(S_n \le x \mid \mathbf{W}) - \P(D_\theta \le x) \big] \big| \\
		\le \E \left[ \sup_{x \ge 0} \big| \P(S_n \le x \mid \mathbf{W}) - \P(D_\theta \le x) \big| \right] = \E \big[ d_K(S_n, D_\theta \mid \mathbf{W}) \big].
	\end{multline*}
	Taking an expectation throughout the bound in \eqref{eq:condbd} thus yields for $\theta \ge 1$ that
	$$
	d_K(S_n, D_\theta) \le C_1\E \left[\eps_n(\mathbf{W}) \right].
	$$
	Similarly, for $\theta \in (0,1)$, taking expectation in \eqref{eq:condbd} and applying Jensen's inequality yields
	$$
	d_K(S_n, D_\theta) \le \E \left[ 2\theta^2 \eps_n(\mathbf{W}) + C_2\eps_n(\mathbf{W})^\theta \right] \le 2\theta^2 \E \left[ \eps_n (\mathbf{W})\right] + C_2 (\E[\eps_n (\mathbf{W})])^\theta.
	$$
	This concludes the proof.
\end{proof}

\begin{proof}[Proof of Theorem \ref{thm:randrates_unordered}]
	Fix $n \in \N$. Conditionally on the weights $\mathbf{W}$, let $\sigma : [n] \to [n]$ be a deterministic permutation of $[n]$ that sorts the weights $W_{n,1}, \hdots, W_{n,n}$ into non-decreasing order, i.e., 
	$$	W_{n,\sigma(1)} \le W_{n,\sigma(2)} \le \dots \le W_{n,\sigma(n)},$$where any ties are broken arbitrarily. We define the correspondingly permuted sequence of random counts as $V_k := X_{\sigma(k)}$ for $k \in [n]$.
	
Because $X_1, \dots, X_n$ are mutually independent conditionally on $\mathbf{W}$ by assumption \ref{a1}, and because $\sigma$ is fully determined by $\mathbf{W}$, the permuted sequence $(V_1, \dots, V_n)$ remains a sequence of mutually independent random variables under the conditional measure $\P(\, \cdot \, \, | \, \mathbf{W})$. Moreover, we have $\E[V_k \mid \mathbf{W}] = \lambda_{\sigma(k)}(\mathbf{W})$. Since the order statistics $W_{n,(k)}$ trivially satisfy assumption \ref{a2}, and conditionally $(V_k)_{k \in [n]}$ remain independent, the relabelled sum
$$
S_n = \sum_{k=1}^n W_{n,k} X_k = \sum_{k=1}^n W_{n,(k)} V_k
$$
satisfies all the conditions of Theorem \ref{thm:randrates_cond}. It only remains to verify that the aggregate error \eqref{eq:eps_n_random}, evaluated for the permuted sequence yields exactly $\eps_n^{\text{os}}(\mathbf{W})$. The terms $\delta_n$ and $D_n^*$ take the form of $\delta_n^{\text{os}}$ and $D_n^{*,\text{os}}$ with the ordered weights. Also, the maximum weight $W_{\max}^{(n)} = W_{n,(n)}$ is unaffected by the ordering. Finally, noting $W_{n,(k)} = W_{n,\sigma(k)}$ and $V_k = X_{\sigma(k)}$, the third summand in \eqref{eq:eps_n_random} simplifies to
$$
\frac{2}{\theta}\sum_{k=1}^n \frac{\lambda_{\sigma(k)}(\mathbf{W}) W_{n,(k)} \, d_{TV}(V_k^*, V_k+1 \mid \mathbf{W})}{\P(V_k = x_{\min}^{(\sigma(k))}(\mathbf{W}) \mid \mathbf{W})} = \frac{2}{\theta} \sum_{k=1}^n \frac{\lambda_k(\mathbf{W}) W_{n,k} \, d_{TV}(X_k^*, X_k+1 \mid \mathbf{W})}{\P(X_k = x_{\min}^{(k)}(\mathbf{W}) \mid \mathbf{W})}.
$$
Thus, applying Theorem \ref{thm:randrates_cond} conditionally with the sorted sequence yields
$$
d_K(S_n, D_\theta \mid \mathbf{W}) \le \begin{cases} 
	C_1 \eps_n^{\text{os}} (\mathbf{W}), & \text{if } \theta \ge 1, \\[6pt] 
	2\theta^2 \eps_n^{\text{os}} (\mathbf{W}) + C_2 \left(\eps_n^{\text{os}} (\mathbf{W})\right)^\theta, & \text{if } \theta \in (0,1). 
\end{cases}
$$
Since $d_K(S_n, D_\theta) \le \E[ d_K(S_n, D_\theta \mid \mathbf{W})]$, taking an expectation and applying Jensen's inequality for $\theta \in (0,1)$ yields the result.
\end{proof}

\subsection{Proof of Theorem \ref{thm:d11_bound}}

For any test function $h \in \mathcal{H}_{1,1}$, let $f_h$ denote the solution to the Dickman Stein equation \eqref{eq:stein_eq_d11} guaranteed by Theorem \ref{thm:d11solbd} with $\|f_h'\|_\infty \le \theta$ and $\|f_h''\|_\infty \le \theta/2$. Evaluating for $x=S_n$ and taking conditional expectations given $\mathbf{W}$, we obtain from \eqref{eq:stein_eq_d11} that
\begin{equation}\label{eq:stein_eval_d11}
	\E[h(S_n) - h(D_\theta) \mid \mathbf{W}] = \E \left[ \frac{S_n}{\theta}f_h'(S_n) - \int_0^1 f_h'(S_n+t)dt \;\middle|\; \mathbf{W} \right].
\end{equation}
For the term $\E[S_n f_h'(S_n) \mid \mathbf{W}]$, we deploy the same size-bias coupling argument used in the proof of Theorem \ref{thm:main_bound}. Specifically, following the derivation in \eqref{eq:decomp_step3}, we can write
\begin{align*}
	\frac{1}{\theta}\E[S_n f_h'(S_n) \mid \mathbf{W}] &= \frac{1}{\theta}\sum_{k=1}^n \lambda_k(\mathbf{W}) W_{n,k} \E[f_h'(S_n^{(k)} + W_{n,k}X_k^*) \mid \mathbf{W}]\\
	&= I_1 + I_2 + I_3 + I_4 +  \E \left[ \int_0^{t_n}f_h'(S_n + t) \, \dd t\mid \mathbf{W} \right],
\end{align*}
where analogously to \eqref{eq:t14_rand}, we denote 
\begin{align*}
	I_1 &= \frac{1}{\theta}\sum_{k=1}^n \lambda_k(\mathbf{W}) W_{n,k} \E \big[ f_h'(S_n^{(k)} + W_{n,k}X_k^*) - f_h'(S_n + W_{n,k}) \mid \mathbf{W} \big], \\
	I_2 &= \frac{1}{\theta}\sum_{k=1}^n \lambda_k(\mathbf{W}) W_{n,k} \E \big[ f_h'(S_n + W_{n,k}) - f_h'(S_n + t_k) \mid \mathbf{W} \big], \\
	I_3 &= \sum_{k=1}^n \left( \frac{\lambda_k(\mathbf{W}) W_{n,k}}{\theta} - \Delta t_k \right) \E \big[ f_h'(S_n + t_k) \mid \mathbf{W} \big], \\
	I_4 &= \sum_{k=1}^n \E \left[ \int_{t_{k-1}}^{t_k} \big( f_h'(S_n + t_k) - f_h'(S_n + t) \big) dt \;\middle|\; \mathbf{W} \right].
\end{align*}
Since $\|f_h'\|_ \infty \le \theta$, this yields via \eqref{eq:stein_eval_d11} that 
\begin{equation}\label{eq:I14bd}
	\E[h(S_n) - h(D_\theta) \mid \mathbf{W}] - (I_1 + I_2 + I_3 + I_4) \le \theta |t_n -1|.
\end{equation}
Using $\|f_h''\|_\infty \le \theta/2$, noting that $S_n = S_n^{(k)} + W_{n,k}X_k$, and taking an optimal coupling of $(X_k^*, X_k)$ given $\mathbf{W}$ so that $\E\big[|X_k^* - (X_k+1)| \big| \mathbf{W} \big] = d_1(X_k^* ,X_k+1 \mid \mathbf{W})$, we obtain applying the Mean Value Theorem that
\begin{align*}
	|I_1| &\le \frac{1}{\theta} \sum_{k=1}^n \lambda_k(\mathbf{W}) W_{n,k} \E \left[ (\theta/2) \big| S_n^{(k)} + W_{n,k}X_k^* - (S_n^{(k)} + W_{n,k}X_k + W_{n,k}) \big| \,\Big|\, \mathbf{W} \right] \\
	&= \frac{1}{2} \sum_{k=1}^n \lambda_k(\mathbf{W}) W_{n,k}^2 \E\left[|X_k^* - (X_k+1)| \, \Big| \, \mathbf{W}\right] = \frac{1}{2} \sum_{k=1}^n \lambda_k(\mathbf{W}) W_{n,k}^2 \, d_1(X_k^* ,X_k+1 \mid \mathbf{W}).
\end{align*}
Similarly, for $I_2$ we can bound
$$
|I_2| \le \frac{1}{2} \sum_{k=1}^n \lambda_k(\mathbf{W}) W_{n,k} |W_{n,k} - t_k|.
$$
For $I_3$, since $D_{n,k}^{(t)} = \sum_{j=1}^k (\lambda_j(\mathbf{W}) W_{n,j} - \theta \Delta t_j)$ for $k \in [n]$, with the convention $D_{n,0}^{(t)} = 0$, we can rewrite $I_3$ employing summation by parts as
\begin{align*}
	I_3 &= \frac{1}{\theta} \sum_{k=1}^n \big( D_{n,k}^{(t)} - D_{n,k-1}^{(t)} \big) \E \big[ f_h'(S_n + t_k) \mid \mathbf{W} \big] \\
	&= \frac{1}{\theta} D_{n,n}^{(t)} \E \big[ f_h'(S_n + t_n) \mid \mathbf{W} \big] - \frac{1}{\theta} \sum_{k=1}^{n-1} D_{n,k}^{(t)} \E \big[ f_h'(S_n + t_{k+1}) - f_h'(S_n + t_k) \mid \mathbf{W} \big].
\end{align*}
Applying the uniform bounds $\|f_h'\|_\infty \le \theta$ and $\|f_h''\|_\infty \le \theta/2$ to the respective terms, we obtain
$$
|I_3| \le |D_{n,n}^{(t)}| + \frac{1}{2} \sum_{k=1}^{n-1} |D_{n,k}^{(t)}| \Delta t_{k+1} \le D_n^{(t)*} + \frac{1}{2} D_n^{(t)*} \sum_{k=1}^{n-1} \Delta t_{k+1} \le \left(1 + \frac{t_n}{2}\right) D_n^{(t)*}.
$$
Finally, for $I_4$, applying the Lipschitz property of $f_h'$ to the integral over each sub-interval $[t_{k-1}, t_k], k \in [n]$ yields
$$
|I_4| \le \frac{\theta}{2}\sum_{k=1}^n \int_{t_{k-1}}^{t_k}  (t_k - t) dt = \frac{\theta}{4} \sum_{k=1}^n (\Delta t_k)^2 \le \frac{\theta}{4} \delta_n^{(t)} \sum_{k=1}^n \Delta t_k = \frac{\theta t_n}{4} \delta_n^{(t)}.
$$
Summing the bounds for $|I_1|$ through $|I_4|$ and combining with \eqref{eq:I14bd}, we obtain
$$
d_{1,1}(S_n, D_\theta \mid \mathbf{W}) = \sup_{h \in \mathcal{H}_{1,1}}|\E[h(S_n) - h(D_\theta) \mid \mathbf{W}]| \le \eps_n^{1,1}(\mathbf{W}),
$$
with the aggregate error $\eps_n^{1,1}(\mathbf{W})$ defined in \eqref{eq:eps_n_d11}. Taking the unconditional expectation and passing the supremum over $h \in \mathcal{H}_{1,1}$ yields
$$
d_{1,1}(S_n, D_\theta) = \sup_{h \in \mathcal{H}_{1,1}}|\E[h(S_n) - h(D_\theta)| \le 
\E \left[ d_{1,1}(S_n, D_\theta \mid \mathbf{W}) \right] \le \E [\eps_n^{1,1}(\mathbf{W})],
$$
concluding the proof.

\section{Proofs of results in Section \ref{sec:app}}\label{sec:appproofs}

\subsection{Proof of Theorem \ref{thm:harmonic}}\label{sec:proofharmonic}
It is a classical result that under the true harmonic measure (i.e., $\theta=1$), the primes exponents for a random number sampled according to the measures $\mathbb{P}_n$ and $\mathbb{P}_n^{\text{sq}}$ are independent and distributed as Geometric (see e.g.\ \cite{Pinsky16a}) and Bernoulli (see e.g.\ \cite{CS}), respectively. The following lemma establishes the exact distribution of the prime exponents $(X_k)_{k \in [n]}$ under general values of $\theta>0$.

Recall that a random variable $X$ following a negative binomial distribution with parameters $\theta>0$ and $q \in [0,1]$, written as $X \sim \operatorname{NB}(\theta,q)$, has the probability mass function
$$
\mathbb{P}(X = j) = \binom{j+\theta-1}{j} (1-q)^j q^\theta = \frac{\theta(\theta+1)\cdots(\theta+j-1)}{j!}(1-q)^j q^\theta, \quad j \in \N_0.
$$

\begin{lemma}\label{lem:prime_exponents} Let $M_n=\prod_{k=1}^n p_k^{X_k}$ and $M_n^{\text{sq}}=\prod_{k=1}^n p_k^{X_k^{\text{sq}}}$ be sampled according to the measures $\mathbb{P}_n$ and $\mathbb{P}_n^{\text{sq}}$ in \eqref{eq:harmonic_measure} and \eqref{eq:rest.harmonic_measure}, respectively. Then the following holds.
	\begin{enumerate}
		\item 	Under $\mathbb{P}_n$, the exponents $X_1, \dots, X_n$ are independent with $X_k \sim \operatorname{NB}(\theta,1-1/p_k), k \in [n]$.
		\item Under $\mathbb{P}_n^{\text{sq}}$, the exponents $X_1^{\text{sq}}, \dots, X_n^{\text{sq}}$ are independent with $X_k^{\text{sq}} \sim \operatorname{Ber}\big(\theta/(p_k + \theta)\big)$, $k \in [n]$.
	\end{enumerate}

\end{lemma}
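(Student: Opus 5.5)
The plan is to compute the joint probability mass function of the exponent vector directly and observe that it factorizes.

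For part 1, fix $(x_1,\dots,x_n)\in\N_0^n$ and let $N=\prod_{k=1}^n p_k^{x_k}$. The map $N\leftrightarrow(x_1,\dots,x_n)$ is a bijection between $\Omega_n$ and $\N_0^n$ by unique factorization. Since $\tau_\theta$ is multiplicative and the prime powers $p_k^{x_k}$ are pairwise coprime, we get
$$
\frac{\tau_\theta(N)}{N}=\prod_{k=1}^n \binom{x_k+\theta-1}{x_k}p_k^{-x_k}.
$$
Hence $\mathbb{P}_n(X_1=x_1,\dots,X_n=x_n)=Z_n^{-1}\prod_{k}\binom{x_k+\theta-1}{x_k}(1/p_k)^{x_k}$. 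Next we evaluate $Z_n$ as a product of one-dimensional sums, using the generalized binomial series
$$
\sum_{j\ge0}\binom{j+\theta-1}{j}z^j=(1-z)^{-\theta},\qquad |z|<1,
$$
at $z=1/p_k<1$. This gives $Z_n=\prod_{k=1}^n(1-1/p_k)^{-\theta}$, which is finite, so the measure is well defined. All terms are nonnegative, so Tonelli justifies exchanging the sum over $\Omega_n$ with the product. Dividing, the joint mass function equals $\prod_k \binom{x_k+\theta-1}{x_k}(1/p_k)^{x_k}(1-1/p_k)^\theta$. This is exactly a product of $\operatorname{NB}(\theta,1-1/p_k)$ masses with $q=1-1/p_k$, so $1-q=1/p_k$. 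The product form yields both independence and the stated marginals.

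For part 2 we argue identically on $\Omega_n^{\text{sq}}\leftrightarrow\{0,1\}^n$. Here $\tau_\theta(N)/N=\prod_k(\theta/p_k)^{x_k}$, so $Z_n^{\text{sq}}=\prod_k(1+\theta/p_k)$. Then
$$
\mathbb{P}_n^{\text{sq}}(X^{\text{sq}}=x)=\prod_k\frac{(\theta/p_k)^{x_k}}{1+\theta/p_k}=\prod_k\Big(\frac{\theta}{p_k+\theta}\Big)^{x_k}\Big(\frac{p_k}{p_k+\theta}\Big)^{1-x_k},
$$
which is a product of $\operatorname{Ber}(\theta/(p_k+\theta))$ masses.

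There is no real obstacle. The only points requiring care are the convergence of the negative binomial series, which holds since $1/p_k\le 1/2$, and the bijection given by unique factorization.
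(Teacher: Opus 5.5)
Your proposal is correct and follows the paper's proof essentially step for step. Both use unique factorization and multiplicativity of $\tau_\theta$ to factor $\tau_\theta(N)/N$, and both use the generalized binomial series to obtain $Z_n=\prod_k(1-1/p_k)^{-\theta}$ and $Z_n^{\text{sq}}=\prod_k(1+\theta/p_k)$, from which the product of negative binomial (respectively Bernoulli) masses is read off. Your explicit remarks on Tonelli and on convergence via $1/p_k\le 1/2$ are sound small additions.
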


\begin{proof}
	For the first assertion, we start by computing the partition function $Z_n = \sum_{N \in \Omega_n} \frac{\tau_\theta(N)}{N}$ explicitly. Because every $N \in \Omega_n$ can be uniquely decomposed into product of powers of the first $n$ primes, and $\tau_\theta$ is multiplicative, we can write
	\begin{equation}\label{eq:Zn}
		Z_n = \sum_{j_1, \hdots, j_n=0}^\infty \prod_{k=1}^n \frac{\tau_\theta(p_k^{j_k})}{p_k^{j_k}} = \prod_{k=1}^n \left( \sum_{j=0}^\infty \binom{j+\theta-1}{j} \left(\frac{1}{p_k}\right)^j \right) =  \prod_{k=1}^n (1 - 1/p_k)^{-\theta},
	\end{equation}
	where the final step is by the generalized binomial theorem noting $p_k > 1$ for $k \in \N$. Thus, by \eqref{eq:harmonic_measure}, we obtain
	\begin{align*}
		\mathbb{P}(X_1 = j_1, \dots, X_n = j_n) = \mathbb{P}_n\left(\big\{\prod_{k=1}^n p_k^{j_k}\big \}\right) &= \left( \prod_{k=1}^n \left(1 - \frac{1}{p_k}\right)^\theta \right) \frac{\prod_{k=1}^n \tau_\theta(p_k^{j_k})}{\prod_{k=1}^n p_k^{j_k}} \\
		&= \prod_{k=1}^n \left[ \binom{j_k+\theta-1}{j_k} \left(\frac{1}{p_k}\right)^{j_k} \left(1 - \frac{1}{p_k}\right)^\theta \right],
	\end{align*}
	which is the product of the probability mass functions of $\operatorname{NB}(\theta, 1-1/p_k), k \in [n]$ distributions. The conclusion follows.
	\medskip
	
	Next we prove the second assertion. Note that $\tau_\theta(p^j) = \theta^j$ for $j \in \{0,1\}$ and a prime $p$. Therefore, arguing similarly as above, we can write
	\begin{equation}\label{eq:ZnSq}
		Z_n^{\text{sq}} = \sum_{j_1, \hdots, j_n=0}^1 \prod_{k=1}^n \frac{\tau_\theta(p_k^{j_k})}{p_k^{j_k}} = \prod_{k=1}^n \left( \frac{\tau_\theta(p_k^0)}{p_k^0} + \frac{\tau_\theta(p_k^1)}{p_k^1} \right) = \prod_{k=1}^n \left( 1 + \frac{\theta}{p_k} \right).
	\end{equation}
	Hence by \eqref{eq:rest.harmonic_measure} and substituting $Z_n^{\text{sq}}$, we obtain
	\begin{multline*}
		\mathbb{P}(X_1^{\text{sq}} = j_1, \dots, X_n^{\text{sq}} = j_n) = \mathbb{P}_n^{\text{sq}}\left(\big\{\prod_{k=1}^n p_k^{j_k}\big \}\right) = \left( \prod_{k=1}^n \left(1 + \frac{\theta}{p_k}\right)^{-1} \right) \prod_{k=1}^n \left( \frac{\theta}{p_k} \right)^{j_k} \\
		= \prod_{k=1}^n \left[ \left(\frac{\theta}{p_k}\right)^{j_k} \left( \frac{p_k}{p_k + \theta} \right) \right] = \prod_{k=1}^n \left[ \left(\frac{\theta}{p_k+ \theta}\right)^{j_k} \left( \frac{p_k}{p_k + \theta} \right)^{1-j_k} \right],
	\end{multline*}
	yielding the result.
\end{proof}

\begin{rem}
	Note that when $\theta=1$, then $\tau_1 \equiv 1$. In this case, the measures $\mathbb{P}_n$ and $\mathbb{P}_n^{\text{sq}}$ become truly harmonic, and the exponents $(X_k)_{k \in [n]}$ remain independent and are distributed as $\operatorname{Geom}(1-1/p_k)$ and $\operatorname{Ber}(1/(1+p_k))$, respectively.
\end{rem}

For $M_n = \prod_{k=1}^n p_k^{X_k}$ and $M_n^{\text{sq}} = \prod_{k=1}^n p_k^{X_k^{\text{sq}}}$, $n \in \N$ as in Lemma \ref{lem:prime_exponents}, consider the sums
$$
S_n := \frac{\log M_n}{\log p_n}= \frac{1}{\log p_n} \sum_{k=1}^n X_k \log p_k, \quad \text{and} \quad S_n^{\text{sq}} : = \frac{\log M_n^{\text{sq}}}{\log p_n}= \frac{1}{\log p_n} \sum_{k=1}^n X_k^{\text{sq}} \log p_k,
$$
where $X_k \sim \text{NB}(\theta, 1 - 1/p_k)$ and $X_k^{\text{sq}} \sim \text{Ber}\big(\theta/(p_k + \theta)\big)$ are independent for $k \in [n]$. This fits perfectly into our framework of Corollary \ref{cor:indep_weights} by defining the deterministic weights $W_{n,k} = \frac{\log p_k}{\log p_n}$, which trivially satisfy the monotonicity assumption \ref{a2} with $W_{\max}^{(n)}=1$. In order to apply Corollary \ref{cor:indep_weights} to prove the upper bound in Theorem \ref{thm:harmonic}, we first make the following observation on the size-biased distribution of $NB(\theta,1-1/p_k)$.
\begin{lemma}\label{lem:nb_size_bias}
	Let $X_k \sim {\rm NB}(\theta, 1 - 1/p_k), k \in \N$. Then its size-biased distribution $X_k^*$ satisfies
	$$
	X_k^* \stackrel{d}{=} X_k + Y_k,
	$$
	where $Y_k$ is an independent random variable supported on $\N$ satisfying $\P(Y_k \neq 1) = 1/p_k$ and $\E Y_k = (1-1/p_k)^{-1}$.
\end{lemma}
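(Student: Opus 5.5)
The plan is to work with probability generating functions and identify $Y_k$ explicitly. Write $q = 1-1/p_k$, so $X_k \sim \operatorname{NB}(\theta,q)$ has generating function
$$
G(s) = \E[s^{X_k}] = q^\theta (1-(1-q)s)^{-\theta}, \qquad |s|<1/(1-q),
$$
and mean $\lambda_k = \theta(1-q)/q$. By the size-bias relation \eqref{eq:sb}, $X_k^*$ has generating function
$$
G^*(s) = \E[X_k s^{X_k-1}]\, s/\lambda_k = \frac{s\,G'(s)}{G'(1)}.
$$
Differentiating gives $G'(s) = \theta(1-q)q^\theta(1-(1-q)s)^{-\theta-1}$. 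Hence
$$
G^*(s) = G(s)\cdot \frac{q\,s}{1-(1-q)s}.
$$

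The second factor is the generating function of a random variable $Y_k$ on $\N$ with $\P(Y_k=m) = q(1-q)^{m-1}$ for $m\ge1$. This is a geometric law on $\{1,2,\dots\}$ with success probability $q$. Because the generating function of $X_k^*$ factors as the product of that of $X_k$ and that of $Y_k$, uniqueness of generating functions on a neighbourhood of $0$ gives $X_k^* \stackrel{d}{=} X_k + Y_k$ with $Y_k$ independent of $X_k$.

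It remains to read off the two stated properties. First, $\P(Y_k\neq 1) = 1-q = 1/p_k$. Second, $\E Y_k = 1/q = (1-1/p_k)^{-1}$. The latter can also be checked via $\E X_k^* = \E[X_k^2]/\E X_k$ as a sanity check.

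There is no real obstacle here. The only care needed is in the algebraic simplification of $sG'(s)/G'(1)$, where the factors $q^{\theta}$ and $q^{\theta+1}$ must cancel correctly. One should also note that the generating functions converge on a neighbourhood of $0$ (indeed for $|s|<p_k$), which justifies the identification of distributions. For the later application, this representation gives the coupling $X_k^* = X_k + Y_k$, so $d_{TV}(X_k^*,X_k+1)\le \P(Y_k\ne1)=1/p_k$.
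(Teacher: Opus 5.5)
Your proposal is correct and follows essentially the same route as the paper. You compute the size-biased generating function $sG'(s)/G'(1)$, factor it as $G(s)\cdot qs/(1-(1-q)s)$, identify the second factor as the generating function of a geometric variable on $\N$ with success probability $1-1/p_k$, and read off $\P(Y_k\neq 1)=1/p_k$ and $\E Y_k=(1-1/p_k)^{-1}$; the only difference is notational, since your $q$ is the paper's $p$.
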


\begin{proof}
	Denoting $p = 1 - 1/p_k$ and $q = 1/p_k$ for simplicity, note that $\E[X_k] = \frac{\theta q}{p}$ and the probability generating function (PGF) of $X_k$ is given by
	$$
	G_{X_k}(s) = \E[s^{X_k}] = \left( \frac{p}{1 - qs} \right)^\theta, \quad |s| <1/q.
	$$
	By the definition of the size-biased distribution, for $|s|<1/q$, we have
	\begin{multline*}
		G_{X_k^*}(s) = \E[s^{X_k^*}] = \sum_{j=0}^\infty s^j \, \frac{j \, \P(X_k = j)}{\E[X_k]} = \frac{s}{\E[X_k]} G_{X_k}'(s) \\
		= \frac{s}{\theta q/p} \left[ \theta \left( \frac{p}{1 - qs} \right)^{\theta - 1} \frac{pq}{(1 - qs)^2} \right] = \left( \frac{ps}{1 - qs} \right) \left( \frac{p}{1 - qs} \right)^\theta = \left( \frac{ps}{1 - qs} \right) 	G_X(s).
	\end{multline*}
	Thus, we have $X_k^* \stackrel{d}{=} X_k + Y_k$, where $Y_k$ is independent, with its PGF given by
	$$
	G_{Y_k}(s) =\left( \frac{ps}{1 - qs} \right)  = ps \sum_{i=0}^\infty (qs)^i = \sum_{y=1}^\infty p q^{y-1} s^y,  \quad |s| <1/q,
	$$
	whence it has the probability mass function $\P(Y_k = y) = p q^{y-1}$ for $y \in \N$. Consequently, $Y_k \ge 1$ a.s., and $\P(Y_k = 1) = p = 1 - 1/p_k$. Moreover, for $k \in \N$, we have
	$$
	\E[Y_k] = G_{Y_k}'(1) = \frac{1}{1-1/p_k},
	$$
	yielding the result.
\end{proof}

We are now ready to prove Theorem \ref{thm:harmonic}. We prove the upper and lower bounds separately.

\begin{proof}[Proof of upper bounds in Theorem \ref{thm:harmonic}]
	We first prove the upper bound on the Kolmogorov distance. Because the weights $W_{n,k} = \log p_k/ \log p_n, k \in [n]$ are ordered, deterministic and $W_{\max}^{(n)} = 1$, by Corollary \ref{cor:indep_weights}, the bound to the Kolmogorov distances are determined by the aggregate errors
	\begin{equation}\label{eq:er1}
		\eps_n = \delta_n + (2/\theta)D_n^* + (2/\theta)\sum_{k=1}^n \lambda_k W_{n,k} \, \frac{d_{TV}(X_k^*, X_k+1)}{\P(X_k = 0)},
	\end{equation}
	and
	\begin{equation}\label{eq:er2}
		\eps_n^{\text{sq}} = \delta_n + (2/\theta) D_n^* + (2/\theta) \sum_{k=1}^n \lambda_k W_{n,k} \, \frac{d_{TV}((X_k^{\text{sq}})^*, X_k^{\text{sq}}+1)}{\P(X_k^{\text{sq}} = 0)},
	\end{equation}
	respectively. Note here that $\delta_n$ and $D_n^*$ do not depend on the prime powers $(X_k)_{k \in [n]}$ and $(X_k^{\text{sq}})_{k \in [n]}$ and therefore are the same in both cases. 
	
	We first bound $\delta_n$. We have $\Delta_{n,k} = \theta(W_{n,k} - W_{n,k-1})= \frac{\theta(\log p_k - \log p_{k-1})}{\log p_n}$, $k \in [n]$, with the convention $p_0=1$. Hence by Bertrand's postulate which states that $p_k/p_{k-1} \le 2$ for $k \ge 1$, we obtain that
	\begin{equation}\label{eq:delnrec}
			\delta_n  = \frac{1}{\theta}\max_{k \in [n]} \Delta_{n,k} \le \frac{\log 2}{\log p_n}.
	\end{equation}
	
	To bound $D_n^*$, note that we can write the expected value for the exponents (i.e., $\E X_k$ or $\E X_k^{\text{sq}}$, $k \in [n]$) as $\lambda_k = \frac{\theta}{p_k} + a_k$, where for $\mathbb{P}_n$, we have
	\begin{equation*}
		a_k = \E[X_k] - \frac{\theta}{p_k}= \frac{\theta}{p_k - 1} - \frac{\theta}{p_k} = \frac{\theta}{p_k(p_k-1)}, \quad k \in [n],
	\end{equation*}
	while for the square-free measure $\mathbb{P}_n^{\text{sq}}$,
	\begin{equation*}
			a_k = \E[X_k^{\text{sq}}] -  \frac{\theta}{p_k}= \frac{\theta}{p_k+\theta} - \frac{\theta}{p_k} = - \frac{\theta^2}{p_k(p_k+\theta)}, \quad k \in [n].
	\end{equation*}
	In both models, this residual satisfies $|a_k| \le \mathcal{O}(1/p_k^2)$. Recall that $D_n^*= \max_{m \in [n]} |D_{n,m}|$, where
	$$
	D_{n,m} = \sum_{k=1}^m (\lambda_k W_{n,k} - \Delta_{n,k}) = \frac{\theta}{\log p_n} \Bigg( \sum_{k=1}^m \frac{\log p_k}{p_k} - \log p_m \Bigg) + \frac{1}{\log p_n} \sum_{k=1}^m a_k \log p_k.
	$$
	By Mertens' First Theorem, 
	$$
	\Big|\sum_{k=1}^m \frac{\log p_k}{p_k} - \log p_m\Big| \le 2.
	$$
	Also,
	\begin{equation}\label{eq:intub}
		0 \le \sum_{k=1}^\infty \frac{1}{(p_k-1)^2} \log p_k \le 2\sum_{m \ge 2} \frac{\log m}{m^2} = -2\zeta'(2)\approx 1.87<\infty.
	\end{equation}
	Because $|a_k| \le \mathcal{O}(1/p_k^2)$, we thus have that $\sum_{k=1}^\infty |a_k|  \log p_k <\infty$. This yields
	\begin{equation}\label{eq:Dnrec}
			D_n^* \le \frac{\theta}{\log p_n} \, \max_{m \in [n]} \Big| \sum_{k=1}^m \frac{\log p_k}{p_k} - \log p_m \Big| + \frac{1}{\log p_n} \sum_{k=1}^n |a_k| \log p_k = \mathcal{O}\left(\frac{1}{\log p_n}\right).
	\end{equation}
	
	Finally, we estimate the third components in the aggregate error terms in \eqref{eq:er1} and \eqref{eq:er2}. By Lemma \ref{lem:nb_size_bias}, we have $X_k^* \stackrel{d}{=} X_k + Y_k$, where $Y_k \in \N$, and
	$$
	d_{TV}(X_k^*, X_k+1) \le \P(X_k+Y_k \neq X_k+1) = \P(Y_k \neq 1) = \frac{1}{p_k}.
	$$
	For $X_k^{\text{sq}}$, we have $(X_k^{\text{sq}})^* \equiv 1$, so that 
	$$
	d_{TV}((X_k^{\text{sq}})^*, X_k^{\text{sq}} + 1)= \P(1 \neq X_k^{\text{sq}} + 1) = \P(X_k^{\text{sq}} = 1) = \frac{\theta}{p_k+\theta} \le \frac{\theta}{p_k}.
	$$
	Because $\P(X_k = 0) =  \left( 1 - 1/p_k \right)^\theta \ge 2^{-\theta}$ and $\P(X_k^{\text{sq}}=0) = p_k/(p_k+\theta) \ge 2/(2+\theta)$ for $k \in [n]$, for $x_{\min}^{(k)}$ defined in \eqref{eq:xmin}, we have $x_{\min}^{(k)} = 0$ in both cases. Thus, using \eqref{eq:intub} we can bound
	$$
	\sum_{k=1}^n \lambda_k W_{n,k} \, \frac{d_{TV}(X_k^*, X_k+1)}{\P(X_k = 0)} \le \frac{\theta 2^\theta}{\log p_n} \sum_{k=1}^n \frac{\log p_k}{p_k (p_k-1)} = \mathcal{O}\left(\frac{1}{\log p_n}\right),
	$$
	and
	$$
	\sum_{k=1}^n \lambda_k W_{n,k} \, \frac{d_{TV}((X_k^{\text{sq}})^*, X_k^{\text{sq}} + 1)}{\P(X_k^{\text{sq}} = 0)} \le \frac{\theta^2 (2+\theta)}{2\log p_n} \sum_{k=1}^n \frac{\log p_k}{p_k(p_k + \theta)} = \mathcal{O}\left(\frac{1}{\log p_n}\right).
	$$
	Combining these bounds with those obtained for $\delta_n, D_n^*$ in \eqref{eq:delnrec} and \eqref{eq:Dnrec} above, we obtain $\eps_n, \eps_n^{\text{sq}}  = \mathcal{O}\left(\frac{1}{\log p_n}\right)$. The asserted Kolmogorov upper bound now follows from Corollary \ref{cor:indep_weights} upon noting that $\log p_n \ge \log n$.
	
	We conclude by establishing the corresponding upper bounds for the $d_{1,1}$ distance via Theorem \ref{thm:d11_bound}. As highlighted in Remark \ref{rem:metric_comparison}, since the deterministic weights are naturally ordered, we can make the natural grid choice $t_k = W_{n,k}$ for $k \in [n]$, with $t_0 = 0$.

		Under this choice, the components of the aggregate error $\eps_n^{1,1}$ in \eqref{eq:eps_n_d11} perfectly collapse into the existing error terms above. Specifically, the grid gap and the grid discrepancies are given by $\delta_n^{(t)} = \max_{k \in [n]} (W_{n,k} - W_{n,k-1}) = \delta_n$ and
		$
		D_{n,m}^{(t)} = \sum_{j=1}^m \big(\lambda_j W_{n,j} - \theta(W_{n,j} - W_{n,j-1})\big) = D_{n,m},
		$
		for $m \in [n]$ respectively. Furthermore, since $t_n = W_{n,n} = 1$, we have $\theta|t_n-1|=0$ as well as $|W_{n,k} - t_k| = 0, k \in [n]$. Thus, the aggregate error in \eqref{eq:eps_n_d11} simplifies to
		$$
		\eps_n^{1,1} = \frac{\theta}{4} \delta_n + \frac{3}{2} D_n^* + \frac{1}{2} \sum_{k=1}^n \lambda_k W_{n,k}^2 \, d_1(X_k^*, X_k+1).
		$$
		We have already established in \eqref{eq:delnrec} and \eqref{eq:Dnrec} above that $\delta_n$ and $D_n^*$ are of order $\mathcal{O}(1/\log p_n)$. It remains only to bound the size-bias coupling term. For the square-free case $\mathbb{P}_n^{\text{sq}}$, we have $(X_k^{\text{sq}})^* \equiv 1$, so that $d_1((X_k^{\text{sq}})^*, X_k^{\text{sq}} + 1) = \E[X_k^{\text{sq}}] \le \theta/p_k$. For the measure $\mathbb{P}_n$, we have from Lemma \ref{lem:nb_size_bias} that $X_k \sim \text{NB}(\theta, 1-1/p_k), k \in [n]$ and $X_k^* \stackrel{d}{=} X_k + Y_k$ where $Y_k \in \N$ almost surely. Thus, Lemma \ref{lem:nb_size_bias} yields 
		$$
		d_1(X_k^*, X_k+1) \le \E|Y_k - 1| = \E[Y_k] - 1 = \frac{1}{p_k-1} = \mathcal{O}(1/p_k).
		$$
		In both models, since $\lambda_k \le \mathcal{O}(1/p_k)$ by Lemma \ref{lem:prime_exponents}, and $W_{n,k} = \frac{\log p_k}{\log p_n}$ for $k \in [n]$, by Theorem \ref{thm:d11_bound} and \eqref{eq:intub}, we obtain
		$$
		d_{1,1}(S_n, D_\theta) \le \eps_n^{1,1} = \mathcal{O}\left(\frac{1}{\log p_n}\right) + \mathcal{O}\left(\frac{1}{\log^2 p_n}\right) \sum_{k=1}^n \frac{\log^2 p_k}{p_k^2} = \mathcal{O}\left(\frac{1}{\log p_n}\right),
		$$
		yielding the desired upper bound.
\end{proof}

Next, we prove the matching lower bounds in Theorem \ref{thm:harmonic}. To obtain the lower bound on the Kolmogorov distance, we compare the probability that $S_n$ or $S_n^{\text{sq}}$ is less than or equal to 1 with the same probability for $D_\theta$. The lemma below shows that for $\theta > 1$, their absolute difference is at least of the order of $1/\log p_n$. Its proof involves a careful estimation of the probabilities $\mathbb P(S_n\le 1)$ and $\mathbb P(S_n^{\text{sq}}\le 1) $ using the Selberg-Delange method from analytic number theory (see e.g.\ \cite{tet}), and can be found in Appendix \ref{app:lemlbpf}.
\begin{lemma}\label{lem:snlb}
	For $\theta > 1$,
	\begin{equation}\label{eq:snlbint}
		\mathbb P(S_n\le 1) = \mathbb P(D_\theta\le 1) + \frac{C_\theta}{\log p_n} + \mathcal{O}\left(\frac{\log \log p_n}{(\log p_n)^{\min \{2, \theta\}}}\right),
	\end{equation}
	and
	\begin{equation}\label{eq:snsqlbint}
		\mathbb P(S_n^{\text{sq}}\le 1) = \mathbb P(D_\theta\le 1) + \frac{C_\theta^{\text{sq}}}{\log p_n} + \mathcal{O}\left(\frac{\log \log p_n}{(\log p_n)^{\min \{2, \theta\}}}\right),
	\end{equation}
	where $C_\theta = \frac{\theta^2 \gamma e^{-\gamma\theta}}{\Gamma(\theta+1)}$ and $C_\theta^{\text{sq}} = \frac{c_\theta^{\text{sq}} e^{-\gamma\theta}}{\Gamma(\theta+1)} \neq 0$, with $\theta^2 \gamma \le c_\theta^{\text{sq}} \in (0,\infty)$ a constant depending only on $\theta$.
\end{lemma}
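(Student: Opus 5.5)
We would compute $\mathbb P(S_n\le 1)=\mathbb P_n(M_n\le p_n)$ directly as a sum over integers. Since every integer $N\le p_n$ has all prime factors at most $p_n$, we have $\{N\in\Omega_n: N\le p_n\}=\{1,\dots,p_n\}$. Hence, by \eqref{eq:harmonic_measure} and \eqref{eq:Zn},
$$
\mathbb P(S_n\le 1)=\prod_{k=1}^n\Big(1-\frac1{p_k}\Big)^{\theta}\sum_{N\le x}\frac{\tau_\theta(N)}{N},\qquad x=p_n .
$$
Similarly, using \eqref{eq:ZnSq},
$$
\mathbb P(S_n^{\text{sq}}\le 1)=\prod_{k\le n}\Big(1+\frac{\theta}{p_k}\Big)^{-1}\sum_{N\le x}\mu^2(N)\frac{\theta^{\omega(N)}}{N}.
$$
The two factors will be expanded separately and the expansions multiplied. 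The first factor is handled by Mertens' theorem in a sharp form, $\prod_{p\le x}(1-1/p)=e^{-\gamma}(\log x)^{-1}\big(1+\mathcal O(e^{-c\sqrt{\log x}})\big)$. This gives $e^{-\gamma\theta}(\log x)^{-\theta}(1+o((\log x)^{-2}))$. For the square-free case we write $(1+\theta/p)^{-1}=(1-1/p)^{\theta}\cdot(1+\theta/p)^{-1}(1-1/p)^{-\theta}$. The correction product converges to a constant $A_\theta$, and the error from its tail beyond $x$ is $\mathcal O(1/x)$, which is negligible.

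The second factor comes from the Selberg--Delange method (Tenenbaum, Ch.~II.5). The Dirichlet series $\sum\tau_\theta(n)n^{-s}=\zeta(s)^\theta$ gives
$$
\sum_{n\le y}\tau_\theta(n)=\frac{y(\log y)^{\theta-1}}{\Gamma(\theta)}\Big(1+\frac{\lambda_1}{\log y}+\mathcal O\big((\log y)^{-2}\big)\Big),
$$
with $\lambda_1$ explicit in terms of $\gamma$: expanding $((s-1)\zeta(s))^\theta=1+\theta\gamma(s-1)+\dots$ yields $\lambda_1=\theta\gamma(\theta-1)$. Partial summation then turns this into the logarithmic sum
$$
\sum_{N\le x}\frac{\tau_\theta(N)}{N}=\frac{(\log x)^\theta}{\Gamma(\theta+1)}+\frac{\theta\gamma(\log x)^{\theta-1}}{\Gamma(\theta)}+c'_\theta+\mathcal O\big((\log x)^{\theta-2}\big).
$$
The constant $c'_\theta$ is harmless, because after multiplying by $(\log x)^{-\theta}$ it contributes only $\mathcal O((\log x)^{-\theta})$. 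This is exactly where the $\min\{2,\theta\}$ in the error term comes from, and why we need $\theta>1$: only then is $(\log x)^{-\theta}=o((\log x)^{-1})$. Multiplying out gives
$$
\frac{e^{-\gamma\theta}}{\Gamma(\theta+1)}+\frac{\theta^2\gamma e^{-\gamma\theta}}{\Gamma(\theta+1)\log x}+\dots
$$
The leading term must equal $\mathbb P(D_\theta\le1)$. This is standard: $p_\theta(x)=e^{-\gamma\theta}x^{\theta-1}/\Gamma(\theta)$ on $[0,1]$, and integrating gives $e^{-\gamma\theta}/\Gamma(\theta+1)$. The identity can also be read off from the $\theta$-weighted harmonic model itself. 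For the square-free model the Dirichlet series is $F(s)=\prod_p(1+\theta p^{-s})=\zeta(s)^\theta G(s)$, with $G$ holomorphic for $\Re s>1/2$. This produces the same structure, and the first-order coefficient $c^{\text{sq}}_\theta$ collects $\theta^2\gamma$ plus a contribution from $G'(1)/G(1)$, after the constant $A_\theta$ cancels against $G(1)$.

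The main obstacle is the bookkeeping of the Selberg--Delange second-order term and the partial-summation constant, so that the $1/\log x$ coefficient comes out precisely. We also need to justify the $\log\log p_n$ loss, which we would attribute to using a weaker Mertens/prime-sum error uniformly. To show $c^{\text{sq}}_\theta\ge\theta^2\gamma>0$, we would compute $G'(1)/G(1)=\sum_p \log p\,\big(\theta/(p-1)\cdot\ldots\big)$ explicitly, check that it is nonnegative, and check that it enters with a positive sign. This sign check is the step we are least sure of.
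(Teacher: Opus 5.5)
Your route is the paper's: write $\mathbb P(S_n\le 1)=S(p_n)/Z_n$, apply Selberg--Delange to $T(y)=\sum_{m\le y}\tau_\theta(m)$, pass to $S(x)=\sum_{m\le x}\tau_\theta(m)/m$ by Abel summation, and expand $Z_n$ with the sharp Mertens product. In the square-free case you factor $F=\zeta^\theta G$, and $G(1;\theta)$ cancels against the convergent correction product.

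However, the step you flag as the main obstacle contains a concrete error. In the Selberg--Delange theorem the coefficients come from $Z(s;\theta)=((s-1)\zeta(s))^\theta/s$, where the $1/s$ is the Perron kernel. Hence $Z'(1;\theta)=\theta\gamma-1$, and the coefficient you call $\lambda_1$ (the paper's $\alpha_1=\lambda_1/\lambda_0$) equals $(\theta-1)(\theta\gamma-1)$, not $\theta\gamma(\theta-1)$. For $\theta=2$ this is Dirichlet's $\sum_{n\le y}d(n)=y\log y+(2\gamma-1)y+O(\sqrt y)$.

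Your displayed asymptotic for the logarithmic sum is nevertheless correct, but it does not follow from your $\lambda_1$. In $S(x)=T(x)/x+\int_1^x T(t)t^{-2}\,\dd t$, the boundary term $T(x)/x$ contributes $(\log x)^{\theta-1}/\Gamma(\theta)$, exactly at the order that determines $C_\theta$. With your $\lambda_1$, a faithful partial summation gives the coefficient $(\theta\gamma+1)/\Gamma(\theta)$, hence the wrong constant $\theta(\theta\gamma+1)e^{-\gamma\theta}/\Gamma(\theta+1)$. With the correct $\lambda_1$, the $-1$ coming from $1/s$ cancels this boundary term exactly. That leaves $\theta\gamma/\Gamma(\theta)$ and $c_\theta=\theta^2\gamma$, as in the paper. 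The same bookkeeping in the square-free case gives $c_\theta^{\text{sq}}=\theta\bigl(\theta\gamma+G'(1;\theta)/G(1;\theta)\bigr)$.

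The sign check you are unsure about follows directly from the logarithmic derivative of the Euler product:
$\frac{G'}{G}(s;\theta)=\sum_p\frac{\theta(\theta+1)p^{-2s}\log p}{(1-p^{-s})(1+\theta p^{-s})}$.
This is positive at $s=1$, so $c_\theta^{\text{sq}}\ge\theta^2\gamma$.

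Finally, the $\log\log p_n$ is not a Mertens loss; that expansion has relative error $O(e^{-b\sqrt{\log x}})$. There is also nothing to justify, since a weaker $O$-term is implied by a stronger one. In the paper it appears only at $\theta=2$, from integrating the $N=1$ Selberg--Delange remainder $O(u^{\theta-3})$ during partial summation. So your claimed $O((\log x)^{\theta-2})$ remainder for the logarithmic sum is not what this argument yields at $\theta=2$. This is harmless, because the stated error term absorbs it.
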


\begin{proof}[Proof of lower bounds in Theorem \ref{thm:harmonic}]
	
	We start by proving the lower bound on $d_K(S_n,D_\theta)$. For $\theta > 1$, the lower bounds follows directly from Lemma \ref{lem:snlb}, upon noting that that $\log p_n \sim \log n$ due to the Prime number theorem, and
	$$
	d_K(S_n,D_\theta)\ge \bigl|\mathbb P(S_n\le 1)-\mathbb P(D_\theta\le 1)\bigr|
	\quad \text{
	and} \quad 
	d_K(S_n^{\text{sq}},D_\theta)\ge \bigl|\mathbb P(S_n^{\text{sq}}\le 1)-\mathbb P(D_\theta\le 1)\bigr|.
	$$
	
	Next we consider the case when $\theta \le 1$. Because $D_\theta$ is absolutely continuous, $\mathbb{P}(D_\theta \le 0) = \P(D_\theta=0)= 0$. On the other hand, $S_n = 0$ if and only $X_k = 0$ for all $k \in [n]$. By independence, and applying Mertens' third theorem, we thus have
	$$
	\mathbb{P}(S_n = 0) = \prod_{k=1}^n \mathbb{P}(X_k = 0) = \prod_{k=1}^n \left(1 - \frac{1}{p_k}\right)^\theta \asymp \left( \frac{e^{-\gamma}}{\log p_n} \right)^\theta.
	$$
	Arguing similarly, for the square-free measure, applying the inequality $1/(1 + x) \ge e^{-x}$ for $x > 0$, we obtain
	$$
	\mathbb{P}(S_n^{\text{sq}} = 0) =\prod_{k=1}^n \mathbb{P}(X_k^{\text{sq}} = 0) = \prod_{k=1}^n \frac{p_k}{p_k+\theta} \ge  \prod_{k=1}^n \exp\left(-\frac{\theta}{p_k}\right) = \exp\left( -\theta \sum_{k=1}^n \frac{1}{p_k} \right).
	$$
	By Mertens' Second Theorem, the sum of the reciprocals of primes satisfies $\sum_{k=1}^n \frac{1}{p_k} = \log \log p_n + M + o(1)$, where $M$ is the Meissel-Mertens constant. Substituting this yields
	$$
	\mathbb{P}(S_n^{\text{sq}} = 0) \ge \exp\big(-\theta(\log \log p_n + M + o(1))\big) = \frac{e^{-\theta M}}{(\log p_n)^\theta} (1+o(1)).
	$$
	Therefore, using $\mathbb{P}(D_\theta \le 0) =0$ and that $\log p_n \sim \log n$, we obtain
	\begin{align*}
		d_K(S_n, D_\theta), \; d_K(S_n^{\text{sq}}, D_\theta) &\ge \min \Big\{ \mathbb{P}(S_n \le 0), \mathbb{P}(S_n^{\text{sq}} \le 0) \Big\}= \Omega\left( \frac{1}{(\log p_n)^\theta} \right) = \Omega\left( \frac{1}{(\log n)^\theta} \right), 
	\end{align*}
	yielding the asserted lower bound for the Kolmogorov distance.

	Finally, we establish the lower bounds for the $d_{1,1}$ distance. Since the identity test function $h(x) = x$ is in $\mathcal{H}_{1,1}$, we obtain the lower bound
	\begin{equation}\label{eq:d11_mean_lb}
		d_{1,1}(S_n, D_\theta) = \sup_{f \in \mathcal{H}_{1,1}} \big|\E[f(S_n)] - \E[f(D_\theta)]\big| \ge \big|\E[S_n] - \E[D_\theta]\big| = \big|\E[S_n] - \theta\big|.
	\end{equation}
	From Lemma \ref{lem:prime_exponents}, under the measure $\P_n$ we have
	$$
	\E[S_n] = \frac{1}{\log p_n} \sum_{k=1}^n \E[X_k] \log p_k = \frac{\theta}{\log p_n} \sum_{k=1}^n \frac{\log p_k}{p_k - 1} =\frac{\theta}{\log p_n} \left(\sum_{k=1}^n \frac{\log p_k}{p_k} + \sum_{k=1}^n \frac{\log p_k}{p_k(p_k - 1)}\right).
	$$
	Here we use the following stronger version of Merten’s theorem, see e.g.\ \cite{Finch}: For $n \ge 1$,
	\begin{equation}\label{eq:Mertenst}
		\sum_{k=1}^n \log \left(p_k\right) / p_k=\log p_n +R_n \quad \text{with } \quad \lim _{n \rightarrow \infty} R_n=-\gamma-\sum_{k=1}^{\infty} \frac{\log p_k}{p_k \left(p_k-1\right)}=: R =-1.33 \ldots,
	\end{equation}
	where $\gamma$ is the Euler-Mascheroni constant. Thus, 
	$$
	\E [S_n] = \frac{\theta}{\log p_n} \left(\log p_n +R_n + \sum_{k=1}^n \frac{\log p_k}{p_k(p_k - 1)}\right) = \frac{\theta}{\log p_n} \left(\log p_n -\gamma -R +o(1) \right).
	$$
	Because $\gamma \approx 0.577$, by \eqref{eq:d11_mean_lb} we obtain
	$$
	d_{1,1}(S_n, D_\theta) \ge \big|\E[S_n] - \theta\big| = \frac{-R-\gamma}{\log p_n} + o\left(\frac{1}{\log p_n}\right) = \Omega \left(\frac{1}{\log n}\right),
	$$
	where the final step is due to the fact that $\log p_n \sim \log n$, which is a consequence of the Prime number theorem. This yields the assertion for $\P_n$.

	An analogous argument holds for the square-free measure $\mathbb{P}_n^{\text{sq}}$. By Lemma \ref{lem:prime_exponents}, we have $\E[X_k^{\text{sq}}] = \frac{\theta}{p_k + \theta}$, so that
	$$
	\E[S_n] = \frac{\theta}{\log p_n} \sum_{k=1}^n \frac{\log p_k}{p_k +\theta} =\frac{\theta}{\log p_n} \left(\sum_{k=1}^n \frac{\log p_k}{p_k} - \theta \sum_{k=1}^n \frac{\log p_k}{p_k(p_k+\theta)} \right).
	$$
	Because $p_k(p_k+\theta) \ge p_k^2$, by \eqref{eq:intub}, the second series converges absolutely to a positive finite constant $C_\theta := \sum_{k=1}^\infty \frac{\log p_k}{p_k(p_k+\theta)} \in (0,\infty)$. Thus using \eqref{eq:Mertenst} to note that $\sum_{k=1}^n \frac{\log p_k}{p_k} = \log p_n + R + o(1)$, we obtain
	$$
	\E[S_n] = \frac{\theta}{\log p_n} \big( \log p_n + R - \theta C_\theta + o(1) \big) = \theta + \frac{\theta(R - \theta C_\theta)}{\log p_n} + o\left(\frac{1}{\log p_n}\right).
	$$
	Because $R \approx -1.33 < 0$ and $\theta C_\theta > 0$, the sum of the constants $R - \theta C_\theta$ is strictly negative. This yields
	$$
	d_{1,1}(S_n, D_\theta) \ge \big|\E[S_n] - \theta\big| = \frac{\theta|R - \theta C_\theta|}{\log p_n} + o\left(\frac{1}{\log p_n}\right) = \Omega\left(\frac{1}{\log n}\right).
	$$
	As before, the final step uses the Prime Number Theorem relation $\log p_n \sim \log n$. This concludes the proof for $\mathbb{P}_n^{\text{sq}}$.
\end{proof}

\subsection{Proof of Theorem \ref{thm:unified_weights}}\label{sec:proofweights}

	
	Since the weights $\mathbf{W}$ satisfy assumption \ref{a2} and are independent of the mutually independent sequence $(X_k)_{k \in [n]}$, we apply Corollary \ref{cor:indep_weights} to obtain the upper bound on the Kolmogorov distance. Because $X_k \sim \text{Poi}(\theta/k)$, we have $X_k^* \stackrel{d}{=} X_k + 1$, yielding $d_{TV}(X_k^*, X_k+1) = 0, k \in [n]$. Thus, the aggregate error $\eps_n$ in  Corollary \ref{cor:indep_weights} simplifies to
	\begin{equation}\label{eq:eps_unified}
		\eps_n = \delta_n + (2/\theta) D_n^* + |W_{\max}^{(n)} - 1|.
	\end{equation}
	
	By \eqref{def:delD}, the maximum increment is given by $\delta_n = (1/Z_n) \max_{k \in [n]} Y_k$. Next, consider the centered process $M_k := \sum_{i=1}^k (Y_i - 1), k \in \N$, and note
	$$
	|W_{\max}^{(n)}- 1| = \left| \frac{S_n^Y}{Z_n} - 1 \right| = 
	\begin{cases}
		\frac{|M_n|}{n}, & \text{if $Z_n = n$\quad (Model A),} \\
		0, & \text{if $Z_n = S_n^Y$ (Model B)}.
	\end{cases}
	$$
	Third, we evaluate the discrepancy term $D_n^* = \max_{k \in [n]} |D_{n,k}|$. Since $\lambda_j = \E X_j = \theta/j, W_{n,j} = (1/Z_n) \sum_{i=1}^j Y_i$, and $\Delta_{n,j} = \theta Y_j / Z_n$, $j \in [n]$, by \eqref{def:delD} we have for $k \in [n]$ that
	\begin{equation*}
		D_{n,k} = \frac{\theta}{Z_n} \sum_{j=1}^k \Big( \frac{1}{j} \sum_{i=1}^j Y_i - Y_j \Big) = \frac{\theta}{Z_n} \sum_{j=1}^k \Big( \frac{M_j + j}{j} - Y_j \Big) = \frac{\theta}{Z_n} \Big( \sum_{j=1}^k \frac{M_j}{j} - M_k \Big) := \frac{\theta}{Z_n} A_k.
	\end{equation*}
	Notice that $A_k$ is identical for both models A and B. Thus, \eqref{eq:eps_unified} yields the representation
	\begin{equation}\label{eq:eps_n_representation}
		\eps_n = \delta_n + (2/\theta) D_n^* + |W_{\max}^{(n)} - 1| = \frac{1}{Z_n} \max_{k \in [n]} Y_k + \frac{2}{Z_n} \max_{k \in [n]} |A_k| + \frac{|M_n|}{n} \ind_{\{\text{Model A}\}}.
	\end{equation}
	Denoting
	\begin{equation}\label{eq:B_n}
		B_n:=\frac{\log n}{n^2}\sum_{i=1}^n \sigma_i^2 + \frac{1}{n}\Big(1 + \sum_{i=1}^n \sigma_i^2\Big)^{1/2} + \frac{1}{n} \sum_{j=1}^n \frac{1}{j} \Big(\sum_{i=1}^j \sigma_i^2\Big)^{1/2},
	\end{equation}
	the asserted Kolmogorov bound in Theorem \ref{thm:unified_weights} now follows by an application of Corollary \ref{cor:indep_weights}, and Lemma \ref{lem:expected_error_unified} below which shows that $\E [\eps_n] = \mathcal{O}(B_n)$. 
	
	Finally, we establish the upper bound for the $d_{1,1}$ distance. Since the random weights are ordered, we make the canonical grid choice $t_k = W_{n,k}$ for $k \in [n]$, with $t_0 = 0$. This ensures that $|W_{n,k} - t_k| = 0$ for all $k \in [n]$ as well as $\delta_n^{(t)} = \delta_n$ and $D_n^{(t)*} = D_n^*$.
	
	Furthermore, because $X_k \sim \operatorname{Poi}(\theta/k)$, we have $X_k^* \stackrel{d}{=} X_k + 1$, so that $d_1(X_k^*, X_k+1) = 0$. Substituting $t_n = W_{\max}^{(n)}$, the aggregate error in \eqref{eq:eps_n_d11} thus collapses to
	\begin{align}\label{eq:eps_d11_decoupled}
		\varepsilon_n^{1,1}(\mathbf{W}) &= \frac{\theta}{4} W_{\max}^{(n)} \delta_n + \left(1 + \frac{1}{2}W_{\max}^{(n)}\right) D_n^* + \theta |W_{\max}^{(n)} - 1| \nonumber\\
		&\le \frac{\theta}{4}\delta_n + \frac{3}{2}D_n^* + \theta|W_{\max}^{(n)} - 1| + |W_{\max}^{(n)} - 1| \left( \frac{\theta}{4} \delta_n + \frac{1}{2} D_n^* \right),
	\end{align}
	where the final step follows by noting $W_{\max}^{(n)} \le 1 + |W_{\max}^{(n)} - 1|$.
	
	To bound the expectation of \eqref{eq:eps_d11_decoupled}, observe that the sum $\frac{\theta}{4}\delta_n + \frac{3}{2}D_n^* + \theta|W_{\max}^{(n)} - 1|$ is bounded by $C_\theta \varepsilon_n$ for some constant $C_\theta \in (0,\infty)$ depending only on $\theta$, where $\varepsilon_n$ is the aggregate error defined in \eqref{eq:eps_n_representation}. Hence, its expectation is of the order of $B_n$ by Lemma \ref{lem:expected_error_unified} below. Since $|W_{\max}^{(n)} - 1| = 0$ in Model B, the final term in \eqref{eq:eps_d11_decoupled} vanishes completely for Model B. In Model A, we have $|W_{\max}^{(n)} - 1| = |M_n|/n$. By the Cauchy-Schwarz inequality, we thus obtain for Model A that
	\begin{equation}\label{eq:CS_cross}
		\mathbb{E}\big[|W_{\max}^{(n)} - 1| \, \delta_n\big] \le \left\|M_n/n\right\|_2 \|\delta_n\|_2, \quad \text{and} \quad \mathbb{E}\big[|W_{\max}^{(n)} - 1| D_n^*\big] \le \left\|M_n/n\right\|_2 \, \|D_n^*\|_2,
	\end{equation}
	where $\|\cdot\|_2$ denotes the $L^2$-norm. Let us now make a few observations to bound these $L^2$-norms. First, using $\max_{k \in [n]} Y_k \le 1+ \max_{k \in [n]} |Y_k-1|$, we obtain
	\begin{equation}\label{eq:deln2}
		\E\Big[(\max_{k \in [n]} Y_k)^2\Big] \le 2+2 \E\Big[\max_{k \in [n]} \,(Y_k -1)^2\Big] \le 2 + 2\sum_{k=1}^n \E (Y_k-1)^2 = 2 + 2\sum_{k=1}^n \sigma_k^2.
	\end{equation}
	Also, by the triangle inequality, we can bound
	\begin{equation}\label{eq:Akbd}
		\big\|\max_{k \in [n]} |A_k| \big\|_2 \le \Big\|\max_{k \in [n]} \Big|\sum_{j=1}^k \frac{M_j}{j}\Big| \Big\|_2 + \big\|\max_{k \in [n]} |M_k| \big\|_2  \le \sum_{j=1}^n \frac{\|M_j\|_2}{j}  + \big\|\max_{k \in [n]} |M_k| \big\|_2.
	\end{equation}
	Since $M_k = \sum_{i=1}^k (Y_i-1)$ for $k \in \N$ is a martingale, Doob's $L^2$-Maximal Inequality guarantees 
	\begin{equation}\label{eq:Doob}
		\big\|\max_{k \in [n]} |M_k| \big\|_2 \le 2\sqrt{\E[M_n^2]} = 2\Big(\sum_{i=1}^n \sigma_i^2\Big)^{1/2}.
	\end{equation}
	For Model A, since $Z_n = n$ deterministically, we therefore obtain
	$$
	\|\delta_n\|_2 \le \frac{1}{n} \big\|\max_{k \in [n]} Y_k\big\|_2 \le \frac{1}{n}\Big(2 + 2\sum_{i=1}^n \sigma_i^2\Big)^{1/2} = \mathcal{O}(B_n),
	$$
	and
	\begin{align*}
		\|D_n^*\|_2 \le \frac{\theta}{n} \big\|\max_{k \in [n]} |A_k|\big\|_2 &\le \frac{\theta}{n} \left(\sum_{j=1}^n \frac{1}{j} \Big(\sum_{i=1}^j \sigma_i^2\Big)^{1/2} + 2\Big(\sum_{i=1}^n \sigma_i^2\Big)^{1/2}\right) = \mathcal{O}(B_n).
	\end{align*}
	Thus, from \eqref{eq:eps_d11_decoupled} and \eqref{eq:CS_cross}, noting $\|M_n/n\|_2 = \frac{1}{n}(\sum_{i=1}^n \sigma_i^2)^{1/2}$ yields
	\begin{equation*}
		\mathbb{E}[\varepsilon_n^{1,1}(\mathbf{W})] \le C_{\theta}' \left(1 + \frac{1}{n}\Big(\sum_{i=1}^n \sigma_i^2\Big)^{1/2} \right) B_n,
	\end{equation*}
	for some constant $C_\theta' \in (0,\infty)$ depending on $\theta$. The desired upper bound now follows from Theorem \ref{thm:d11_bound}, concluding the proof.
	
	\begin{lemma} \label{lem:expected_error_unified}
		For $\eps_n$ defined in \eqref{eq:eps_n_representation} and $B_n$ as in \eqref{eq:B_n}, we have
		$$
		0 \le \E[\eps_n] = \mathcal{O}(B_n).
		$$
	\end{lemma}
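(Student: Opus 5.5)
We bound the three terms of \eqref{eq:eps_n_representation} separately, treating Model A ($Z_n=n$) and Model B ($Z_n=S_n^Y$) in parallel. Non-negativity of $\eps_n$ is clear. In Model A, $1/Z_n = 1/n$ is deterministic, so we only need the expectations of the numerators. First, by \eqref{eq:deln2} and Jensen,
\[
\E\big[\max_k Y_k\big]\le \big(2+2\textstyle\sum_i\sigma_i^2\big)^{1/2}.
\]
Second, by \eqref{eq:Akbd} and \eqref{eq:Doob},
\[
\E\big[\max_k|A_k|\big]\le \textstyle\sum_{j}\frac1j\big(\sum_{i\le j}\sigma_i^2\big)^{1/2}+2\big(\sum_i\sigma_i^2\big)^{1/2}.
\]
Third, $\E|M_n|/n\le (\sum_i\sigma_i^2)^{1/2}/n$. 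After dividing by $n$, all three are $\mathcal{O}(B_n)$, which settles Model A.

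For Model B the difficulty is the random denominator $S_n^Y=n+M_n$, which may be small. Write
\[
\frac{1}{S_n^Y}=\frac1n+\frac{n-S_n^Y}{nS_n^Y}.
\]
Each numerator $N\in\{\max_kY_k,\ \max_k|A_k|\}$ then satisfies
\[
\frac{N}{S_n^Y}\le \frac{N}{n}+\frac{|M_n|}{n}\cdot\frac{N}{S_n^Y}.
\]
The first piece is handled as in Model A. For the correction, use the deterministic bounds $\max_kY_k\le S_n^Y$ and
\[
|A_k|\le \textstyle\sum_{j\le k}\frac{|M_j|}{j}+|M_k|.
\]
Since $|M_j|\le S_j^Y+j\le S_n^Y+j$, we get $\max_k|A_k|/S_n^Y\le C(\log n)(1+n/S_n^Y)$. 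This crude bound is not enough, so I would split on the event $E=\{S_n^Y\ge n/2\}$.

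On $E$, $N/S_n^Y\le 2N/n$. The correction is then at most $2|M_n|N/n^2$, whose expectation is bounded by Cauchy–Schwarz using $\|M_n\|_2$, $\|\max_k Y_k\|_2$ and $\|\max_k|A_k|\|_2$. This gives terms of order
\[
\frac{1}{n^2}\textstyle\sum_i\sigma_i^2\;+\;\frac{1}{n^2}\big(\sum_i\sigma_i^2\big)^{1/2}\sum_j\frac1j\big(\sum_{i\le j}\sigma_i^2\big)^{1/2}.
\]
I would try to absorb these into $B_n$ using $(\sum_{i\le j}\sigma_i^2)^{1/2}\le(\sum_{i\le n}\sigma_i^2)^{1/2}$. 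That yields the $\frac{\log n}{n^2}\sum_i\sigma_i^2$ term, which is presumably the origin of the $\log n$ factor in $B_n$.

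On $E^c$, the key observation is that $\delta_n\le1$ and $D_n^*$ are bounded by deterministic constants times $\log n$. Indeed, from $|A_k|\le\sum_{j\le k}(S_j^Y/j+1)+S_k^Y+k$ and $S_j^Y\le S_n^Y$ one gets
\[
\frac{|A_k|}{S_n^Y}\le C\log n+\frac{Cn}{S_n^Y},
\]
but the $n/S_n^Y$ part is problematic. I would instead bound the $j$-terms more carefully, using $\sum_{j\le k}M_j/j-M_k=\sum_{j\le k}S_j^Y/j-S_k^Y$ because the $-k$ and $+\sum 1$ contributions cancel. This gives $|A_k|\le S_n^Y(1+\log n)$, so $\eps_n\le C\log n$ deterministically. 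Then, by Chebyshev, $\E[\eps_n\ind_{E^c}]\le C\log n\,\P(|M_n|>n/2)\le 4C\frac{\log n}{n^2}\sum_i\sigma_i^2$, which is again of order $B_n$.

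The main obstacle is this Model B analysis: both getting the cancellation that makes $\max_k|A_k|/S_n^Y$ deterministically $\mathcal{O}(\log n)$, and verifying that the Cauchy–Schwarz cross terms on $E$ fit inside $B_n$. If the cross term does not fit directly, I would fall back on bounding $\|\max_k|A_k|\|_2$ by $C(\log n)(\sum_i\sigma_i^2)^{1/2}$, which leads to the $\frac{\log n}{n^2}\sum_i\sigma_i^2$ term.
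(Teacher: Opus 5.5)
Your proposal is correct and follows essentially the paper's argument. You split on $\{S_n^Y \ge n/2\}$. On the complement you use the deterministic Model B bound $\eps_n \le 1 + 2(1+H_n)$ together with Chebyshev. On the good event you use $1/S_n^Y \le 2/n$ with the $L^2$ bounds \eqref{eq:deln2}, \eqref{eq:Akbd} and \eqref{eq:Doob}, just as the paper does. Two remarks: the decomposition $1/S_n^Y = 1/n + (n-S_n^Y)/(nS_n^Y)$ is an unnecessary detour, since on $E$ the direct bound $N/S_n^Y \le 2N/n$ already suffices; and the $\frac{\log n}{n^2}\sum_i \sigma_i^2$ term in $B_n$ actually comes from the $E^c$ contribution, as your own $E^c$ computation shows.
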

	
	\begin{proof}
		Define the event $\mathcal{E} = \{Z_n > n/2\}$. For Model A, $Z_n = n$ whence $\mathcal{E}^c$ is empty. For Model B, since $Z_n = S_n^Y$, note by Chebyshev's inequality that 
		\begin{equation}\label{eq:tailbd}
			\P(\mathcal{E}^c) \le \P(|S_n^Y - n| \ge n/2) \le 4\sum_{i=1}^n \sigma_i^2 / n^2.
		\end{equation}
		In Model B, we trivially have $|W_{\max}^{(n)} - 1| = 0$. Moreover, for all $j \in [n]$,
		$$
		\delta_n = \max_{j \in [n]}(W_{n,j} - W_{n,j-1})= \max_{j \in [n]} Y_j/S_n^Y\le 1.
		$$ 
		Note that $\sum_{j=1}^n \Delta_{n,j} = \theta$ and $W_{n,j} \le 1$ for $j \in [n]$ in Model B. We can thus bound
		$$
		\eps_n \le 1+ \frac{2}{\theta}D_n^* \le 1+ \frac{2}{\theta} \left(\sum_{j=1}^n \lambda_j W_{n,j} + \sum_{j=1}^n \Delta_{n,j}\right) \le 1+ \frac{2}{\theta} \left(\theta \sum_{j=1}^n \frac{1}{j} + \theta \right)= \mathcal{O}(\log n),
		$$
		where we have also used $\lambda_j = \theta/j$, $j \in [n]$. Consequently, \eqref{eq:tailbd} yields that for both models,
		\begin{equation}\label{eq:erEc}
			\E[\eps_n \ind_{\mathcal{E}^c}] = \mathcal{O}\left(\frac{\log n}{n^2} \sum_{i=1}^n \sigma_i^2 \right).
		\end{equation}
		
		We thus restrict our remaining analysis to $\mathcal{E}$. We bound the expectation of the three components in \eqref{eq:eps_n_representation} in this case separately. Applying the Cauchy-Schwarz inequality and noting that $Z_n \ge n/2$ on $\mathcal{E}$, we obtain
		\begin{multline*}
			\E\left[\frac{1}{Z_n} \max_{k \in [n]} Y_k \ind_{\mathcal{E}}\right] \le \left( \E\Big[(\max_{k \in [n]} Y_k)^2\Big] \right)^{1/2} \left( \E\big[Z_n^{-2} \ind_{\mathcal{E}}\big] \right)^{1/2} \\
			\le \frac{2}{n} \left( \E\Big[(\max_{k \in [n]} Y_k)^2\Big] \right)^{1/2} \le \frac{2}{n} \Big(2 + 2\sum_{k=1}^n \sigma_k^2\Big)^{1/2} = \mathcal{O}\left( \frac{1}{n}\Big(1 + \sum_{k=1}^n \sigma_k^2\Big)^{1/2} \right),
		\end{multline*}
		where the penultimate step is due to \eqref{eq:deln2}. Similarly  using the Cauchy-Schwarz inequality along with \eqref{eq:Akbd} and \eqref{eq:Doob}, we can bound
		\begin{multline*}
			\E\left[\frac{1}{Z_n} \max_{k \in [n]} |A_k| \ind_{\mathcal{E}}\right] \le \left( \E\left[ \max_{k \in [n]} |A_k|^2 \right] \right)^{1/2} \left( \E\big[Z_n^{-2} \ind_{\mathcal{E}}\big] \right)^{1/2} \le \frac{2}{n}\, \big\|\max_{k \in [n]} |A_k| \big\|_2\\
			\le \frac{2}{n} \Bigg( \sum_{j=1}^n \frac{\|M_j\|_2}{j}  + \big\|\max_{k \in [n]} |M_k| \big\|_2\Bigg) \le \frac{2}{n} \Bigg(  \sum_{j=1}^n \frac{1}{j} \Big(\sum_{i=1}^j \sigma_i^2\Big)^{1/2} + 2\Big(\sum_{i=1}^n \sigma_i^2\Big)^{1/2} \Bigg),
		\end{multline*}
		where we have also used the fact that 
		$
		\sum_{j=1}^n \|M_j\|_2 / j= \sum_{j=1}^n \frac{1}{j} \big(\sum_{i=1}^j \sigma_i^2\big)^{1/2}.
		$
		
		Finally, for Model A, we can bound
		$
		\E[|M_n|]/n \le \|M_n\|_2 / n = \frac{1}{n} \big(\sum_{i=1}^n \sigma_i^2\big)^{1/2}.
		$
		Summing the expected errors over $\mathcal{E}^c$ from \eqref{eq:erEc} and over $\mathcal{E}$ in the three different components in \eqref{eq:eps_n_representation} from the bounds above, we obtain the result.
	\end{proof}

	\subsection{Proof of Theorem \ref{thm:cue_geom}}\label{sec:CUEproofs}
	We start by proving Lemma \ref{lem:cue_rigidity}. For economy of notation, we denote $\Delta_n = \max_{k \in [n-1]} |W_{n,k} - k/n|$.
	 \begin{proof}[Proof of Lemma \ref{lem:cue_rigidity}] By the triangle inequality, we can bound
		\begin{equation}\label{eq:CUEDel}
			\Delta_n = \max_{k \in [n-1]} \left| \frac{\theta_{k+1} - \theta_1}{2\pi} - \frac{k}{n} \right| \le \max_{k \in [n-1]} \left| \frac{\theta_{k+1}}{2\pi} - \frac{k+1}{n} \right| + \left| \frac{\theta_1}{2\pi} - \frac{1}{n} \right| \le 2 \max_{j \in [n]} \left| \frac{\theta_j}{2\pi} - \frac{j}{n} \right|.
		\end{equation}
		Let $\mu_n$ denote the empirical spectral measure of the CUE matrix, and $\nu$ denote the uniform probability measure on the unit circle in the complex plane. By \cite[Theorem 1]{MM19}, we have 
		\begin{equation}\label{eq:Meckes}
			\E[d_K(\mu_n, \nu)] \le c \frac{\log n}{n}
		\end{equation}
		for a universal constant $c \in (0,\infty)$; here $d_K(\mu_n, \nu)$ denotes the Kolmogorov distance. Let $\mathcal{N}_s$ count the number of eigenphases in the interval $(0, 2\pi s]$. Since under the Haar measure, $\theta_1>0$ and $\theta_i \neq \theta_j$ for $1 \le i \neq j \le n$ almost surely, we have $\mathcal{N}_{\theta_j/(2\pi)} = j$, and therefore we can bound
		$$
		\max_{j \in [n]} \left| \frac{\theta_j}{2\pi} - \frac{j}{n} \right| = \max_{j \in [n]} \left|\frac{\theta_j}{2\pi} - \frac{\mathcal{N}_{\theta_j/(2\pi)}}{n}\right| \le \sup_{0 \le \theta < 2\pi} \left| \frac{\mathcal{N}_{\theta/(2\pi)}}{n} - \frac{\theta}{2\pi} \right| = d_K(\mu_n, \nu),
		$$
		Taking an expectation in \eqref{eq:CUEDel} now yields via \eqref{eq:Meckes} that $\E[\Delta_n] \le 2 \E[d_K(\mu_n, \nu)] = 2c \frac{\log n}{n}$.
	\end{proof}

By the translation invariance of the Haar measure, evaluating the normalized gap $2\pi W_{n,k} = \theta_{k+1} - \theta_1$, $k \in [n-1]$ is equivalent to conditioning the point process of eigenphases on $[0,2\pi)$ to have an eigenphase exactly at the origin and measuring the distance to the $k$-th subsequent point. Therefore, the distribution of $W_{n,k}$ is governed by the \textit{reduced} Palm measure of the CUE spectra, which we denote as $\P_0$. In particular, the probability that $W_{n,k} \le s$ is the same as the probability that there are at least $k$ points in the interval $(0,2\pi s]$ under $\P_0$. Denoting by $\mathcal{N}_s$ the number of eigenphases in the interval $(0, 2\pi s]$, this translates to
\begin{equation}\label{eq:palm}
	\P(W_{n,k} \le s) = \P_0(\mathcal{N}_s \ge k), \quad \text{for $k \in [n-1]$ and $s \in (0,\infty)$.}
\end{equation}

The following lemma, proved in Appendix \ref{app:cuelb}, estimates the mean of $\mathcal{N}_s$ under $\P_0$.
	\begin{lemma}\label{lem:palm_mean_bounds}
		Let $\mathcal{N}_s$ be as above and let $\mu(s) := \E_0[\mathcal{N}_s]$ be its expectation under the reduced Palm measure. Then, for $s \in (0,1)$, there exists a universal constant $C' \in (0,\infty)$ such that
		\begin{equation}\label{eq:meso-micro}
			\mu(s) \le \min \{ns, C' n^3 s^3\}.
		\end{equation}
	\end{lemma}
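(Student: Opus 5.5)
The plan is to compute $\mu(s)$ through the Palm correlation structure of the CUE determinantal process. Under the Haar measure, the eigenphases form a determinantal point process on $[0,2\pi)$. Its kernel is
$$K_n(x,y)=\frac{1}{2\pi}\frac{\sin(n(x-y)/2)}{\sin((x-y)/2)},$$
with one-point intensity $n/(2\pi)$. For a determinantal process, the reduced Palm measure at the origin is again determinantal, with kernel $K_n(x,y)-K_n(x,0)K_n(0,y)/K_n(0,0)$. Its one-point intensity is therefore
$$\rho_0(x)=\frac{n}{2\pi}-\frac{K_n(x,0)^2}{K_n(0,0)}=\frac{n}{2\pi}\Big(1-\frac{\sin^2(nx/2)}{n^2\sin^2(x/2)}\Big)=\frac{\rho_2(0,x)}{\rho_1(0)}.$$
Equivalently, $\rho_0$ is the two-point correlation function divided by the intensity. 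This gives the exact formula
$$\mu(s)=\int_0^{2\pi s}\rho_0(x)\,\dd x .$$

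The first bound, $\mu(s)\le ns$, is immediate. We have $0\le \rho_0(x)\le n/(2\pi)$, where nonnegativity holds because $\rho_0$ is an intensity (or directly from $|\sin(nx/2)|\le n|\sin(x/2)|$). Integrating over an interval of length $2\pi s$ gives at most $ns$.

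For the second bound, $\mu(s)\le C'n^3s^3$, I will show that $\rho_0(x)\le c\,n^3x^2$ for all $x\in(0,2\pi)$. Write $F_n(x)=\sin^2(nx/2)/(n^2\sin^2(x/2))$, the normalized Fejér kernel. Then
$$1-F_n(x)=\frac{1}{n^2}\sum_{j,k=0}^{n-1}\big(1-\cos((j-k)x)\big)\le \frac{1}{n^2}\sum_{j,k}\frac{(j-k)^2x^2}{2}\le \frac{n^2x^2}{2}.$$
This uses $|\sum_j e^{ijx}|^2=\sum_{j,k}\cos((j-k)x)$ together with $1-\cos y\le y^2/2$. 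Hence $\rho_0(x)\le \frac{n}{2\pi}\cdot\frac{n^2x^2}{2}$, and integrating over $[0,2\pi s]$ gives
$$\mu(s)\le \frac{n^3}{4\pi}\cdot\frac{(2\pi s)^3}{3}=\frac{2\pi^2}{3}\,n^3s^3 .$$
So $C'=2\pi^2/3$ works, which is universal and covers all $s\in(0,1)$.

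The main obstacle is justifying the identification of the law of $(\theta_{k+1}-\theta_1)_k$ with the reduced Palm measure, and hence that $\E_0[\mathcal N_s]$ equals $\int_0^{2\pi s}\rho_2(0,x)/\rho_1(0)\,\dd x$. The paper already asserts this via rotation invariance in \eqref{eq:palm}, so I take it as given. What remains is the standard Campbell-type identity for the Palm intensity of a stationary process, namely that the reduced Palm first intensity is $\rho_2(0,\cdot)/\rho_1$, combined with the explicit determinantal form $\rho_2=\det[K_n(x_i,x_j)]$. All remaining steps are elementary trigonometric estimates.
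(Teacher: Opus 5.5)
Your proof is correct and follows the paper's route: you identify the reduced-Palm one-point intensity as $\rho_2(0,x)/\rho_1(0)=\frac{n}{2\pi}\bigl(1-K(x)^2/K(0)^2\bigr)$, bound it by $n/(2\pi)$ for the first estimate, and bound it by a multiple of $n^3x^2$ for the second before integrating over $[0,2\pi s]$. The only difference is how you get the pointwise $x^2$ bound: the paper Taylor-expands $K$ at $0$ using cited bounds $|K|\le cn$ and $|K''|\le cn^3$, whereas your Fej\'er-sum identity combined with $1-\cos y\le y^2/2$ is self-contained and gives the explicit constant $C'=2\pi^2/3$.
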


	Equipped with these analytical bounds, we can bound the inverse moments of the weights, necessary for the proof of Theorem \ref{thm:cue_geom}.

	\begin{lemma}\label{lem:cue_inverse}
		For the normalized gaps $W_{n,k} = \frac{1}{2\pi}(\theta_{k+1} - \theta_1)$, there exists a universal constant $C \in (0,\infty)$ such that for all $k \in [n-1]$,
		$$
		\E[W_{n,k}^{-1}] \le C\; \frac{n}{k}.
		$$
	\end{lemma}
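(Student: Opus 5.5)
We need $\E[W_{n,k}^{-1}] \le C n/k$. The plan is to use the layer-cake formula
$$
\E[W_{n,k}^{-1}] = \int_0^\infty \P(W_{n,k}^{-1} > t)\,\dd t = \int_0^\infty \P(W_{n,k} < 1/t)\,\dd t ,
$$
and to control the lower tail of $W_{n,k}$ through the Palm representation \eqref{eq:palm}. Since $W_{n,k}$ has a continuous distribution, \eqref{eq:palm} and Markov's inequality under $\P_0$ give
$$
\P(W_{n,k} \le s) = \P_0(\mathcal{N}_s \ge k) \le \frac{\mu(s)}{k}.
$$
Lemma \ref{lem:palm_mean_bounds} then yields, for $s \in (0,1)$,
$$
\P(W_{n,k}\le s) \le \frac{1}{k}\min\{ns, C' n^3 s^3\}.
$$

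Substituting $s = 1/t$ and splitting the range of $t$:
\begin{enumerate}
\item For $t \le n/k$, bound the probability trivially by $1$. This contributes at most $n/k$.
\item For $n/k < t \le n$, use the linear bound $\mu(s)\le ns$. This gives $\P \le n/(kt)$, and integrating produces $(n/k)\log k$. That is off by a log factor, so the linear bound alone is not enough.
\item For $t > n$, use the cubic bound $\mu(s) \le C' n^3 s^3$. This gives $\P \le C' n^3/(k t^3)$, whose integral over $t>n$ is $O(n/k)$. Here $s = 1/t < 1$ automatically, so the lemma applies.
\end{enumerate}
The main obstacle is removing the $\log k$ from the middle range.

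For the middle range I would replace Markov by a sharper lower-tail estimate for $\mathcal{N}_s$ under $\P_0$. The first option is higher factorial moments: for a determinantal process, $\E_0\binom{\mathcal{N}_s}{m} \le \mu(s)^m/m!$ (Hadamard's inequality for the Palm kernel, which is again determinantal). This gives $\P_0(\mathcal{N}_s\ge k) \le (e\mu(s)/k)^k$, or at least $(e\mu(s)/k)^2$ for $k \ge 2$. The bound decays like $t^{-2}$ for $t > n/k$, so the integral over that range is $O(n/k)$.

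Alternatively, and more simply, take $m=2$ with Markov applied to $\mathcal{N}_s(\mathcal{N}_s-1)$, whose expectation is at most $\mu(s)^2$ by negative correlation of determinantal processes under the Palm measure. For $k=1$ the middle range is empty (since $n/k = n$), so only the cubic range matters and the bound holds directly.

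Combining the three ranges gives $\E[W_{n,k}^{-1}] \le C n/k$. The step I would verify most carefully is the factorial-moment inequality under the reduced Palm measure. If it fails, I would fall back on comparing the $k$-th point after $0$ with the rigidity estimate of Lemma \ref{lem:cue_rigidity} for large $k$.
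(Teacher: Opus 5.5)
Your proposal is correct and follows essentially the paper's route: the layer-cake formula with the Palm identity \eqref{eq:palm}, a split at $s\approx 1/n$ and $s\approx k/n$ using the cubic and linear bounds of Lemma \ref{lem:palm_mean_bounds}, and a determinantal tail bound stronger than Markov to remove the $\log k$ loss in the middle range. The only difference is the tail bound: the paper gets $\P_0(\mathcal{N}_s\ge k)\le (e\mu(s)/k)^k$ by a Chernoff bound from the Bernoulli-sum representation of DPP counts and uses it on both ranges, whereas you use the $m=2$ factorial-moment bound (i.e.\ $\rho_2^{(0)}(x,y)\le\rho_1^{(0)}(x)\rho_1^{(0)}(y)$ for the Hermitian Palm kernel) with plain Markov on the cubic range, which already suffices and is slightly more elementary.
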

	
	\begin{proof}
		Fix $k \in [n-1]$. Denoting $s_0 = k/(en) \in (0,1)$, by \eqref{eq:palm} we obtain
		\begin{equation}\label{eq:CUEint1}
			\E[W_{n,k}^{-1}] = \int_0^\infty \P(W_{n,k} \le s) \frac{1}{s^2} \dd s = \int_0^\infty \frac{\P_0(\mathcal{N}_s \ge k)}{s^2} \dd s \le \int_0^{s_0} \frac{\P_0(\mathcal{N}_s \ge k)}{s^2} \dd s  + \frac{en}{k}.
		\end{equation}
		The unconditioned CUE eigenphases form a determinantal point process (DPP) (see \cite[Section 2.3]{BA}). By \cite[Theorem 1.7]{ST2003}, we have that the reduced Palm measure of any DPP is itself a DPP. 
		Consequently, by \cite[Theorem 7]{Hough}, the number of eigenphases $\mathcal{N}_s$ in $(0,2\pi s]$ under $\P_0$ is distributed as a sum of independent Bernoulli random variables. Therefore, by the multiplicative Chernoff bound (the bound being trivial for $k\le \mu_s$), we obtain
		\begin{equation}\label{eq:Chernoff}
			\P_0(\mathcal{N}_s \ge k) \le (e \mu(s)/k)^k, 
		\end{equation}
		where $\mu(s) = \E_0[\mathcal{N}_s]$. Writing $s_1 = \min\{k/(en), 1/n\} \in (0,1)$, applying the first and the second bounds in \eqref{eq:meso-micro} from Lemma \ref{lem:palm_mean_bounds} on the intervals $(s_1, s_0]$ and $(0,s_1]$, respectively, \eqref{eq:Chernoff} yields
		\begin{align*}
			\int_0^{s_0} \frac{\P_0(\mathcal{N}_s \ge k)}{s^2} \dd s &\le \int_0^{s_1} \left( \frac{e C' n^3 s^3}{k} \right)^k \frac{1}{s^2} \dd s + \mathds{1}(k \ge 2)\int_{s_1}^{s_0} \left( \frac{e n s}{k} \right)^k \frac{1}{s^2} \dd s \\
			&\le \left( \frac{e C' n^3}{k} \right)^k \frac{n^{-(3k-1)}}{3k-1} + \mathds{1}(k \ge 2) \left( \frac{e n}{k} \right)^k \frac{s_0^{k-1}}{k-1} \le C''\frac{n}{k}
		\end{align*}
		for some universal constant $C''\in (0,\infty)$. The result now follows from \eqref{eq:CUEint1}.
	\end{proof}

\begin{proof}[Proof of upper bounds in Theorem \ref{thm:cue_geom}] We start by bounding the Kolmogorov distance. Since the weights are ordered and $X_k$'s are conditionally independent, the sum $S_n$ satisfies assumptions \ref{a2} and \ref{a1} of Theorem \ref{thm:randrates_cond}. Although $S_n$ contains $n-1$ summands, we slightly abuse notation by writing $n$ in place of $n-1$ throughout the subsequent proof. We estimate each term of the aggregate error $\eps_n(\mathbf{W})$ defined in \eqref{eq:eps_n_random}. We can bound
	\begin{equation}\label{eq:CUEbd1}
		\delta_n = \max_{k \in [n-1]} (W_{n,k} - W_{n,k-1}) \le 2\Delta_n + \frac{1}{n}, \quad \text{and} \quad |W_{n,n-1} - 1| \le \Delta_n + \frac{1}{n},
	\end{equation}
	where $\Delta_n = \max_{k \in [n-1]} |W_{n,k} - k/n|$. Second, noting $\lambda_j(\mathbf{W}) = \frac{\theta}{n W_{n,j}}$, we have
	\begin{equation}\label{eq:CUEbd2}
		D_n^*  = \max_{k \in [n-1]} \bigg|\sum_{j=1}^k (\lambda_j(\mathbf{W}) W_{n,j} - \theta(W_{n,j} - W_{n,j-1}))\bigg|= \theta \max_{k \in [n-1]} \left| \frac{k}{n} - W_{n,k} \right| = \theta \Delta_n.
	\end{equation}
	Finally, we estimate the third component in $\eps_n(\mathbf{W})$ in \eqref{eq:eps_n_random}. Conditionally on $\mathbf{W}$, $X_k$ has success probability $p_k(\mathbf{W}) = (1 + \lambda_k(\mathbf{W}))^{-1}$, so that $x_{\min}^{(k)}(\mathbf{W}) = 0$, and $\P(X_k = 0 \mid \mathbf{W}) = p_k(\mathbf{W})$. Moreover, since a geometric random variable with success probability $p$ is equidistributed as $NB(1,p)$, by Lemma \ref{lem:nb_size_bias}, we have
	$$
	d_{TV}(X_k^*, X_k+1 \mid \mathbf{W}) \le 1 - p_k(\mathbf{W}) = \frac{\lambda_k(\mathbf{W})}{1 + \lambda_k(\mathbf{W})}.
	$$
	Substituting these, we obtain
	\begin{equation}\label{eq:CUEbd3}
		\sum_{k=1}^{n-1} \lambda_k(\mathbf{W}) W_{n,k} \, \frac{d_{TV}(X_k^*, X_k+1 \mid \mathbf{W})}{\P(X_k = x_{\min}^{(k)}(\mathbf{W}) \mid \mathbf{W})}  \le \sum_{k=1}^{n-1} \left( \frac{\theta}{n W_{n,k}} \right)^2 W_{n,k} = \frac{\theta^2}{n^2} \sum_{k=1}^{n-1} \frac{1}{W_{n,k}}.
	\end{equation}
	Combining the bounds in \eqref{eq:CUEbd1}, \eqref{eq:CUEbd2} and \eqref{eq:CUEbd3}, and taking expectation, we obtain
	$$
	\E [\eps_n(\mathbf{W})] \le \frac{2}{n} + 5 \E [\Delta_n] + \frac{2 \theta}{n^2} \sum_{k=1}^{n-1} \E\left[\frac{1}{W_{n,k}}\right].
	$$
	By Lemma \ref{lem:cue_rigidity}, we have $ \E [\Delta_n] = \mathcal{O}(\log n/n)$. On the other hand, by Lemma \ref{lem:cue_inverse}, we can bound
	$$
	\sum_{k=1}^{n-1} \E\left[\frac{1}{W_{n,k}}\right] \le  Cn \sum_{k=1}^{n-1} \frac{1}{k} = \mathcal{O}(n \log n).
	$$ 
	This yields $\E[\eps_n(\mathbf{W})] = \mathcal{O}(\frac{\log n}{n})$. An application of Theorem \ref{thm:randrates_cond} now establishes the asserted upper bound on $d_K(S_n,D_\theta)$.

	Finally, we establish the upper bound for the $d_{1,1}$ distance. Since the weights are ordered, we apply Theorem \ref{thm:d11_bound} using the canonical grid choice $t_k = W_{n,k}$ for $k \in [n-1]$, with $t_0 = 0$. This ensures that $|W_{n,k} - t_k| = 0$, and $\delta_n^{(t)} = \delta_n$ and $D_n^{(t)*} = D_n^*$.
	
	Also, because $0 \le \theta_1 < \dots < \theta_n < 2\pi$, we have $t_{n-1} = W_{n,n-1} = \frac{\theta_n - \theta_1}{2\pi} < 1$. Thus, $|t_{n-1} - 1| = 1 - W_{n,n-1}$. Bounding the aggregate error $\eps_n^{1,1}(\mathbf{W})$ from \eqref{eq:eps_n_d11} therefore yields
	\begin{equation}\label{eq:CUE_d11_aggregate}
		\eps_n^{1,1}(\mathbf{W}) \le \frac{\theta}{4} \delta_n + \frac{3}{2} D_n^* + \theta |W_{n,n-1} - 1| + \frac{1}{2} \sum_{k=1}^{n-1} \lambda_k(\mathbf{W}) W_{n,k}^2 \, d_1(X_k^*, X_k+1 \mid \mathbf{W}).
	\end{equation}
	
	To bound the size-bias coupling term, recall that conditionally given $\mathbf{W}$, $X_k$ is geometric. By Lemma \ref{lem:nb_size_bias}, we have $X_k^* \stackrel{d}{=} X_k + Y_k$, where $Y_k \in \N$ is conditionally independent of $X_k$, and $\E[Y_k \mid \mathbf{W}] = 1 + \lambda_k(\mathbf{W})$. Thus, we can bound
	\begin{equation}\label{eq:CUE_d11_sizebias}
		d_1(X_k^*, X_k+1 \mid \mathbf{W}) \le \E[|Y_k - 1| \mid \mathbf{W}] = \E[Y_k \mid \mathbf{W}] - 1 = \lambda_k(\mathbf{W}).
	\end{equation}
	Substituting \eqref{eq:CUE_d11_sizebias} into the sum in \eqref{eq:CUE_d11_aggregate}, and recalling $\lambda_k(\mathbf{W}) = \frac{\theta}{n W_{n,k}}$, we have
	\begin{equation*}\label{eq:CUE_d11_sum_bound}
		\frac{1}{2} \sum_{k=1}^{n-1} \lambda_k(\mathbf{W}) W_{n,k}^2 \, d_1(X_k^*, X_k+1 \mid \mathbf{W}) \le \frac{1}{2} \sum_{k=1}^{n-1} \lambda_k(\mathbf{W})^2 W_{n,k}^2 = \frac{1}{2} \sum_{k=1}^{n-1} \left( \frac{\theta}{n} \right)^2 \le \frac{\theta^2}{2n}.
	\end{equation*}
	By \eqref{eq:CUE_d11_aggregate}, and the bounds in \eqref{eq:CUEbd1} and \eqref{eq:CUEbd2}, we thus obtain
	\begin{align*}
		\eps_n^{1,1}(\mathbf{W}) &\le \frac{\theta}{4} \left( 2\Delta_n + \frac{1}{n} \right) + \frac{3\theta}{2} \Delta_n + \theta \left( \Delta_n + \frac{1}{n} \right) + \frac{\theta^2}{2n} = 3\theta \Delta_n + \left( \frac{5\theta}{4} + \frac{\theta^2}{2} \right) \frac{1}{n}.
	\end{align*}
	Theorem \ref{thm:d11_bound} now yields the upper bound noting by Lemma \ref{lem:cue_rigidity} that $ \E [\Delta_n] = \mathcal{O}(\log n/n)$, concluding the proof.
\end{proof}

Note that unlike the Kolmogorov bound, the smooth $d_{1,1}$ distance bound doesn't require the finite inverse moments in Lemma \ref{lem:cue_inverse}.
Finally, we prove the lower bounds in Theorem \ref{thm:cue_geom}. For this we need the following lemma, whose proof can be found in Appendix \ref{app:cuelb}.

\begin{lemma}\label{lem:laplace_Sn}
	Let $S_n = \sum_{k=1}^{n-1} W_{n,k} X_k$ be as in Theorem \ref{thm:cue_geom}. Then for any $\theta > 0$, there exists a constant $C \in (0,\infty)$ depending only on $\theta$ such that
	$$
	\E[e^{-S_n}] \ge \E[e^{-D_\theta}] \left( 1 + \frac{C}{n}\right).
	$$
\end{lemma}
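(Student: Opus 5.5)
The plan is to compute $\E[e^{-S_n}]$ by first conditioning on $\mathbf{W}$ and then comparing with the Laplace transform of $D_\theta$. Conditionally on $\mathbf{W}$, the $X_k$ are independent geometric variables with mean $\lambda_k = \theta/(nW_{n,k})$, so
$$
\E[e^{-S_n}\mid \mathbf{W}] = \prod_{k=1}^{n-1} \frac{1}{1+\lambda_k(1-e^{-W_{n,k}})} = \exp\Big(-\sum_{k=1}^{n-1}\log\Big(1+\tfrac{\theta}{n}\,\phi(W_{n,k})\Big)\Big),
$$
where $\phi(w) := (1-e^{-w})/w \in (0,1]$. The Laplace transform of $D_\theta$ is
$$
\E[e^{-D_\theta}] = \exp\Big(-\theta\int_0^1 \phi(u)\,\dd u\Big).
$$
Since $\log(1+x)\le x$, we get
$$
\E[e^{-S_n}\mid\mathbf{W}] \ge \exp\Big(-\frac{\theta}{n}\sum_{k=1}^{n-1}\phi(W_{n,k})\Big).
$$
By Jensen's inequality (the map $\exp(-\cdot)$ is convex), it then suffices to show
$$
\frac{1}{n}\sum_{k=1}^{n-1}\E[\phi(W_{n,k})] \le \int_0^1\phi(u)\,\dd u - \frac{c}{n}
$$
for some $c>0$ depending only on $\theta$. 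Exponentiating gives a factor $e^{\theta c/n}\ge 1+\theta c/n$, which is the claimed bound.

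To control the sum, I would compare with the deterministic grid. The function $\phi$ is decreasing, with $\phi'$ bounded by $1/2$ in absolute value, and $\phi$ is convex. The key input is the translation invariance of Haar measure. The unordered eigenphases relative to $\theta_1$ can be replaced, via Palm theory, by the reduced Palm process: the points $2\pi W_{n,k}$, $k\in[n-1]$, form the reduced Palm process $\P_0$ on $(0,2\pi)$, whose intensity is $n-1$ times a density. By Campbell's formula,
$$
\sum_{k}\E[\phi(W_{n,k})] = \int_0^1 \phi(s)\,\rho_0(s)\,\dd s,
$$
where $\rho_0$ is the one-point density under $\P_0$. For the CUE, the Palm density is explicitly $\rho_0(s) = n\big(1 - (\sin(\pi n s)/(n\sin(\pi s)))^2\big)$, which comes from the sine-type kernel. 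Its total mass is $n-1$. Hence
$$
\frac1n\sum_k \E[\phi(W_{n,k})] = \int_0^1\phi(s)\,\dd s - \int_0^1 \phi(s)\,\frac{\sin^2(\pi n s)}{n^2\sin^2(\pi s)}\,\dd s.
$$

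The main step is to show that the subtracted integral is at least $c/n$. Since $\phi \ge \phi(1) = 1-e^{-1}$ on $[0,1]$, the integral is at least $(1-e^{-1})\int_0^1 \sin^2(\pi n s)/(n^2\sin^2(\pi s))\,\dd s$. The Fejér kernel identity gives
$$
\int_0^1 \frac{\sin^2(\pi n s)}{\sin^2(\pi s)}\,\dd s = n,
$$
so the subtracted integral is at least $(1-e^{-1})/n$, exactly as needed. Combining this with the Jensen step and the conditional bound yields $\E[e^{-S_n}]\ge \E[e^{-D_\theta}]e^{\theta(1-e^{-1})/n}$.

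The obstacle I expect is justifying the Palm/Campbell identity and the exact form of $\rho_0$. This requires the determinantal structure with kernel $K(x,y)=\sum_{j=0}^{n-1}e^{ij(x-y)}$ (cited earlier through \cite{BA,ST2003}) and the formula $\rho_0(y)=\big(K(0,0)K(y,y)-|K(0,y)|^2\big)/K(0,0)$. The rest is routine.
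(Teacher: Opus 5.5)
You have the conditional Laplace transform, the bound $\log(1+x)\le x$, the Jensen step and the Fej\'er-kernel evaluation right, and these are exactly the steps of the paper's proof. The step that fails is the one you flagged as the expected obstacle, but the problem is not one of justification. The configuration $\{2\pi W_{n,k}\}_{k\in[n-1]}$ of eigenphases seen from $\theta_1$ is \emph{not} distributed as the reduced Palm process $\P_0$. The paper's proof makes the same identification, via \eqref{eq:palm} and its formula for $\E[\frac1n\sum_k f(W_{n,k})]$, so following it does not help.

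Translation invariance would give the Palm law if the reference point were a \emph{typical} eigenphase. But $\theta_1$ is the first eigenphase after the fixed angle $0$, so it is selected with probability proportional to the gap preceding it (the inspection paradox). The Mecke formula makes this exact: for every functional $h$ of the configuration,
$$\E[h(\mathbf W)]=\E_0\big[n(1-W_{n,n-1})\,h(\mathbf W)\big],$$
where under $\P_0$ the $W_{n,1}<\dots<W_{n,n-1}$ are the normalised Palm points. Already for $n=2$, $W_{2,1}$ has density $2(1-w)(1-\cos 2\pi w)$ on $(0,1)$, not the Palm density $1-\cos 2\pi w$. For instance, $\E W_{2,1}=\frac13+\frac1{\pi^2}\neq\frac12$.

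What Campbell's formula actually gives, using $\E_0[1-W_{n,n-1}]=\frac1n$, is
\[
\frac1n\sum_{k=1}^{n-1}\E[\phi(W_{n,k})]=\int_0^1\phi(s)\,\dd s-\int_0^1\phi(s)\,\frac{\sin^2(\pi n s)}{n^2\sin^2(\pi s)}\,\dd s+\operatorname{Cov}_0\Big(1-W_{n,n-1},\,\sum_{k=1}^{n-1}\phi(W_{n,k})\Big).
\]
The extra covariance is of order $1/n$, the same order as the gain $(1-e^{-1})/n$ you extract. It also has the unfavourable sign: a long final gap squeezes all the $W_{n,k}$ into $(0,W_{n,n-1}]$, where the decreasing $\phi$ is larger. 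You can see this exactly for the linear part of $\phi$. From $\E[\theta_1]=\pi n\,\E_0[(1-W_{n,n-1})^2]$ one gets $\E\big[\sum_k W_{n,k}\big]=\frac{n-1}{2}-\frac{n^2}{2}\operatorname{Var}_0(1-W_{n,n-1})$, whereas your Palm formula predicts $\frac{n-1}{2}$.

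Hence the inequality $\frac1n\sum_k\E[\phi(W_{n,k})]\le\int_0^1\phi-\frac{1-e^{-1}}{n}$, and with it the lemma, does not follow from your argument. To close the gap you need a genuine estimate showing that this covariance is at most $(1-e^{-1}-c)/n$ for some absolute $c>0$. Heuristically it is about $\frac{e^{-1}}{2n}$ times the variance of a normalised nearest-neighbour spacing, so the lemma is plausibly true. Proving it requires control of $\operatorname{Var}_0(n(1-W_{n,n-1}))$ and of the fluctuations of linear statistics under $\P_0$. Nothing in your proposal, or in the paper's proof, supplies this.
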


\begin{proof}[Proof of lower bounds in Theorem \ref{thm:cue_geom}]
	We first prove the lower bound for the Kolmogorov distance. Let $\theta \ge 1$. Denote $F_n(x) = \P(S_n \le x)$, $F(x) = \P(D_\theta \le x)$, and $L_D = \E[e^{-D_\theta}]$. Integrating by parts, we can bound
	$$
	\E[e^{-S_n}] - \E[e^{-D_\theta}] = \int_0^\infty \big( F_n(x) - F(x) \big) e^{-x} \dd x \le \sup_{x \ge 0} |F_n(x) - F(x)| \int_0^\infty e^{-x} \dd x = d_K(S_n, D_\theta).
	$$
	Therefore, by Lemma \ref{lem:laplace_Sn}, there exists a constant $C \in (0,\infty)$ depending only on $\theta$ such that
	$$
	d_K(S_n, D_\theta) \ge L_D \left( 1 + \frac{C}{n} \right) - L_D =  \frac{L_D C}{n},
	$$
	yielding the desired lower bound for $\theta \ge 1$.
	
	Next, fix $\theta \in (0,1)$. Note that $\P(D_\theta \le 0) = 0$ so that $d_K(S_n,D_\theta) \ge \P(S_n \le 0)$. Since the weights $W_{n,k}$ are almost surely positive for $k \in [n-1]$, we have that $S_n \le 0$ if and only if $X_k = 0$ for all $k \in [n-1]$. The conditional independence of $(X_k)_{k \in [n-1]}$ thus implies
	$$
	\P(S_n \le 0 \mid \mathbf{W}) = \prod_{k=1}^{n-1} \P(X_k = 0 \mid \mathbf{W}) = \prod_{k=1}^{n-1} \left( 1 + \frac{\theta}{n W_{n,k}} \right)^{-1} = e^{-Z},
	$$
	with $Z = \sum_{k=1}^{n-1} \log\left( 1 + \frac{\theta}{n W_{n,k}} \right)$. By Jensen's inequality, we have
	$$
	\P(S_n \le 0) = \E[e^{-Z}] \ge e^{-\E[Z]}.
	$$
	To upper bound $\E[Z]$, we use the simple identity 
    \begin{equation}\label{eq:logid}
            \log(1 + a/x) = \int_x^\infty \frac{a}{s(s+a)} \, \dd s \quad  a,x > 0.
    \end{equation}
    Applying this to each summand in $Z$ with $a = \theta/n$ and $x=W_{n,k}$, by Tonelli's theorem, we obtain
	\begin{align*}
		\E[Z] &= \E \left[ \sum_{k=1}^{n-1} \int_{W_{n,k}}^\infty \frac{\theta/n}{s(s + \theta/n)} \,\dd s \right] = \int_0^\infty \left(\sum_{k=1}^{n-1} \P(W_{n,k} \le s) \right) \frac{\theta/n}{s(s + \theta/n)} \,\dd s.
	\end{align*}
	Since there are exactly $n-1$ eigenphases in $(0,2\pi]$ under $\P_0$, by \eqref{eq:palm} we have
	$$
    \sum_{k=1}^{n-1} \P(W_{n,k} \le s) = \sum_{k=1}^{n-1} \P_0(\mathcal{N}_s \ge k)  = \sum_{k=1}^{\infty} \P_0(\mathcal{N}_s \ge k) = \E_0[\mathcal{N}_s] = \mu(s).
    $$
	For $s \in (0,1)$, applying the bound $\mu(s) \le ns$ established in \eqref{eq:meso-micro} while or $s \ge 1$, using the trivial bound $\mu(s) \le n$, we obtain
    \begin{align*}
        \E[Z] & \le \int_0^1 (ns) \frac{\theta/n}{s(s + \theta/n)} \dd s + \int_1^\infty n \frac{\theta/n}{s(s + \theta/n)} \dd s\\
        & \qquad = \theta \int_0^1 \frac{1}{s + \theta/n} \dd s + n \log\left( 1 + \frac{\theta}{n} \right) \le \theta \log \frac{1 + \theta/n}{\theta/n} + \theta  = \theta \log\left( \frac{n}{\theta} + 1\right) + \theta,
    \end{align*}
	where the second step is due to \eqref{eq:logid} and the penultimate step used the simple inequality $\log (1+x) \le x$ for $x \ge 0$. An application of Jensen's inequality now yields
	$$
	d_K(S_n, D_\theta) \ge \P(S_n \le 0) \ge e^{-\E [Z]}\ge \exp\Big( - \theta \log\left( \frac{n}{\theta} + 1 \right) - \theta \Big) = e^{-\theta} \left( \frac{n}{\theta} + 1 \right)^{-\theta} = \Omega(n^{-\theta}),
	$$
	concluding the proof for $\theta \in (0,1)$, and therefore proving the asserted lower bound on $d_K(S_n, D_\theta)$.

	To establish the lower bound for the $d_{1,1}$ distance, we again consider the identity function $h(x) = x$. Since $h \in \mathcal{H}_{1,1}$ and $\lambda_k(\mathbf{W}) = \frac{\theta}{n W_{n,k}}$, we can evaluate
	\begin{equation*}
		\E[S_n] = \E \left[ \sum_{k=1}^{n-1} W_{n,k} \E[X_k \mid \mathbf{W}] \right] = \E \left[ \sum_{k=1}^{n-1} W_{n,k} \frac{\theta}{n W_{n,k}} \right] = \sum_{k=1}^{n-1} \frac{\theta}{n} = \theta - \frac{\theta}{n}.
	\end{equation*}
	Noting $\E[D_\theta] = \theta$, this immediately yields
	\begin{equation*}
		d_{1,1}(S_n, D_\theta) = \sup_{f \in \mathcal{H}_{1,1}} \big|\E[f(S_n)] - \E[f(D_\theta)]\big| \ge \big|\E[h(S_n)] - \E[h(D_\theta)]\big| = \left| \theta - \frac{\theta}{n} - \theta \right| = \frac{\theta}{n}
	\end{equation*}
	proving the desired $d_{1,1}$ lower bound.
\end{proof}

\subsection{Proof of Theorem \ref{thm:poisson_app}}\label{sec:proofpoi}
Before we proceed to the proof of Theorem \ref{thm:poisson_app}, we first note the following simple fact: For $y_1, \dots, y_n \in \R$, let $y_{(1)} \le \dots \le y_{(n)}$ denote their order statistics. Then,
	\begin{equation}\label{eq:sorting_nonexpansive}
			\max_{k \in [n]} |y_{(k)} - k| \le \max_{k \in [n]} |y_k - k|.
	\end{equation}
	Indeed, denoting $\Delta = \max_{k \in [n]} |y_k - k|$, 
	note that for any fixed $k \in [n]$, and $i \le k$, we have $y_i \le i + \Delta \le k + \Delta$, 
	yielding $y_{(k)} \le k + \Delta$. By symmetric reasoning, note that $y_i \ge i - \Delta \ge k - \Delta$ for any $i \ge k$, 
	so that $y_{(k)} \ge k - \Delta$, proving \eqref{eq:sorting_nonexpansive}.

\begin{proof}[Proof of Theorem \ref{thm:poisson_app}]
	The sequence $(X_k)_{k \in \N}$ consists of independent Poisson random variables with means $\lambda_k(\mathbf{W}) \equiv \lambda_k= \E[X_k] = \theta/k$, which trivially satisfies assumption \ref{a1}. We can therefore apply Theorem \ref{thm:randrates_unordered}. Let $W_{n,(1)} \le \dots \le W_{n,(n)}$ be the ordered weights, and let $\sigma$ be the permutation such that $W_{n,(k)} = W_{n,\sigma(k)}, k \in [n]$. Since $X_k$ is Poisson distributed, we have $X_k^* \stackrel{d}{=} X_k + 1$, so that $d_{TV}(X_k^*, X_k + 1 \mid \mathbf{W}) = d_{TV}(X_k^*, X_k + 1)= 0$ for $k \in [n]$, and hence the third term in the aggregate error \eqref{eq:eps_n_unordered} vanishes. Theorem \ref{thm:randrates_unordered} therefore yields
	\begin{equation}\label{eq:Pdcond}
		d_K(S_n, D_\theta) \le \begin{cases} 
			C_1 \E[\eps_n^{\text{os}}], & \text{if } \theta \ge 1, \\ 
			2\theta^2 \E[\eps_n^{\text{os}}] + C_2 \left(\E[\eps_n^{\text{os}}]\right)^\theta, & \text{if } \theta \in (0,1),
		\end{cases}
	\end{equation}
	with 
	\begin{equation}\label{eq:Peps}
		\eps_n^{\text{os}} := \delta_n^{\text{os}} + \frac{2}{\theta} \max_{k \in [n]} |D_{n,k}^{\text{os}}| + |W_{\max}^{(n)} - 1|,
	\end{equation}
	where
	$$
	\delta_n^{\text{os}} = \max_{k \in [n]} ({W}_{n,(k)} - {W}_{n,(k-1)}), \quad \text{and} \quad D_{n,k}^{\text{os}} := \sum_{j=1}^k (\lambda_{\sigma(j)} W_{n,(j)} - \theta (W_{n,(j)} - W_{n,(j-1)})).
	$$
	
	Let $(Y_{(k)})_{k \in [n]}$ denote the order statistics of $(Y_k)_{k \in [n]}$, so that $W_{n,(k)} = Y_{(k)}/ n$. By \eqref{eq:sorting_nonexpansive}, we have $\max_{k \in [n]} |Y_{(k)} - k| \le \Delta_n$, where we define $\Delta_n := \max_{k \in [n]} |Y_k - k|$.
	Bounding $Y_{(k)} - Y_{(k-1)} \le |Y_{(k)} - k| + |Y_{(k-1)} - (k-1)| + 1 \le 2\Delta_n + 1$, we thus obtain
	\begin{equation}\label{eq:Pdel}
		\delta_n^{\text{os}} \le \frac{1}{n}(2\Delta_n + 1).
	\end{equation}
	On the other hand, since $W_{\max}^{(n)} = \frac{1}{n} Y_{(n)}$, one obtains directly from \eqref{eq:sorting_nonexpansive} that
	\begin{equation}\label{eq:PW}
		|W_{\max}^{(n)}- 1| = \frac{1}{n} |Y_{(n)} - n| \le \frac{1}{n} \Delta_n.
	\end{equation}
	Finally, we bound the discrepancy term $\max_{k \in [n]} |D_{n,k}^{\text{os}}|$. Observe that $\sum_{j=1}^k \theta(W_{n,(j)} - W_{n,(j-1)}) = \theta W_{n,(k)} = \frac{\theta}{n} Y_{(k)}$. Noting that $\lambda_{\sigma(j)} = \frac{\theta}{\sigma(j)}$, this yields
	$$
	\frac{2}{\theta} \max_{k \in [n]} |D_{n,k}^{\text{os}}|  = \frac{2}{n} \max_{k \in [n]} \Bigg| \sum_{j=1}^k \frac{Y_{\sigma(j)}}{\sigma(j)} - Y_{(k)} \Bigg| \le \frac{2}{n} \max_{k \in [n]} \Bigg( |k - Y_{(k)}| + \sum_{j=1}^k \frac{|Y_{\sigma(j)} - \sigma(j)|}{\sigma(j)} \Bigg),
	$$
	where the final step follows upon writing $Y_{\sigma(j)} = \sigma(j) + (Y_{\sigma(j)} - \sigma(j))$ and applying the triangle inequality. Noting that $\{\sigma(1), \dots, \sigma(k)\}$ are $k$ distinct positive integers in $[n]$, we have $\sum_{j=1}^k \sigma(j)^{-1} \le H_n$, where for $j \in [n]$, we write $H_j := \sum_{i=1}^j (1/j)$. Thus, \eqref{eq:sorting_nonexpansive} yields
	\begin{equation}\label{eq:PD}
		\frac{2}{\theta} \max_{k \in [n]} |D_{n,k}^{\text{os}}|  \le \frac{2}{n} \left( \Delta_n + \Delta_n H_n \right).
	\end{equation}
	Combining the bounds \eqref{eq:Pdel}, \eqref{eq:PW} and \eqref{eq:PD} via \eqref{eq:Peps}, the error $\eps_n^{\text{os}}$ is bounded by
	\begin{equation}\label{eq:poisson_cond_bound}
		\eps_n^{\text{os}} \le \frac{1}{n} \Big( (2\Delta_n + 1) + \Delta_n + 2\Delta_n(1 + H_n) \Big) = \frac{1}{n} \Big( 1 + \Delta_n(5 + 2H_n) \Big).
	\end{equation}
	
	Finally, we bound $\E[\Delta_n]$, thereby bounding $\E[\eps_n^{\text{os}}]$. If we only assume \eqref{eq:pmom} for $p \ge 1$, since the $L^\infty$-norm is always upper bounded by the $L^p$-norm, Jensen's inequality yields
	$$
	\E[\Delta_n] = \E\left[ \max_{k \in [n]} |Y_k - k| \right] \le \E\Bigg[ \Big( \sum_{k=1}^n |Y_k - k|^p \Big)^{1/p} \Bigg] \le \Bigg( \sum_{k=1}^n \E[|Y_k - k|^p] \Bigg)^{1/p} \le \Bigg( \sum_{k=1}^n M_{k,p} \Bigg)^{1/p},
	$$
	yielding the first assertion by \eqref{eq:Pdcond} and \eqref{eq:poisson_cond_bound}. 
	
	Next, assume that the $(Y_k)_{k \in [n]}$ are sub-Gaussian satisfying \eqref{eq:subgaussian_tail}. For $t \ge 0$, the union bound yields
	$$
	\P(\Delta_n \ge t) = \P\left( \max_{k \in [n]} |Y_k - k| \ge t \right) \le \sum_{k=1}^n \P(|Y_k - k| \ge t) \le 2n \exp\left( - \frac{t^2}{C n} \right).
	$$
	Choosing $t_0 = \sqrt{Cn \log n}$, we thus obtain
	\begin{align*}
		\E[\Delta_n] &= \int_0^{t_0} \P(\Delta_n \ge t) \dd t + \int_{t_0}^\infty \P(\Delta_n \ge t) \dd t \\
		&\le t_0 + \int_{t_0}^\infty 2n \exp\left( - \frac{t^2}{Cn} \right) \dd t \le t_0 + 2n \int_{t_0}^\infty \frac{t}{t_0} \exp\left( - \frac{t^2}{Cn} \right) \dd t = t_0 + \frac{n^2 C}{t_0} \exp\left( - \frac{t_0^2}{Cn} \right).
	\end{align*}
	Substituting $t_0 = \sqrt{Cn \log n}$ now yields
	$$
	\E[\Delta_n] \le \sqrt{Cn \log n} + \frac{n^2 C}{\sqrt{Cn \log n}} \cdot \frac{1}{n} = \mathcal{O}\big( \sqrt{n \log n} \big).
	$$
	Because $H_n = \mathcal{O}(\log n)$, we substitute the bound for $\E[\Delta_n]$ into \eqref{eq:poisson_cond_bound} to obtain
	$$
	\E[\eps_n^{\text{os}}]= \mathcal{O}\left( \frac{(\log n)^{3/2}}{\sqrt{n}} \right),
	$$
	yielding the second assertion via \eqref{eq:Pdcond}.
\end{proof}

\section*{Acknowledgements}
\addcontentsline{toc}{section}{Acknowledgements}

 The research was supported in part by the German Research Foundation (DFG) Project 531540467.

	\printbibliography

	\appendix

    %
%
\section{Proof of Lemma \ref{lem:snlb}}\label{app:lemlbpf}
\subsection{The case of the measure $\mathbb{P}_n$: Proof of \eqref{eq:snlbint}}
	We fix $\theta>1$. The event $S_n \le 1$ corresponds to the condition that the randomly sampled integer is smaller than $p_n$. Thus,
	\begin{equation}\label{eq:Sle1}
		\mathbb{P}(S_n \le 1) = \frac{1}{Z_n} \sum_{m=1}^{p_n} \frac{\tau_\theta(m)}{m} =: \frac{S(p_n)}{Z_n}.
	\end{equation}
	Next, we establish the asymptotic orders of the sum $S(p_n)$ and the normalizing constant $Z_n$. Below, we set $x=p_n$ for simplicity. 
	
	\medskip
	
	\noindent\underline{\textit{Asymptotics of the sum in \eqref{eq:Sle1}:}} 
	Let $T(x) = \sum_{1 \le m\le x} \tau_\theta(m)$ and $S(x)=\sum_{1 \le m\le x}\frac{\tau_\theta(m)}{m}$. We will first estimate $T(x)$ using the Selberg-Delange method and then use the Abel summation formula to transfer to $S(x)$. By \cite[Ch.\ II.5, Eqs.\ (5.2) and (5.3)]{tet}, the Dirichlet series (see \cite[Ch.\ I.2, Definition 2.3]{tet}) of $\tau_\theta$ is given by
	$$
	F(s) = \sum_{m=1}^\infty \tau_\theta(m) m^{-s} = \zeta(s)^\theta, \quad s \in \mathbb{C} \text{ with } \Re (s)>1,
	$$
	where $\Re (s)$ denotes the real part of $s$. To extract the asymptotic behavior of its coefficients, we apply the Selberg-Delange method (see \cite[Ch.\ II.5, Theorem 5.4]{tet}). We can trivially factor the Dirichlet series as $F(s) = \zeta(s)^\theta G(s; \theta)$, by taking (see \cite[Ch.\ II.5, Eq.\ (5.32)]{tet})
	$$
	G(s; \theta) \equiv 1 =: \sum_{n=1}^\infty g_\theta(n)/n^s,
	$$
	where we trivially have $g_\theta(1) = 1$ and $g_\theta(n) = 0$ for all $n \ge 2$. Consequently, the absolute convergence condition in \cite[Ch.\ II.5, Eq.\ (5.33)]{tet} is trivially satisfied for $z=\theta$ and $N=1$, since
	$$
	\sum_{n = 1}^\infty \frac{|g_\theta(n)|}{n} (\log 3n)^{N+1+\max(1-\theta,\, 0)} = (\log 3)^{2} < \infty.
	$$
	Thus, since $F(s)$ is convergent for $\Re (s)>1$, \cite[Ch.\ II.5, Theorem 5.4]{tet} guarantees an asymptotic expansion of $T$ determined by the Taylor coefficients $\gamma_j(\theta) = Z^{(j)}(1, \theta), j \ge 0$ of the function $Z(s;\theta) = \big((s-1)\zeta(s)\big)^\theta/s$ around $s=1$ \cite[Ch.\ II.5, Eqs.\ (5.5) and (5.6)]{tet}. Specifically, applying \cite[Ch.\ II.5, Theorem 5.4 and Eq.\ (5.16)]{tet} with $N=1$ and $A=\theta$, we obtain
	\begin{equation}\label{eq:Tintermed}
		T(x) = x (\log x)^{\theta-1} \left(\lambda_0(\theta) + \frac{\lambda_1(\theta)}{\log x} \right) + \mathcal{O}\big(x (\log x)^{\theta - 3}\big).
	\end{equation}
	The expansion coefficients are given by (see \cite[Ch.\ II.5, Eq.\ (5.13)]{tet} with $G\equiv 1$)
	\begin{equation}\label{eq:gk}
		\lambda_k(\theta) = \frac{1}{\Gamma(\theta - k)} \frac{\gamma_k(\theta)}{k!}= \frac{1}{\Gamma(\theta - k)} \frac{Z^{(k)}(1, \theta)}{k!}.
	\end{equation}
	To evaluate $\gamma_j(\theta) = Z^{(j)}(1, \theta)$ for $j = 0,1$, we note two classical limit evaluations of the Riemann zeta function, namely, $\lim_{s \to 1} (s-1)\zeta(s) = 1$ and $\frac{d}{ds}\big((s-1)\zeta(s)\big)\big|_{s=1} = \gamma$. Thus, 
	\begin{equation}\label{eq:Z}
		Z(1; \theta) = \lim_{s \to 1}  ((s-1)\zeta(s))^\theta = 1 \quad \text{and} \quad Z'(1; \theta) = -1 + \theta\gamma.
	\end{equation}
	From \eqref{eq:gk}, this yields $\lambda_0(\theta) = \frac{1}{\Gamma(\theta)}$ while
	$
	\lambda_1(\theta) = \frac{\theta\gamma - 1}{\Gamma(\theta - 1)} = \frac{(\theta-1)(\theta\gamma - 1)}{\Gamma(\theta)}
	$. 
	Thus we obtain from \eqref{eq:Tintermed} that
	\begin{equation}\label{eq:T}
		T(x) = \frac{x\,(\log x)^{\theta-1}}{\Gamma(\theta)} \Bigl(1 + \frac{\alpha_1}{\log x} + R(\log x) \Bigr)\Bigr),
	\end{equation}
	where $\alpha_1 = (\theta-1)(\theta\gamma - 1)$ and the remainder term satisfies $|R(u)| \le C u^{-2}, u >0$, for some constant $C \in (0,\infty)$ depending only on $\theta$. 
	
	Next, noting that $T(t) = \tau_\theta(1)=1$ for $t \in [1,2)$, by Abel's summation formula, we obtain
	\begin{equation}\label{eq:Sintermed}
		S(x) = \frac{T(x)}{x} + \int_1^x \frac{T(t)}{t^2} \dd t = \frac{T(x)}{x} + \int_2^x \frac{T(t)}{t^2} \dd t + \frac{1}{2}.
	\end{equation}
	We substituting the expansion of $T(x)$ from \eqref{eq:T} into this identity. The first term thus yields
	$$
	\frac{T(x)}{x} = \frac{(\log x)^{\theta-1}}{\Gamma(\theta)} \Bigl(1 + \mathcal{O}\Bigl(\frac{1}{\log x}\Bigr)\Bigr).
	$$
	Focusing next on the integral term in \eqref{eq:Sintermed}, applying the change of variables $u = \log t$, we obtain
	\begin{align}\label{eq:intint}
	\int_2^x \frac{T(t)}{t^2} \dd t &= \int_{\log 2}^{\log x} \frac{u^{\theta-1}}{\Gamma(\theta)} \Bigl(1 + \alpha_1 u^{-1} + R(u)\Bigr) \dd u \nonumber\\
	&= \frac{(\log x)^\theta - (\log 2)^\theta}{\Gamma(\theta+1)} + \frac{1}{\Gamma(\theta)} \int_{\log 2}^{\log x} \Bigl(\alpha_1 u^{\theta-2} + u^{\theta-1} R(u)\Bigr) \dd u.
	\end{align}
	For the subsequent integral in \eqref{eq:intint}, we can evaluate
	\begin{align*}
		\frac{\alpha_1}{\Gamma(\theta)} \int_{\log 2}^{\log x} u^{\theta-2} \dd u &= \frac{(\theta-1)(\theta\gamma - 1)}{\Gamma(\theta)} \frac{(\log x)^{\theta-1} - (\log 2)^{\theta-1}}{\theta-1} = \frac{\theta\gamma - 1}{\Gamma(\theta)}\big[(\log x)^{\theta-1} - (\log 2)^{\theta-1}\big].
	\end{align*}
	On the other hand, noting $|R(u)| \le C u^{-2}$ for $u>0$, we can bound
	$$
	\frac{1}{\Gamma(\theta)} \int_{\log 2}^{\log x} u^{\theta-1} |R(u)| \dd u \le \frac{C}{\Gamma(\theta)} \int_{\log 2}^{\log x} u^{\theta-3} \dd u = \mathcal{O}\big((\log x)^{\theta-2}\big) + \mathcal{O}(\log \log x) \mathds{1}_{\{\theta=2\}}.
	$$
	Combining with the bounds obtained above, from \eqref{eq:Sintermed} we obtain
	\begin{align}\label{eq:S}
		S(x) &= \frac{(\log x)^\theta}{\Gamma(\theta+1)} + \frac{\theta\gamma}{\Gamma(\theta)}(\log x)^{\theta-1} + \mathcal{O}\big((\log x)^{\theta-2}\big) + \mathcal{O}(\log \log x) \mathds{1}_{\{\theta=2\}} + K \nonumber\\
		& = \frac{(\log x)^\theta}{\Gamma(\theta+1)}\Bigl(1+\frac{c_\theta}{\log x}+\mathcal{O}\Bigl(\frac{\log \log x}{(\log x)^\eta}\Bigr)\Bigr),
	\end{align}
	where $K \in \mathbb{R}$ is a constant depending only on $\theta$, $c_\theta = \theta^2 \gamma$ and $\eta = \min\{2, \theta\}$. \medskip
	
	\noindent\underline{\textit{Asymptotics of the normalising constant in \eqref{eq:Sle1}:}} 
	Recall from \eqref{eq:Zn} that $Z_n = \prod_{p \le x}\Bigl(1-\frac1p\Bigr)^{-\theta}$, here and throughout this section, $\prod_p$ and $\sum_p$ denote products and sums over primes $p$. We apply the following asymptotic approximation (see e.g.\ \cite[Eqs.\ (2.32) with $\alpha=1$ and (2.9)]{RS62}): for some positive absolute constant $a \in (0,\infty)$ and any $b \in (0,a)$, it holds that
	\begin{align*}
		\prod_{p\le x}\left(1-\frac{1}{p}\right)&=\frac{e^{-\gamma}}{\log x}+\mathcal{O}\left(e^{-a\sqrt{\log x}}\right) \\
		&= \frac{e^{-\gamma}}{\log x} \left(1+\mathcal{O}\left(e^{\log \log x -a\sqrt{\log x}}\right) \right) = \frac{e^{-\gamma}}{\log x} \left(1+\mathcal{O}\left(e^{-b\sqrt{\log x}}\right) \right).
	\end{align*}
	Noting from the generalized binomial theorem that $\left(1+\mathcal{O}\left(e^{-b\sqrt{\log x}}\right) \right)^{-\theta} = 1+ \mathcal{O}\left(e^{-b\sqrt{\log x}}\right)$,
	the product $Z_n$ can thus be expanded as
	\begin{equation}\label{eq:Znasymp}
		Z_n = \prod_{p\le x}\Bigl(1-\frac1p\Bigr)^{-\theta}
		= (e^\gamma\log x)^\theta \Bigl(1 + \mathcal{O}\big(e^{-b \sqrt{\log x}}\big)\Bigr).
	\end{equation}

	\noindent\underline{\textit{Obtaining \eqref{eq:snlbint}:}} 
	Recalling that $x = p_n$, and combining \eqref{eq:Sle1}, \eqref{eq:S} and \eqref{eq:Znasymp} yields
	$$
	\begin{aligned}
		\mathbb P(S_n\le 1) &= \frac{1}{(e^\gamma\log p_n)^\theta}\Bigl(1 + \mathcal{O}\big(e^{-b\sqrt{\log p_n}}\big)\Bigr)
		\frac{(\log p_n)^\theta}{\Gamma(\theta+1)}\Bigl(1+\frac{c_\theta}{\log p_n}+\mathcal{O}\Bigl(\frac{\log \log p_n}{(\log p_n)^\eta}\Bigr)\Bigr)\\[4pt]
		&= \frac{e^{-\gamma\theta}}{\Gamma(\theta+1)}
		\Bigl(1+\frac{c_\theta}{\log p_n}+\mathcal{O}\Bigl(\frac{\log \log p_n}{(\log p_n)^\eta}\Bigr)\Bigr).
	\end{aligned}
	$$
	Since $\mathbb P(D_\theta\le 1) = \displaystyle e^{-\gamma\theta}/\Gamma(\theta+1)$ (see e.g.\ \cite[Eq.\ (17)]{BS25}), we obtain
	$$
	\mathbb P(S_n\le 1) = \mathbb P(D_\theta\le 1) + \frac{C_\theta}{\log p_n} + \mathcal{O}\Bigl(\frac{\log \log p_n}{(\log p_n)^\eta}\Bigr),
	$$
	where $C_\theta = \frac{\theta^2 \gamma e^{-\gamma\theta}}{\Gamma(\theta+1)}$, yielding the result.

\subsection{The case of the measure $\mathbb{P}_n^{\text{sq}}$: Proof of \eqref{eq:snsqlbint}}
	Fix $\theta>1$. The event $S_n^{\text{sq}} \le 1$ corresponds to the randomly sampled integer being smaller than $p_n$. Thus,
	\begin{equation}\label{eq:Wlesq}
		\mathbb{P}(S_n^{\text{sq}} \le 1) = \frac{1}{Z_n^{\text{sq}}} \sum_{m=1}^{p_n} \frac{\mu^2(m) \theta^{\omega(m)}}{m} =: \frac{S(p_n)}{Z_n^{\text{sq}}},
	\end{equation}
	where $\mu^2(m) = \mathds{1}\{m \text{ is square-free}\}, m \in \N$ is the Möbius function. We again find the asymptotic orders of the sum $S(p_n)$ and the normalizing constant $Z_n^{\text{sq}}$ separately, and set $x=p_n$ below. \medskip
	
	\noindent\underline{\textit{Asymptotics of the sum in \eqref{eq:Wlesq}:}} 
	The argument is similar to the proof for \eqref{eq:snlbint} above. As before, we let $T(x) = \sum_{1 \le m\le x} \mu^2(m)\theta^{\omega(m)}$ and $S(x)=\sum_{1 \le m\le x}\frac{\mu^2(m)\theta^{\omega(m)}}{m}$. The Dirichlet series for the multiplicative arithmetic function $\mu^2(m)\theta^{\omega(m)}$ is given by
	$$
	F(s) = \sum_{m=1}^\infty \mu^2(m)\theta^{\omega(m)} m^{-s} = \prod_p \left( \sum_{k=0}^\infty \frac{\mu^2(p^k)\theta^{\omega(p^k)}}{p^{ks}} \right) =\prod_p \left( \sum_{k=0}^1 \frac{\theta^{\omega(p^k)}}{p^{ks}} \right) = \prod_p (1 + \theta p^{-s}),
	$$ 
	which is absolutely convergent for $\Re (s)>1$. To extract the asymptotic behavior of the coefficients, we again apply the Selberg-Delange method (see \cite[Ch.\ II.5, Theorem 5.4]{tet}). Noting by the Euler product formula that $\zeta(s) = \prod_p \frac{1}{1 - p^{-s}}$ for $\Re(s)>1$, we factor the Dirichlet series as $F(s) = \zeta(s)^\theta G(s; \theta)$ for $\Re(s)>1$, where
	\begin{equation} \label{eq:G_euler}
		G(s; \theta) = \prod_p \big(1 + \theta p^{-s}\big)\big(1 - p^{-s}\big)^\theta =  \prod_p (1 + \theta p^{-s}) \left(\sum_{k=0}^\infty \binom{\theta}{k} (-p^{-s})^k\right);
	\end{equation}
	here the last step is due to the generalized binomial theorem noting $|p^{-s}|<1$. In order to express $G$ as a Dirichlet series, define now the multiplicative arithmetic function $g_\theta$ on $\N$ as $g_\theta(1)=1$ and for prime $p$ and $k \in \N$,
	\begin{equation}\label{eq:gdef}
		g_\theta(p^k) = (-1)^k \binom{\theta}{k} + \theta (-1)^{k-1} \binom{\theta}{k-1}.
	\end{equation}
	Note, for all prime $p$, we have $g_\theta(p)=0$. Also, for $k \ge 2$, we can bound
	\begin{align}\label{eq:gdefmod}
		|g_\theta(p^k)| &\le \left| \binom{\theta}{k} \right| + \theta \left| \binom{\theta}{k-1} \right| \nonumber \\
		&= \frac{|\theta| |\theta-1| \dots |\theta-k+1|}{k!} + \theta \frac{|\theta| |\theta-1| \dots |\theta-k+2|}{(k-1)!} \nonumber \\
		&\le \frac{\lceil\theta\rceil (\lceil\theta\rceil+1) \dots (\lceil\theta\rceil+k-1)}{k!} + \theta \frac{\lceil\theta\rceil (\lceil\theta\rceil+1) \dots (\lceil\theta\rceil+k-2)}{(k-1)!} \nonumber \\
		&= \binom{k + \lceil \theta \rceil - 1}{k} + \theta \binom{k + \lceil \theta \rceil - 2}{k-1} = \mathcal{O}\big(k^{\lceil \theta \rceil - 1}\big) + \theta \cdot \mathcal{O}\big(k^{\lceil \theta \rceil - 1}\big) = \mathcal{O}\big(k^{\lceil \theta \rceil -1}\big).
	\end{align}
	Let $s=\sigma+it$. By the polynomial growth of $g_\theta(p^k)$ in \eqref{eq:gdefmod} and that $g_\theta(p) = 0$ for all prime $p$, there exists an  absolute constant $C \in (0,\infty)$ depending only on $\theta$ such that
	\begin{equation}\label{eq:g1exp}
			\sum_{k= 1}^\infty |g_\theta(p^k) p^{-ks}| =  \sum_{k = 1}^\infty |g_\theta(p^k)| p^{-k\sigma} = \sum_{k= 2}^\infty |g_\theta(p^k)| p^{-k\sigma} \le  C p^{-2\sigma}.
	\end{equation}
	Because $\sum_p p^{-2\sigma} \le \zeta(2\sigma)<\infty$ for $\sigma>1/2$, we obtain
	\begin{equation}\label{eq:holo}
		\sum_p \sum_{k = 1}^\infty |g_\theta(p^k) p^{-ks}| = \sum_p \sum_{k = 1}^\infty |g_\theta(p^k)| p^{-k\sigma} <\infty, \quad \text{for $\sigma>1/2$}.
	\end{equation}
	Therefore by \cite[Theorem 15.4]{Rudin}, the product $ \prod_p \left(1 + \sum_{k=1}^\infty g_\theta(p^k) p^{-ks} \right)$ converges uniformly for $\Re(s)>1/2$, and we can write
	\begin{align}\label{eq:Gextend}
		G(s,\theta) &= \prod_p \big(1 + \theta p^{-s}\big)\big(1 - p^{-s}\big)^\theta \nonumber\\
		& =\prod_p (1 + \theta p^{-s}) \left(\sum_{k=0}^\infty \binom{\theta}{k} (-p^{-s})^k\right) = \prod_p \left(1 + \sum_{k=1}^\infty g_\theta(p^k) p^{-ks} \right) = \sum_{n=1}^\infty \frac{g_\theta(n)}{n^s},
	\end{align}
showing that $G$ is the Dirichlet series for the function $g_\theta$. Note that even though $F$ only converges absolutely for $\Re(s)>1$, the function $G(s,\theta)$ is holomorphic in the half-plane $\sigma > 1/2$. In particular, for $s=3/4$, we have
	\begin{equation}\label{eq:sumholo}
	\sum_{n = 1}^\infty \frac{|g_\theta(n)|}{n} (\log 3n)^{2} \ll \sum_{n=1}^\infty \frac{|g_\theta(n)|}{n^{3/4}} <\infty,
\end{equation}
verifying \cite[Ch.\ II.5, Eq.\ (5.33)]{tet} with $N=1$ and $z=\theta$. Thus, \cite[Ch.\ II.5, Theorem 5.4]{tet} guarantees an asymptotic expansion for $T(x)$ governed by the Taylor coefficients $\gamma_j(\theta) = Z^{(j)}(1, \theta), j \ge 0$ of the function $Z(s;\theta) = \big((s-1)\zeta(s)\big)^\theta/s$ around $s=1$ \cite[Ch.\ II.5, Eqs.\ (5.5) and (5.6)]{tet}. Applying \cite[Ch.\ II.5, Theorem 5.4 and Eq.\ (5.16)]{tet}  with $N=1$ and $A=\theta$, we thus obtain
	\begin{equation} \label{eq:T_expansion}
		T(x) = x (\log x)^{\theta-1} \left(\lambda_0(\theta) + \frac{\lambda_1(\theta)}{\log x} \right) + \mathcal{O}\big(x (\log x)^{\theta - 3}\big).
	\end{equation}
	To evaluate the coefficients $\lambda_k(\theta), k=0,1,$ we utilize again the evaluations $Z(1; \theta) = 1$ and $Z'(1; \theta) = \theta\gamma - 1$ from \eqref{eq:Z}. Furthermore, since $G(s; \theta)$ is holomorphic in the half-plane $\sigma > 1/2$, at $s=1$, the function is analytic, ensuring that $G(1; \theta)$ and $G'(1; \theta)$ are well-defined. Thus by \cite[Ch.\ II.5, Eq.\ (5.13)]{tet}, 
		$$
		\lambda_0(\theta) = \frac{1}{\Gamma(\theta)} G(1; \theta) Z(1; \theta) = \frac{G(1; \theta)}{\Gamma(\theta)},
		$$
		while
		$$
		\lambda_1(\theta) = \frac{1}{\Gamma(\theta - 1)} \Big[ G'(1; \theta)Z(1; \theta) + G(1; \theta)Z'(1; \theta) \Big] = \frac{(\theta-1)\big[G'(1; \theta) + G(1; \theta)(\theta\gamma - 1)\big]}{\Gamma(\theta)}.
		$$
	Note, since
	\begin{equation}\label{eq:g2exp}
					1 + \sum_{k=1}^\infty g_\theta(p^k) p^{-k} = \big(1 + \theta/p\big)\big(1 - 1/p\big)^\theta >0, 
	\end{equation}
	by \cite[Theorem 15.4]{Rudin}, we have that $G(1,\theta) =\prod_p \left(1 + \sum_{k=1}^\infty g_\theta(p^k) p^{-k} \right) > 0$. Therefore, plugging the expressions for $\lambda_0$ and $\lambda_1$ in \eqref{eq:T_expansion}, we obtain 
	\begin{equation} \label{eq:T_final}
		T(x) = \frac{x\,(\log x)^{\theta-1}}{\Gamma(\theta)} G(1; \theta) \Bigl(1 + \frac{\alpha_1}{\log x} + R(\log x)\Bigr),
	\end{equation}
	where the remainder satisfies $|R(u)| \le C u^{-2}, u >0$, for some constant $C \in (0,\infty)$ depending only on $\theta$ and $\alpha_1 = \frac{\lambda_1(\theta)}{\lambda_0(\theta)} = (\theta-1)\Bigl(\frac{G'(1; \theta)}{G(1; \theta)} + \theta\gamma - 1\Bigr) \in \R$ is finite.

	To transfer this expansion to $S(x)$, we argue exactly as in the case of $\mathbb{P}_n$. Using $T(t) = \mu^2(1) \theta^{\omega(1)}=1$ for $t \in [1,2)$, we note by Abel's summation formula that
	$$
	S(x) = \frac{T(x)}{x} + \int_1^x \frac{T(t)}{t^2} \dd t = \frac{T(x)}{x} + \int_2^x \frac{T(t)}{t^2} \dd t + \frac{1}{2}.
	$$
	Now arguing exactly as for \eqref{eq:S}, one obtains 
	\begin{equation}\label{eq:Ssq}
		S(x) = G(1; \theta) \frac{(\log x)^\theta}{\Gamma(\theta+1)}\Bigl(1+\frac{c_\theta^{\text{sq}}}{\log x}+\mathcal{O}\Bigl(\frac{\log \log x}{(\log x)^\eta}\Bigr)\Bigr),
	\end{equation}
	where $c_\theta^{\text{sq}} = \theta \Bigl(\frac{G'(1; \theta)}{G(1; \theta)} + \theta\gamma\Bigr)$ and $\eta = \min\{2, \theta\}$.
	\medskip
	
	\noindent\underline{\textit{Asymptotics of the normalising constant in \eqref{eq:Wlesq}:}} 
	Recall from \eqref{eq:ZnSq} that $Z_n^{\text{sq}} = \prod_{p \le x} \bigl(1 + \frac{\theta}{p}\bigr)$.
	We can express this via \eqref{eq:Znasymp} and \eqref{eq:G_euler} as
	$$
	Z_n^{\text{sq}} = \prod_{p\le x} \Bigl(1+\frac{\theta}{p}\Bigr)\Bigl(1-\frac1p\Bigr)^\theta \prod_{p \le x} \Bigl(1-\frac1p\Bigr)^{-\theta} = \frac{G(1; \theta) \bigl(e^\gamma\log x\bigr)^\theta\Bigl(1 + \mathcal{O}\big(e^{-b\sqrt{\log x}}\big)\Bigr)}{N(x)}
	$$
	for some constant $b \in (0,\infty)$, where we write
	$$
	N(x) := \prod_{p > x} \left(1 + \frac{\theta}{p}\right)\left(1 - \frac{1}{p}\right)^\theta.
	$$
	By \eqref{eq:g2exp} and \eqref{eq:g1exp}, we have
	$$
	\log N(x) = \sum_{p > x} \log \left(	1 + \sum_{k=1}^\infty g_\theta(p^k) p^{-k} \right) = \sum_{p > x} \log\Big(1 + \mathcal{O}\big(p^{-2}\big)\Big) = \mathcal{O}\left(\sum_{p > x} \frac{1}{p^2}\right) =  \mathcal{O}\left(\frac{1}{x}\right).
	$$
	Thus,
	$
	N(x) = \exp\big(\mathcal{O}(1/x)\big) = 1 + \mathcal{O}\big(1/x\big).
	$
	Noting that $(1 + \mathcal{O}(1/x))^{-1} = 1 + \mathcal{O}(1/x) = 1+ \mathcal{O}(e^{-\log x})$, and substituting this back in the expression for $Z_n^{\text{sq}}$ yields
	\begin{equation}\label{eq:Znsqasymp}
		Z_n^{\text{sq}} = G(1; \theta) \bigl(e^\gamma\log x\bigr)^\theta\Bigl(1 + \mathcal{O}\big(e^{-b\sqrt{\log x}}\big)\Bigr).
	\end{equation}
	
	\noindent\underline{\textit{Obtaining \eqref{eq:snsqlbint}:}} 
	Combining \eqref{eq:Wlesq}, \eqref{eq:Ssq} and \eqref{eq:Znsqasymp}, we can complete the proof arguing exactly as for $S_n$.
	In particular,
	$$
	\begin{aligned}
		\mathbb P(S_n^{\text{sq}}\le 1) = \frac{S(x)}{Z_n^{\text{sq}}} &= \frac{G(1; \theta)}{G(1; \theta)(e^\gamma\log x)^\theta}\Bigl(1 + \mathcal{O}\big(e^{-b\sqrt{\log x}}\big)\Bigr)
		\frac{(\log x)^\theta}{\Gamma(\theta+1)}\Bigl(1+\frac{c_\theta^{\text{sq}}}{\log x}+\mathcal{O}\Bigl(\frac{\log \log x}{(\log x)^\eta}\Bigr)\Bigr)\\[4pt]
		&= \frac{e^{-\gamma\theta}}{\Gamma(\theta+1)}
		\Bigl(1+\frac{c_\theta^{\text{sq}}}{\log x}+\mathcal{O}\Bigl(\frac{\log \log x}{(\log x)^\eta}\Bigr)\Bigr).
	\end{aligned}
	$$
	Since $\mathbb P(D_\theta\le 1) = \displaystyle e^{-\gamma\theta}/\Gamma(\theta+1)$, we obtain
	$$
	\mathbb P(S_n^{\text{sq}}\le 1) = \mathbb P(D_\theta\le 1) + \frac{C_\theta^{\text{sq}}}{\log p_n} + \mathcal{O}\Bigl(\frac{\log \log p_n}{(\log p_n)^\eta}\Bigr),
	$$
	where $C_\theta^{\text{sq}} = c_\theta^{\text{sq}} e^{-\gamma\theta}/\Gamma(\theta+1)$. 
	
	Finally, note from \eqref{eq:Gextend} that
	$$
	\frac{G'(s; \theta)}{G(s; \theta)} = \frac{\partial}{\partial s} \log G(s; \theta) = \sum_p \left[ \frac{-\theta p^{-s} \log p}{1 + \theta p^{-s}} + \frac{\theta p^{-s} \log p}{1 - p^{-s}} \right] = \sum_p \frac{\theta(\theta+1) p^{-2s} \log p}{(1 - p^{-s})(1 + \theta p^{-s})},
	$$ 
	yielding $\frac{G'(1; \theta)}{G(1; \theta)}>0$. Recalling that $c_\theta^{\text{sq}} = \theta \Bigl( \frac{G'(1; \theta)}{G(1; \theta)} + \theta\gamma\Bigr)$, it follows now that  $ c_\theta^{\text{sq}} \ge \theta^2 \gamma$, completing the proof.

\section{Proofs of Lemmas \ref{lem:palm_mean_bounds} and \ref{lem:laplace_Sn}} \label{app:cuelb}

\begin{proof}[Proof of Lemma \ref{lem:palm_mean_bounds}]
		For $k \in \{1,2\}$, let $\rho_k$ and $\rho_k^{(0)}$ denote the $k$-point correlation functions of the point process of eigenphases with respect to the Lebesgue measure on $(0, 2\pi]$, under the unconditional and reduced Palm measures, respectively. Since the eigenphase process is determinantal \cite{Dyson} and its reduced Palm measure remains determinantal \cite[Theorem 1.7]{ST2003}, all these correlation functions exist. We evaluate $\mu(s) = \int_0^{2\pi s} \rho_1^{(0)}(x) \, \dd x$ for $0<s <1$ by estimating $\rho_1^{(0)}(x)$. An interesting relation between the 1-point function and its Palm version (see e.g.\ \cite[Lemma 6.4]{ST2003}) is that
		\begin{equation}\label{eq:rho2}
				\rho_1(0) \rho_1^{(0)}(x) = \rho_2(0,x), \quad \text{for a.e.\ }x \in (0,2\pi).
		\end{equation}
		These functions are explicitly known; in particular (see e.g.\ \cite[Section 2.3, Page 13]{BA}), we have $\rho_1(0) = K(0) = n/2\pi$, where $K(x) = \frac{1}{2\pi} \frac{\sin(nx/2)}{\sin(x/2)}$, and
		$\rho_2(0, x) = K(0)^2 - K(x)^2$.
		From \eqref{eq:rho2}, we thus obtain
		\begin{equation}\label{eq:rho1palm}
		    \rho_1^{(0)}(x) = \frac{\rho_2(0,x)}{ \rho_1(0) } = \frac{K(0)^2 - K(x)^2}{K(0)} =  \frac{n}{2\pi} \left(1-\frac{K(x)^2}{K(0)^2}\right)\le \frac{n}{2\pi} \quad \text{for a.e. } x \in (0,2\pi).
		\end{equation}
		Integrating this expression immediately establishes the first bound in Lemma \ref{lem:palm_mean_bounds}, since
		$$
			\mu(s) = \int_0^{2\pi s} \rho_1^{(0)}(x)  \; \dd x \le n s, \quad s \in (0,1).
		$$

		When $s$ is small, the microscopic behaviour of $\mu(s)$ is different due to the repulsive effect of $0$ under the Palm measure.  By \cite[Section 2.3, Page 15]{BA}, there exists a universal constant $c \in (0,\infty)$ such that for $x \in (0,2\pi)$,
		$$
		|K(x)| \le c n, \quad \text{and} \quad |K''(x)| \le c n^3.
		$$
		Noting that $K'(0) = 0$ due to $K$ being even, Taylor's theorem yields for some $\xi \in (0, x)$ that $|K(0) - K(x)| = \frac{1}{2} |K''(\xi)|\, x^2 \le \frac{c}{2} n^3 x^2$ for $x \in (0,2\pi)$. Therefore, we can bound for $x \in (0,2\pi)$,
		$$
		0 \le \rho_1^{(0)}(x) =\frac{K(0)^2 - K(x)^2}{K(0)} = \frac{2\pi}{n} \Big( K(0) - K(x) \Big) \Big( K(0) + K(x) \Big) \le \frac{2\pi}{n} \left(\frac{c}{2} n^3x^2\right) (2c n) = C n^3 x^2,
		$$
		where $C = 2\pi c^2$. Thus integrating yields
		$$
			\mu(s) = \int_0^{2\pi s} \rho_1^{(0)}(x)  \; \dd x \le C' n^3 s^3, \quad s \in (0,1),
		$$
		where $C'= \frac{C(2\pi)^3}{3}$, establishing the second bound in \eqref{eq:meso-micro}, concluding the proof.
	\end{proof}

    \begin{proof}[Proof of Lemma \ref{lem:laplace_Sn}]
	The Laplace transform of the Dickman distribution has the exact representation $L_D := \E[e^{-D_\theta}] = \exp\left( - \theta \int_0^1 g(x) \dd x \right)$, where $g(x) := \frac{1 - e^{-x}}{x}$ (see e.g.\ \cite{Pi16b}). Note that $g(x)$ is positive and bounded on $(0,1]$.

	Given $\mathbf{W}$, the variables $X_k$ are conditionally independent, and each $X_k$ is a geometric random variable with mean $\lambda_k = \frac{\theta}{n W_{n,k}}$. Applying the standard geometric Laplace transform $\E[e^{-t X_k}] = (1 + \lambda_k(1 - e^{-t}))^{-1}$ evaluated at $t = W_{n,k}$ yields
	\begin{equation}\label{eq:Esnlb}
		\E[e^{-S_n} \mid \mathbf{W}] = \prod_{k=1}^{n-1} \left( 1 + \lambda_k(1 - e^{-W_{n,k}}) \right)^{-1} = \exp\left( - \sum_{k=1}^{n-1} \log\left( 1 + \frac{\theta}{n} g(W_{n,k}) \right) \right) = : e^{-Z_n}.
	\end{equation}
	Using the simple inequality $\log(1+x) \le x$ for $x \ge 0$, we have 
	\begin{equation}\label{eq:EZn}
			Z_n = \sum_{k=1}^{n-1} \log\left( 1 + \frac{\theta}{n} g(W_{n,k}) \right) \le \frac{\theta}{n} \sum_{k=1}^{n-1} g(W_{n,k}).
	\end{equation}
	
    By the translation invariance of the CUE eigenphases, we can upper bound $\E Z_n$ using the 1-point correlation function of the unnormalized eigenphases under the reduced Palm measure $\P_0$. Recall from \eqref{eq:rho1palm} in the proof of Lemma \ref{lem:palm_mean_bounds}  that
    $$
    \rho_1^{(0)}(x) = \frac{n}{2\pi}\left(1 - \frac{K(x)^2}{K(0)^2}\right) \quad \text{for a.e. } x \in (0,2\pi).
    $$
    Because the weights $W_{n,k} = \frac{1}{2\pi}(\theta_{k+1}-\theta_1)$ are normalized to the interval $(0,1)$, for any integrable function $f$ on $(0,1)$, we thus have 
    \begin{align*}
	\E\left[ \frac{1}{n} \sum_{k=1}^{n-1} f(W_{n,k}) \right] &= \frac{1}{n}\int_0^{2\pi} f\left(\frac{z}{2\pi}\right) \rho_1^{(0)}(z) \,\dd z \\
    &= \frac{2\pi}{n} \int_0^1 f(x) \rho_1^{(0)}(2\pi x) \, \dd x = \int_0^1 f(x) \,\dd x - \int_0^1 f(x) \bar K(x)\, \dd x,
	\end{align*}
    where $\bar K(x) := \left(\frac{K(2\pi x)}{K(0)}\right)^2 = \frac{(2\pi)^2}{n^2} K(2\pi x)^2$. Applying this identity to the bound for $Z_n$ in \eqref{eq:EZn} yields
	$$
	\E[Z_n] \le \theta \E\left[ \frac{1}{n} \sum_{k=1}^{n-1} g(W_{n,k}) \right] = \theta \int_0^1 g(x)\, \dd x - \theta \int_0^1 g(x) \bar K(x) \,\dd x.
	$$
	Because the function $g(x) = \frac{1-e^{-x}}{x}$ is decreasing and $g(1) = 1 - e^{-1} > 0$, by \eqref{eq:Esnlb} and Jensen's inequality, we therefore have
	\begin{align*}
	\E[e^{-S_n}] = \E [e^{-Z_n}] \ge e^{- \E Z_n} &\ge \exp\left( - \theta \int_0^1 g(x) \, \dd x \right) \exp\left( \theta \int_0^1 g(x) \bar K(x) \,\dd x \right) \\
    &\ge L_D \; \exp\left( \frac{C_n}{n} \right) \ge L_D\left(1+\frac{C_n}{n}\right),
	\end{align*}
	where $C_n = \theta(1-e^{-1}) \int_0^1 n\bar K(x) \, \dd x$.  Note that the scaled function $n \bar K(x) = \frac{1}{n}\left(\frac{\sin(n\pi x)}{\sin(\pi x)}\right)^2$ is the standard Fejér kernel, which integrates exactly to $1$ over $[0,1]$ (see e.g.\ \cite[Lemma 5.1]{FA}). Therefore,
	$$
	C_n = \theta (1 - e^{-1}) > 0,
	$$
	which concludes the proof.
\end{proof}

\section{Proofs of lower bound in Corollary \ref{lem:beta_shift} and Corollary \ref{cor:unified_weights}}\label{app:C}

	\begin{proof}[Proof of $d_{1,1}$ lower bound in Corollary \ref{lem:beta_shift}]
		Let $h(x) = e^{-x}$ for $x \ge 0$, so that $h \in \mathcal{H}_{1,1}$. Because $X_k \sim \text{Poi}(\theta/k), k \in [n]$ are mutually independent, we can evaluate
		\begin{align*}
			\E[h(S_n)] &= \prod_{k=1}^n \E\left[ \exp\left(-\frac{k}{n} X_k\right) \right] = \prod_{k=1}^n \exp\left( \frac{\theta}{k} (e^{-k/n} - 1) \right) = \exp\left( -  \theta \sum_{k=1}^n \frac{1}{n} g\left(\frac{k}{n}\right) \right),
		\end{align*}
		where we define $g(x) = \frac{1- e^{-x}}{x}$ for $x > 0$, with the continuous limit $g(0) = 1$. On the other hand, as noted in the proof of Lemma \ref{lem:laplace_Sn}, we have
		$$
		\E[h(D_\theta)] = \exp\left( - \theta \int_0^1 g(x) \, \dd x \right) = : L_D.
		$$
		Therefore, denoting $R_n = \int_0^1 g(x) \dd x - \frac{1}{n}\sum_{k=1}^n g(k/n)$, we can bound
		\begin{equation}\label{eq:rnbd}
			d_{1,1}(S_n, D_\theta) \ge \big|\E[h(S_n^0)] - \E[h(D_\theta)]\big| = L_D \Big| \exp(\theta R_n) - 1 \Big| \ge \theta L_D \, R_n,
		\end{equation}
		where for the final step, we used that $R_n \ge 0$, which follows by noting that $g'(x) = \frac{(x+1)e^{-x} - 1}{x^2}<0$ on $(0,1)$ so that $g$ is  decreasing on $[0,1]$. 
		Because $g'(x)$ is continuous and strictly negative on the compact set $[0,1]$, we have $c = \min_{x \in [0,1]} |g'(x)| > 0$. We can thus lower bound $R_n$ as
		$$
		R_n = \sum_{k=1}^n \int_{\frac{k-1}{n}}^{\frac{k}{n}} \left( g(x) - g(k/n) \right) \dd x \ge \sum_{k=1}^n \int_{\frac{k-1}{n}}^{\frac{k}{n}} c \left( \frac{k}{n} - x \right) \dd x = \sum_{k=1}^n c \left( \frac{1}{2n^2} \right) = \frac{c}{2n}.
		$$
		The result now follows from \eqref{eq:rnbd}.
	\end{proof}

\begin{proof}[Proof of Corollary \ref{cor:unified_weights}] The asserted Kolmogorov bound follows immediately from Theorem \ref{thm:unified_weights}, hence we only prove the $d_{1,1}$ bounds below.
	
	For Model A, let $M_k = \sum_{i=1}^k (Y_i - 1)$, $k \in [n]$ and decompose the sum as $S_n = S_n^0 + V_n$, where $S_n^0 = \sum_{k=1}^n \frac{k}{n}X_k$ and $V_n = \frac{1}{n} \sum_{k=1}^n M_k X_k$. 
	Because $\E[Y_i] = 1$, we conditionally have $\E[V_n \mid X_1, \hdots,X_n] = 0$. For any test function $h \in \mathcal{H}_{1,1}$, A Taylor series expansion $h(S_n)$ around $S_n^0$ thus yields
	\begin{equation}\label{eq:snsn0}
		\E [h(S_n) - h(S_n^0)] = \E[h'(S_n^0) V_n] + \E [E_n] = \E\big[h'(S_n^0) \E[V_n \mid X_1, \hdots,X_n]\big] + \E [E_n] = \E [E_n],
	\end{equation}
	where the remainder term satisfies $|E_n| \le \frac{1}{2} \|h''\|_\infty V_n^2 \le \frac{1}{2} V_n^2$. Rewriting $V_n$ as $V_n = \frac{1}{n}\sum_{i=1}^n (Y_i-1) \sum_{k=i}^n X_k$, and noting that $\sum_{k=i}^n X_i$ is Poisson distributed with mean $\theta \sum_{k=i}^n 1/k = \mathcal{O}\big(\log (n/i)\big)$, the independence of the increments yields
	$$
	2 \E [E_n] \le \E[V_n^2] = \frac{1}{n^2} \sum_{i=1}^n \Var(Y_i) \, \E\left[ \Big(\sum_{k=i}^n X_k\Big)^2 \right] = \frac{\sigma_{\max}^2}{n^2} \mathcal{O}\Big(\sum_{i=1}^n \log^2(n/i)\Big) = \mathcal{O}\left(1/n\right).
	$$
	The final step above follows by estimating the sum as
	\begin{multline}\label{eq:on}
		\sum_{i=2}^n \log^2(n/i) \le \int_1^n \log^2(n/x) \, \dd x = n \int_{1/n}^1 \log^2(1/u) \, \dd u = n \int_{1/n}^1 \log^2 u \, \dd u \\= n [u\log^2(u) - 2u\log(u) + 2u]_{1/n}^1 
		= n \left(2 - \frac{\log^2 n + 2\log n + 2}{n}\right),
	\end{multline}
	yielding $\sum_{i=1}^n \log^2(n/i) \le 2n$. Thus, from \eqref{eq:snsn0}, taking a supremum over all $h \in \mathcal{H}_{1,1}$, we obtain $d_{1,1} (S_n^0, S_n) = \mathcal{O}(1/n)$. Since $d_{1,1}(S_n^0, D_\theta) = \mathcal{O}(1/n)$ by Corollary \ref{lem:beta_shift}, the triangle inequality guarantees $d_{1,1}(S_n, D_\theta) = \mathcal{O}(1/n)$, yielding the conclusion for model A.
	
	For Model B, since $W_{n,k} = \frac{k + M_k}{S_n^Y}$, we have
	$$
	W_{n,k} - \frac{k}{n} = \frac{k + M_k}{S_n^Y} - \frac{k}{n} = \frac{n M_k - k M_n}{n \, S_n^Y} = \left(\frac{M_k - \frac{k}{n}M_n}{n}\right) \frac{n}{S_n^Y}.
	$$
	Writing $\frac{n}{S_n^Y} = 1 - \frac{M_n}{S_n^Y}$, we can thus decompose $S_n - S_n^0 = V_n = U_n + R_n$, where
	$$
	U_n = \frac{1}{n} \sum_{k=1}^n \left(M_k - \frac{k}{n} M_n \right) X_k \quad \text{and} \quad R_n = - \frac{M_n}{S_n^Y} U_n.
	$$
	Because $(M_k)_{k \in \N}$ is centered, we have $\E[U_n \mid X_1, \hdots,X_n] = 0$. The Taylor expansion therefore yields for $h \in \mathcal{H}_{1,1}$ that
	\begin{align}\label{eq:errr}
		\E[h(S_n) - h(S_n^0)] &= \E[h'(S_n^0) R_n] + \E[E_n] \le  \E [|R_n|] + \E [|E_n|],
	\end{align}
	where $|E_n| \le \frac{1}{2}\E[(U_n+R_n)^2] = \frac{1}{2}\E \left[ \left( nU_n/S_n^Y\right)^2 \right]$. 
	Exchanging the sums in $U_n$, we can rewrite it as
	$$U_n = \frac{1}{n} \sum_{i=1}^n (Y_i -1) \left( \sum_{k=i}^n X_k - \sum_{k=1}^n \frac{k}{n} X_k \right) =  \frac{1}{n} \sum_{i=1}^n (Y_i -1) \left( \sum_{k=i}^n X_k - S_n^0 \right).$$
	Thus by independence and using \eqref{eq:on}, we have
	\begin{multline}\label{eq:unbd}
		\E [U_n^2] = \frac{1}{n^2} \sum_{i=1}^n \Var(Y_i) \, \E\left[ \Big(\sum_{k=i}^n X_k -S_n^0 \Big)^2 \right] \le  \frac{2 \sigma_{\max}^2}{n^2} \E\left[ \Big(\sum_{k=i}^n X_k\Big)^2 + (S_n^0)^2\right]\\
		= \frac{2 \sigma_{\max}^2}{n^2} \left[ \mathcal{O}\Big(\sum_{i=1}^n \log^2(n/i)\Big) + \E [(S_n^0)^2]\right] = \frac{2 \sigma_{\max}^2}{n^2} \left[ \mathcal{O}(n) + \mathcal{O}(1)\right] = \mathcal{O}\left(1/n\right),
	\end{multline}
	where in the penultimate step we have used that
	$$
	\E [(S_n^0)^2] = \operatorname{Var}(S_n^0) + \theta^2 = \sum_{k=1}^n \frac{\theta k}{n^2}+ \theta^2  = \mathcal{O}(1).
	$$
	
	Recall the event $\mathcal{E} = \{S_n^Y > n/2\}$ introduced in the proof of Lemma \ref{lem:expected_error_unified}. Since $n/S_n^Y<2$ on $\mathcal{E}$, by the Cauchy-Schwarz inequality and \eqref{eq:unbd}, we have
	$$
	\E[|R_n| \ind_{\mathcal{E}}] \le 2 \, \E \left| U_n \frac{M_n}{n} \right| \le 2 \|U_n\|_2 \|M_n/n\|_2 = \mathcal{O}\left(1/n\right),
	$$
	where we used $\E[(M_n/n)^2] = \frac{1}{n^2}\sum_{i=1}^n \sigma_i^2 = \mathcal{O}(1/n)$. Similarly, recalling from \eqref{eq:errr} that $|E_n| \le (1/2) \E \left[ \left( nU_n/S_n^Y\right)^2 \right]$, using  \eqref{eq:unbd} we have 
	\begin{equation}\label{eq:Enbd}
		\E[|E_n| \ind_{\mathcal{E}}] \le \E \left[\frac{1}{2} (nU_n/S_n^Y)^2 \ind_{\mathcal{E}} \right] \le 2\E[U_n^2] = \mathcal{O}(1/n).
	\end{equation}
	
	On $\mathcal{E}^c$, we have $\P(\mathcal{E}^c) \le \frac{4}{n^2}\sum_{i=1}^n \sigma_i^2 = \mathcal{O}(1/n)$ from \eqref{eq:tailbd}. Since $\|h'\|_\infty \le 1$, note from the definition of $E_n$ in \eqref{eq:errr} that we can bound $|E_n|$ as
	$$
	|E_n| = \big|h(S_n) - h(S_n^0) - h'(S_n^0)V_n\big| \le |h(S_n) - h(S_n^0)| + \|h'\|_\infty |V_n| \le 2|V_n|.
	$$
	Moreoever, noting that  $W_{n,k} \le 1$ in Model B for $k \in [n]$, we have $|V_n| = |S_n - S_n^0| \le 2\sum_{k=1}^n X_k$ almost surely. Thus,
	\begin{equation}\label{eq:evbd}
		\E[|E_n| \ind_{\mathcal{E}^c}] \le 2 \E[|V_n| \ind_{\mathcal{E}^c}] \le 4\E\bigg[\sum_{k=1}^n X_k\bigg] \P(\mathcal{E}^c) = \mathcal{O}(\log n) \cdot \mathcal{O}\left(\frac{1}{n}\right) = \mathcal{O}\left(\frac{\log n}{n}\right).
	\end{equation}
	This combined with \eqref{eq:Enbd} yields $\E [|E_n|] = \mathcal{O}(\log n/n)$. Since $R_n = V_n - U_n$, we also have 
	$$
	\E|R_n| \le \E[|R_n| \ind_{\mathcal{E}}] + \E[|V_n| \ind_{\mathcal{E}^c}] + \E[|U_n| \ind_{\mathcal{E}^c}].
	$$ 
	Noting from \eqref{eq:unbd} and \eqref{eq:tailbd} that $\E[|U_n| \ind_{\mathcal{E}^c}] \le \sqrt{\E [U_n^2] \P(\mathcal{E}^c)} = \mathcal{O}(1/n)$, we combine the bounds via \eqref{eq:errr} and take a supremum over all $h \in \mathcal{H}_{1,1}$ to obtain $d_{1,1}(S_n,S_n^0) = \mathcal{O}(\log n/n)$. Recalling that $d_{1,1}(S_n^0, D_\theta) = \mathcal{O}(1/n)$ by Corollary \ref{lem:beta_shift}, this yields the result for Model B by the triangle inequality.

	Note that if we have $\max_{i \in [n]} \E [Y_i^{2+\epsilon}]<\infty$ for some $\epsilon>0$, then one can improve the tail bound in \eqref{eq:tailbd} using the Marcinkiewicz–Zygmund inequality as
	$$
	\P(\mathcal{E}^c) \le \P(|S_n^Y - n| \ge n/2) \le \frac{\E\Big[ \Big| \sum_{i=1}^n (Y_i - 1) \Big|^{2+\epsilon} \Big]}{n^{2+\epsilon}} \le \frac{\mathcal{O}(n^{1 + \epsilon/2})}{n^{2+\epsilon}} = \mathcal{O}\left( \frac{1}{n^{1 + \epsilon/2}} \right).
	$$
	This immediately improves \eqref{eq:evbd} to 
	$$
	\E[|E_n| \ind_{\mathcal{E}^c}] \le 2 \E[|V_n| \ind_{\mathcal{E}^c}] \le 4\E\bigg[\sum_{k=1}^n X_k\bigg] \P(\mathcal{E}^c) = \mathcal{O}(\log n) \cdot \mathcal{O}\left( \frac{1}{n^{1 + \epsilon/2}} \right) = \mathcal{O}\left(\frac{1}{n}\right).
	$$
	allowing one to obtain a bound of the order $1/n$ on $d_{1,1}(S_n, D_\theta)$ in Model B, concluding the proof.
\end{proof}

	\end{document}